\pgfplotsset{compat=newest}
\definecolor{webgreen}{rgb}{0,.5,0}
\definecolor{webbrown}{rgb}{.6,0,0}
\pgfplotsset{compat=newest}
\definecolor{c1}{gray}{0.8}
\newcommand{\sC}{{\mathcal C}}
\newcommand{\sE}{{\mathcal E}}
\newcommand{\sN}{{\mathcal N}}
\theoremstyle{plain}
\numberwithin{equation}{section}
\newtheorem{thm}{Theorem}[section]
\newtheorem{conj}[thm]{Conjecture}
\newtheorem{problem}[thm]{Open Problem}
 \newcommand{\seqnum}[1]{\href{http://oeis.org/#1}{\underline{#1}}}  
\newcommand{\eqn}[1]{(\ref{#1})} 
\newcommand{\beql}[1]{\begin{equation}\label{#1}} 
\newcommand{\eeq}{\end{equation}}  
\begin{document}


\setcounter{page}{1}


\begin{center}
{\large\bf Graphical Enumeration and Stained Glass Windows, 1: Rectangular Grids} \\
\vspace*{+.2in}

Lars Blomberg \\
\"{A}renprisv\"{a}gen 111,
SE-58564 Linghem, SWEDEN \\
Email:  \href{mailto:larsl.blomberg@comhem.se}{\tt larsl.blomberg@comhem.se}

\vspace*{+.1in}

Scott R. Shannon \\
P.O. Box 2260, Rowville, Victoria 3178, AUSTRALIA \\
Email: \href{mailto:scott\_r\_shannon@hotmail.com}{\tt scott\_r\_shannon@hotmail.com}

\vspace*{+.1in}

N. J. A. Sloane\footnote{To whom correspondence should be addressed.} \\
The OEIS Foundation Inc., 
11 South Adelaide Ave.,
Highland Park, NJ 08904, USA \\
Email:  \href{mailto:njasloane@gmail.com}{\tt njasloane@gmail.com}
\vspace*{+.1in}

DEDICATED TO THE MEMORY OF RONALD LEWIS GRAHAM (1935--2020)


\vspace*{+.1in}

\begin{abstract}
A survey of enumeration problems arising from the study of graphs formed
when the edges of a polygon are marked with evenly spaced points
 and every pair of  points
is joined by a line. A few of these problems have
been solved, a classical example being the 
the graph $K_n$ formed when all pairs of vertices
of a regular $n$-gon are joined by chords, which was analyzed
by Poonen and Rubinstein in 1998.
Most of these problems are unsolved, however, and this
two-part article provides data from a number of such problems as well as
colored illustrations, which are often reminiscent of stained glass windows.
The polygons considered include rectangles,  hollow rectangles (or frames), 
 triangles, pentagons, pentagrams, crosses, etc., as well
as figures formed by drawing semicircles  joining equally-spaced points on a line.
 
This first part discusses rectangular grids. The $1 \times n$ grids, or equally the
graphs $K_{n+1,n+1}$, were studied by Legendre and Griffiths, and here we 
investigate the number of cells with a given number of edges and the number of nodes with a 
given degree. We have only partial results for the $m \times n$ rectangles, including 
upper bounds on the numbers of nodes and cells.
 \end{abstract}
\end{center}



\section{Introduction.}\label{Sec1}
In 1998 Poonen and Rubinstein \cite{PoRu98} (see also \cite{Som98}) solved the problem of finding 
the numbers of intersection points and cells in a regular drawing of the complete
graph $K_n$, and in 2009-2010 Legendre~\cite{Leg09} and Griffiths~\cite{Gri10}
solved a similar problem for the complete
bipartite graph $K_{n,n}$.
Stated another way, \cite{PoRu98} analyzes the graph formed by joining
all pairs of vertices of a regular $n$-gon, while~\cite{Leg09, Gri10} 
analyze the graph formed by taking a row of $n-1$ identical squares and 
drawing lines between every pair of boundary nodes.

One motivation for the present work was to see if these investigations could be
extended to graphs formed from other structures, such as an $m \times n$ array of identical squares.
Take a rectangle of size $m \times n$, and place $m-1$ equally spaced points on 
the two vertical sides,
and $n-1$ equally spaced points on the two horizontal sides, and then draw lines between every pair of the $2(m+n)$ boundary 
points. The resulting planar graph, which we denote by $BC(m,n)$, is the main subject of Part~1 of this paper.

Although we have not been very successful in analyzing these graphs, we have collected a great deal of data,
which has been entered into various sequences in the
\emph{On-Line Encyclopedia of Integer Sequences}~\cite{OEIS}.

In Part 2~\cite{Rose2}, we continue this work by considering other structures such as hollow squares
(or ``frames''), triangles, pentagons, hexagons, pentagrams, etc., as well  
as figures formed by drawing semicircles  joining equally-spaced points on an interval.
The last-mentioned figures are reminiscent of juggling patterns,\footnote{See entry 
\seqnum{A290447} in~\cite{OEIS}}
as studied by Ron Graham in ~\cite{RLG1} and other papers, and we regret that now it is too late to ask him for help for finding  a formula for those numbers.

We were also motivated  by memories of stained glass
 windows seen in the great Gothic cathedrals of Northern Europe.
 In 2019 we made a colored drawing of $K_{23}$ (Fig.~\ref{FigK23}) 
 which was reminiscent of a rose window, and we were curious to see what colored versions of 
 other graphs would look like. Informally, our philosophy has been, if we can't solve it, make art.
 We make no great claims for artistic merit, but the images are certainly colorful.

 \begin{figure}[!ht]
\centerline{\includegraphics[angle=0, width=6.4in]{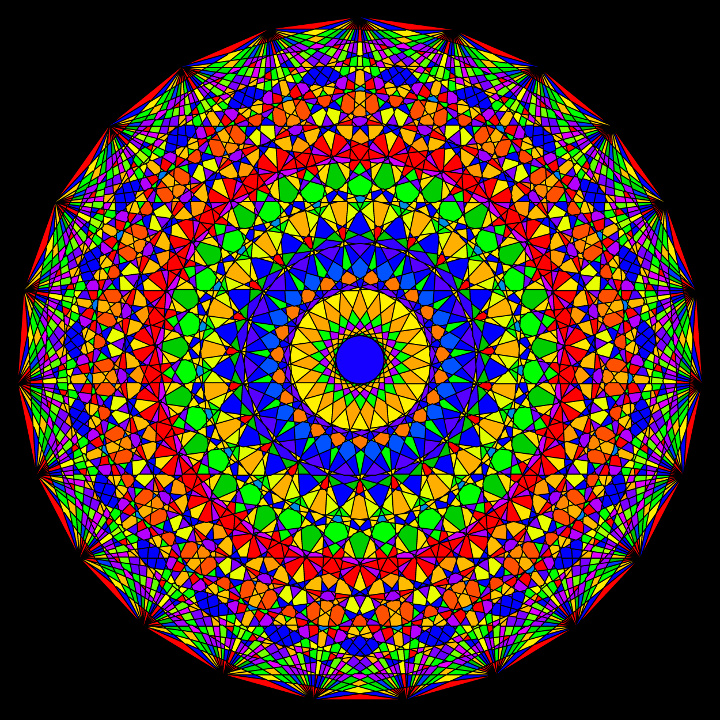}}
\caption{Colored drawing of complete graph $K_{23}$ (see 
  \textsection\ref{SubSecRan} for coloring algorithm).
  Entry \seqnum{A007678} in~\cite{OEIS} has many similar images (which are also of higher quality). }\label{FigK23}
\end{figure}

Space limitations have restricted the number and quality of the images
that we could include here. The corresponding 
entries in \cite{OEIS} (\seqnum{A007678}\footnote{Six-digit numbers prefixed by A refer to entries in the On-Line Encyclopedia of Integer Sequences \cite{OEIS}.}
 in the case of Fig.~\ref{FigK23})
contain a large number of other images, with better resolution. We are especially fond of the three images
of $K_{41}$ in \seqnum{A007678}, and in \seqnum{A331452}  the reader should not miss 
the images labeled $T(10,2)$, $T(6,6)$, $T(7,7)$, which are drawings of the graphs
$BC(10,2)$, $BC(6,6)$, and $BC(7,7)$ discussed below.

This paper is arranged as follows. The last section of this Introduction establishes the notation we will use,
especially the terms \emph{nodes}, \emph{chords}, and \emph{cells}, and
provides some examples. Section~\ref{SecBC1n} deals with the graphs $BC(1,n)$ (or equivalently $BC(n,1)$),
where the underlying polygon is a rectangle of size $1 \times n$ (or $n \times 1$).
Theorem~\ref{ThVRBC1}  gives Legendre and Griffiths's enumeration of the nodes
and cells in $BC(1,n)$.  In 2019, Max Alekseyev (personal communication; see also \seqnum{A306302})
 pointed out that that the Legendre-Griffiths results are essentially the same as results that
he and his coauthors obtained in connection with the enumeration of two-dimensional  threshold functions
\cite{Alex10, ABZ15}. 
The family of isosceles triangle graphs $IT(n)$  (Section~\ref{SecIT})  provides a bridge
between the graphs $BC(1,n)$ and two-dimensional threshold function.
Alekseyev also mentioned that their work implies a result 
that was apparently overlooked in the Legendre and Griffiths papers: the cells in $BC(1,n)$ are always triangles or quadrilaterals. See Theorem~\ref{ThABZ}.
The proof of this fact in \cite{ABZ15} depends on a theorem
 about teaching sets for threshold function~\cite{SZ98, Zol01}.
 We feel that such a elementary property should have a purely geometrical proof, although no such proof is presently known.
 We state this question as Open Problem~\ref{OP1}.

One possible attack on this problem is to study the distribution of
cells in each of the $n$ squares of $BC(1,n)$--- see
Tables~\ref{TabWangt}, \ref{TabWangq}, \ref{TabWangc}.
The  \emph{gfun} Maple program \cite{GFUN} suggests a form for the generating
functions of the columns of these tables, but so far this is only a conjecture.

We next consider the number of interior nodes in $BC(1,n)$ where $c$ chords meet 
(Table~\ref{TabWangi}). The number of simple nodes, where just two chords cross, is of the greatest 
interest, since these seem to dominate. But even though we have calculated $500$ terms
of this sequence (Table~\ref{TabA334701}, \seqnum{A334701}) we have been unable to find a formula 
or recurrence (Open Problem~\ref{OPSIP}).
There have been several similar occasions  during this project when we have regretted not having an oracle that 
would take a few hundred terms of a simple, well-defined sequence and 
suggest some kind of formula.\footnote{The oracle  might compare the sequence with a
shifted version of each of the  $300000$
entries in \cite{OEIS}, and ask
Bruno Salvy and Paul Zimmermann's program \emph{gfun},
or Harm Derksen's program \emph{guesss}, or 
Christian Kratthentaler's program \emph{Rate},
or one of the other programs used by \emph{Superseeker} \cite{Slo10} 
 if there is a formula for the difference.}

The graph $BC(1,n)$ starts from a $1 \times n$ rectangle. 
If we start from an $m \times n$ rectangle, with $m$ and $n > 1$, there are actually three natural ways
to define a graph, which we will denote by
$BC(m,n)$, $AC(m,n)$, and $LC(m,n)$. These are the subjects of Sections \ref{SecBCmn},
\ref{SecAC}, and \ref{SecLC}, respectively. For these families we have plenty of data 
and pictures, but not many results.  
In Section~\ref{SecBCmn} we conjecture that the
cells in $BC(2,n)$ have at most eight sides, and for $n \ge 19$, at most six sides
(Conjecture~\ref{ConjBC2cell}).
Our main result concerning $BC(m,n)$  is
an upper bound on the numbers of nodes and cells in $BC(m,n)$,
presented in \textsection\ref{SecBCGP}, which appears to be reasonably close to the true values. 

 The final section (\textsection\ref{SecColor}) describes the algorithms that were used to color the graphs.

\vspace{.2in}


 \begin{figure}[!ht] 
\begin{center}
\begin{tikzpicture}[scale=1.5]
\draw (0,0) -- (2,0);
\draw (0,0) -- (0,2);
\draw (2,2) -- (2,0);
\draw (2,2) -- (0,2);
\draw (0,0) -- (2,2);
\draw (0,2) -- (2,0);
code
\end{tikzpicture}
\caption{$BC(1,1)$: a $1$-reticulated square with four cells.}
\label{FigBC11bw}
\end{center}
\end{figure}
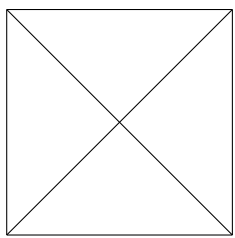

 \vspace*{+.2in}
 \noindent{\bf Terminology.}
The graphs that we study are usually constructed by 
starting with a \emph{polygon} $P$ drawn in the plane, having \emph{sides} and  \emph{vertices}.
We then subdivide the sides by dividing them into some number
of equal parts. To divide a side into $k$ equal parts, we insert $k-1$ 
equally spaced \emph{nodes} along that side.  
The side then contains the  two end-vertices and the $k-1$ internal nodes.
We say that the side has been  \emph{$k$-reticulated}. 
A \emph{chord} in $P$  is a finite line-segment joining a pair of vertices or nodes.
 A chord is undirected, does not extend outside $P$, and has specified end points.

Given a polygon $P$, we construct a planar graph $G$ by drawing 
chords according to some specified rule. 
For example, we might join every vertex or node to all the vertices or nodes on all the other sides.
Points where chords intersect are also nodes of the graph $G$.
The graph thus formed has \emph{nodes} (referring to the vertices and edge-nodes of
the polygon and also any interior intersection points),
\emph{edges} (which are line segments between pairs of nodes), 
and \emph{cells} (the connected regions defined by the edges). 
In graph theory the cells are  sometimes called \emph{faces} or \emph{chambers}, but we will not use those terms.
Our graphs are also \emph{maps} in the sense of Tutte~\cite{Tut63, Tut68}, but we will refer to
them simply as planar graphs.

For a connected planar graph, Euler's formula states that the numbers of nodes, edges, 
and cells are related by
\beql{Eq1}
|\, \mbox{nodes}\,| ~-~ |\,\mbox{edges}\,|  ~+~ |\,\mbox{cells}\,| ~=~ 1.
\eeq

Examples: Figure~\ref{FigBC11bw} shows the graph
$BC(1,1)$ (defined in \textsection\ref{SecBC1n}),
which has $5$ nodes, $8$ edges, and $4$ cells. The polygon is a     
square and there are two chords which meet at the central node. 
Figure~\ref{FigBC22bw} shows the graph $BC(2,2)$, constructed from a square in
which each side has been $2$-reticulated. There are $14$ chords.
The graph has $37$ nodes,  $92$ edges, and $56$ cells.\footnote{ $BC(3,3)$ is
shown in Fig.~\ref{FigBC33} in ~\textsection\ref{SecBCmn} and has $340$ cells. 
There is no known formula  for the number of cells in $BC(n,n)$, even though we have $52$ terms.
The sequence begins $4, 56, 340,  1120,3264, \ldots$ (\seqnum{A255011}).} Figure~\ref{FigBC22} shows a colored version of
the graph. The principles used to color these graphs are discussed in Section~\ref{SecColor}.
For any undefined terms from graph theory see \cite{Bela, Harary}. 


\begin{figure}[!ht]
        \begin{minipage}[b]{0.45\linewidth}
                \begin{center}
                        \begin{tikzpicture}[scale=2.45]
                        \draw (0,0) -- (2,0);
                        \draw (0,0) -- (2,1);
                        \draw (0,0) -- (2,2);
                        \draw (0,0) -- (1,2);
                        \draw (0,0) -- (0,2);
                        \draw (0,1) -- (1,0);
                        \draw (0,1) -- (2,0);
                        \draw (0,1) -- (2,1);
                        \draw (0,1) -- (2,2);
                        \draw (0,1) -- (1,2);
                        \draw (0,2) -- (2,0);
                        \draw (0,2) -- (1,0);
                        \draw (0,2) -- (2,1);
                        \draw (0,2) -- (2,2);
                        \draw (1,0) -- (1,2);
                        \draw (1,0) -- (2,2);
                        \draw (1,0) -- (2,1);
                        \draw (1,2) -- (2,1);
                        \draw (1,2) -- (2,0);
                        \draw (2,0) -- (2,2);
                        \end{tikzpicture}
                        \caption{$BC(2,2)$: a $2$-reticulated square with 56 cells.}
                        \label{FigBC22bw}
                \end{center}
                %
        \end{minipage}
        \hspace{0.4cm}
        \begin{minipage}[b]{0.45\linewidth}
\begin{tikzpicture}[scale=0.243,black,semithick,line join=round]

\definecolor{c0}{RGB}{179,125,0}
\definecolor{c1}{RGB}{153,97,0}
\definecolor{c2}{RGB}{203,156,0}
\definecolor{c3}{RGB}{102,0,0}
\definecolor{c4}{RGB}{225,188,0}
\definecolor{c5}{RGB}{245,223,0}

\draw [fill=c0] (0.000,0.000) -- (10.000,0.000) -- (6.667,3.333) -- (6.667,3.333) -- cycle;
\draw [fill=c1] (0.000,0.000) -- (6.667,3.333) -- (5.000,5.000) -- (5.000,5.000) -- cycle;
\draw [fill=c1] (0.000,0.000) -- (5.000,5.000) -- (3.333,6.667) -- (3.333,6.667) -- cycle;
\draw [fill=c0] (0.000,0.000) -- (3.333,6.667) -- (0.000,10.000) -- (0.000,10.000) -- cycle;
\draw [fill=c1] (0.000,10.000) -- (3.333,6.667) -- (4.000,8.000) -- (4.000,8.000) -- cycle;
\draw [fill=c2] (0.000,10.000) -- (4.000,8.000) -- (5.000,10.000) -- (5.000,10.000) -- cycle;
\draw [fill=c2] (0.000,10.000) -- (5.000,10.000) -- (4.000,12.000) -- (4.000,12.000) -- cycle;
\draw [fill=c1] (0.000,10.000) -- (4.000,12.000) -- (3.333,13.333) -- (3.333,13.333) -- cycle;
\draw [fill=c0] (0.000,10.000) -- (3.333,13.333) -- (0.000,20.000) -- (0.000,20.000) -- cycle;
\draw [fill=c1] (0.000,20.000) -- (3.333,13.333) -- (5.000,15.000) -- (5.000,15.000) -- cycle;
\draw [fill=c1] (0.000,20.000) -- (5.000,15.000) -- (6.667,16.667) -- (6.667,16.667) -- cycle;
\draw [fill=c0] (0.000,20.000) -- (6.667,16.667) -- (10.000,20.000) -- (10.000,20.000) -- cycle;
\draw [fill=c3] (3.333,6.667) -- (5.000,5.000) -- (6.667,6.667) -- (4.000,8.000) -- (4.000,8.000) -- cycle;
\draw [fill=c3] (3.333,13.333) -- (4.000,12.000) -- (6.667,13.333) -- (5.000,15.000) -- (5.000,15.000) -- cycle;
\draw [fill=c4] (4.000,8.000) -- (6.667,6.667) -- (5.000,10.000) -- (5.000,10.000) -- cycle;
\draw [fill=c4] (4.000,12.000) -- (5.000,10.000) -- (6.667,13.333) -- (6.667,13.333) -- cycle;
\draw [fill=c3] (5.000,5.000) -- (6.667,3.333) -- (8.000,4.000) -- (6.667,6.667) -- (6.667,6.667) -- cycle;
\draw [fill=c5] (5.000,10.000) -- (10.000,10.000) -- (6.667,13.333) -- (6.667,13.333) -- cycle;
\draw [fill=c5] (5.000,10.000) -- (6.667,6.667) -- (10.000,10.000) -- (10.000,10.000) -- cycle;
\draw [fill=c3] (5.000,15.000) -- (6.667,13.333) -- (8.000,16.000) -- (6.667,16.667) -- (6.667,16.667) -- cycle;
\draw [fill=c1] (6.667,3.333) -- (10.000,0.000) -- (8.000,4.000) -- (8.000,4.000) -- cycle;
\draw [fill=c4] (6.667,6.667) -- (8.000,4.000) -- (10.000,5.000) -- (10.000,5.000) -- cycle;
\draw [fill=c5] (6.667,6.667) -- (10.000,5.000) -- (10.000,10.000) -- (10.000,10.000) -- cycle;
\draw [fill=c4] (6.667,13.333) -- (10.000,15.000) -- (8.000,16.000) -- (8.000,16.000) -- cycle;
\draw [fill=c5] (6.667,13.333) -- (10.000,10.000) -- (10.000,15.000) -- (10.000,15.000) -- cycle;
\draw [fill=c1] (6.667,16.667) -- (8.000,16.000) -- (10.000,20.000) -- (10.000,20.000) -- cycle;
\draw [fill=c2] (8.000,4.000) -- (10.000,0.000) -- (10.000,5.000) -- (10.000,5.000) -- cycle;
\draw [fill=c2] (8.000,16.000) -- (10.000,15.000) -- (10.000,20.000) -- (10.000,20.000) -- cycle;
\draw [fill=c0] (10.000,0.000) -- (20.000,0.000) -- (13.333,3.333) -- (13.333,3.333) -- cycle;
\draw [fill=c1] (10.000,0.000) -- (13.333,3.333) -- (12.000,4.000) -- (12.000,4.000) -- cycle;
\draw [fill=c2] (10.000,0.000) -- (12.000,4.000) -- (10.000,5.000) -- (10.000,5.000) -- cycle;
\draw [fill=c5] (10.000,5.000) -- (13.333,6.667) -- (10.000,10.000) -- (10.000,10.000) -- cycle;
\draw [fill=c4] (10.000,5.000) -- (12.000,4.000) -- (13.333,6.667) -- (13.333,6.667) -- cycle;
\draw [fill=c5] (10.000,10.000) -- (15.000,10.000) -- (13.333,13.333) -- (13.333,13.333) -- cycle;
\draw [fill=c5] (10.000,10.000) -- (13.333,13.333) -- (10.000,15.000) -- (10.000,15.000) -- cycle;
\draw [fill=c5] (10.000,10.000) -- (13.333,6.667) -- (15.000,10.000) -- (15.000,10.000) -- cycle;
\draw [fill=c2] (10.000,15.000) -- (12.000,16.000) -- (10.000,20.000) -- (10.000,20.000) -- cycle;
\draw [fill=c4] (10.000,15.000) -- (13.333,13.333) -- (12.000,16.000) -- (12.000,16.000) -- cycle;
\draw [fill=c1] (10.000,20.000) -- (12.000,16.000) -- (13.333,16.667) -- (13.333,16.667) -- cycle;
\draw [fill=c0] (10.000,20.000) -- (13.333,16.667) -- (20.000,20.000) -- (20.000,20.000) -- cycle;
\draw [fill=c3] (12.000,4.000) -- (13.333,3.333) -- (15.000,5.000) -- (13.333,6.667) -- (13.333,6.667) -- cycle;
\draw [fill=c3] (12.000,16.000) -- (13.333,13.333) -- (15.000,15.000) -- (13.333,16.667) -- (13.333,16.667) -- cycle;

\draw [fill=c1] (13.333,3.333) -- (20.000,0.000) -- (15.000,5.000) -- (15.000,5.000) -- cycle;
\draw [fill=c4] (13.333,6.667) -- (16.000,8.000) -- (15.000,10.000) -- (15.000,10.000) -- cycle;
\draw [fill=c3] (13.333,6.667) -- (15.000,5.000) -- (16.667,6.667) -- (16.000,8.000) -- (16.000,8.000) -- cycle;
\draw [fill=c4] (13.333,13.333) -- (15.000,10.000) -- (16.000,12.000) -- (16.000,12.000) -- cycle;
\draw [fill=c3] (13.333,13.333) -- (16.000,12.000) -- (16.667,13.333) -- (15.000,15.000) -- (15.000,15.000) -- cycle;
\draw [fill=c1] (13.333,16.667) -- (15.000,15.000) -- (20.000,20.000) -- (20.000,20.000) -- cycle;
\draw [fill=c1] (15.000,5.000) -- (20.000,0.000) -- (16.667,6.667) -- (16.667,6.667) -- cycle;
\draw [fill=c2] (15.000,10.000) -- (20.000,10.000) -- (16.000,12.000) -- (16.000,12.000) -- cycle;
\draw [fill=c2] (15.000,10.000) -- (16.000,8.000) -- (20.000,10.000) -- (20.000,10.000) -- cycle;
\draw [fill=c1] (15.000,15.000) -- (16.667,13.333) -- (20.000,20.000) -- (20.000,20.000) -- cycle;
\draw [fill=c1] (16.000,8.000) -- (16.667,6.667) -- (20.000,10.000) -- (20.000,10.000) -- cycle;
\draw [fill=c1] (16.000,12.000) -- (20.000,10.000) -- (16.667,13.333) -- (16.667,13.333) -- cycle;
\draw [fill=c0] (16.667,6.667) -- (20.000,0.000) -- (20.000,10.000) -- (20.000,10.000) -- cycle;
\draw [fill=c0] (16.667,13.333) -- (20.000,10.000) -- (20.000,20.000) -- (20.000,20.000) -- cycle;
\draw [thick] (0.000,0.000) -- (20.000,0.000) -- (20.000,20.000) -- (0.000,20.000) -- (0.000,20.000) -- cycle;
\end{tikzpicture}
\caption{ The same $BC(2,2)$ drawn  with colored cells. See \textsection\ref{SubSecYR} for coloring scheme.}
                \label{FigBC22}
        \end{minipage}
\end{figure}


\section{\texorpdfstring{$BC(1,n)$: $1 \times n$}{BC(1,n): 1 X n} rectangular windows}\label{SecBC1n}
The graph $BC(1,n)$ ($n \ge 1$) is constructed by taking a $1 \times n$ rectangle, 
inserting $n-1$ equally spaced nodes
along the top and bottom sides, and then joining every pair of vertices or nodes by chords.
Figures~\ref{FigBC11bw}, \ref{FigBC12}, and \ref{FigBC13}
show $BC(1,n)$ for $n = 1, 2$, and $3$.

\begin{figure}[!ht]
        \begin{minipage}[b]{0.40\linewidth}

                \begin{tikzpicture}[scale=0.24,black,semithick,line join=round]

                \definecolor{c0}{RGB}{185,133,0}
                \definecolor{c1}{RGB}{153,97,0}
                \definecolor{c2}{RGB}{215,172,0}
                \definecolor{c3}{RGB}{102,0,0}
                \definecolor{c4}{RGB}{240,214,0}

                \draw [fill=c0] (0.000,0.000) -- (10.000,0.000) -- (6.667,3.333) -- (6.667,3.333) -- cycle;
                \draw [fill=c1] (0.000,0.000) -- (6.667,3.333) -- (5.000,5.000) -- (5.000,5.000) -- cycle;
                \draw [fill=c2] (0.000,0.000) -- (5.000,5.000) -- (0.000,10.000) -- (0.000,10.000) -- cycle;
                \draw [fill=c1] (0.000,10.000) -- (5.000,5.000) -- (6.667,6.667) -- (6.667,6.667) -- cycle;
                \draw [fill=c0] (0.000,10.000) -- (6.667,6.667) -- (10.000,10.000) -- (10.000,10.000) -- cycle;
                \draw [fill=c3] (5.000,5.000) -- (6.667,3.333) -- (10.000,5.000) -- (6.667,6.667) -- (6.667,6.667) -- cycle;
                \draw [fill=c4] (6.667,3.333) -- (10.000,0.000) -- (10.000,5.000) -- (10.000,5.000) -- cycle;
                \draw [fill=c4] (6.667,6.667) -- (10.000,5.000) -- (10.000,10.000) -- (10.000,10.000) -- cycle;
                \draw [fill=c0] (10.000,0.000) -- (20.000,0.000) -- (13.333,3.333) -- (13.333,3.333) -- cycle;
                \draw [fill=c4] (10.000,0.000) -- (13.333,3.333) -- (10.000,5.000) -- (10.000,5.000) -- cycle;
                \draw [fill=c4] (10.000,5.000) -- (13.333,6.667) -- (10.000,10.000) -- (10.000,10.000) -- cycle;
                \draw [fill=c3] (10.000,5.000) -- (13.333,3.333) -- (15.000,5.000) -- (13.333,6.667) -- (13.333,6.667) -- cycle;
                \draw [fill=c0] (10.000,10.000) -- (13.333,6.667) -- (20.000,10.000) -- (20.000,10.000) -- cycle;
                \draw [fill=c1] (13.333,3.333) -- (20.000,0.000) -- (15.000,5.000) -- (15.000,5.000) -- cycle;
                \draw [fill=c1] (13.333,6.667) -- (15.000,5.000) -- (20.000,10.000) -- (20.000,10.000) -- cycle;
                \draw [fill=c2] (15.000,5.000) -- (20.000,0.000) -- (20.000,10.000) -- (20.000,10.000) -- cycle;
                \draw [thick] (0.000,0.000) -- (20.000,0.000) -- (20.000,10.000) -- (0.000,10.000) -- (0.000,10.000) -- cycle;
                \end{tikzpicture}
 \caption{$BC(1,2)$.}
                \label{FigBC12}
        \end{minipage}
\hspace{0.1cm}
        \begin{minipage}[b]{0.45\linewidth}
        \begin{tikzpicture}[scale=0.24,black,semithick,line join=round]

        \definecolor{c0}{RGB}{169,114,0}
        \definecolor{c1}{RGB}{153,97,0}
        \definecolor{c2}{RGB}{215,172,0}
        \definecolor{c3}{RGB}{240,214,0}
        \definecolor{c4}{RGB}{176,0,0}
        \definecolor{c5}{RGB}{138,0,0}
        \definecolor{c6}{RGB}{251,237,0}
        \definecolor{c7}{RGB}{185,133,0}
        \definecolor{c8}{RGB}{200,152,0}
        \definecolor{c9}{RGB}{102,0,0}
        \definecolor{c10}{RGB}{228,193,0}
        \definecolor{c11}{RGB}{217,0,0}

        \draw [fill=c0] (0.000,0.000) -- (10.000,0.000) -- (7.500,2.500) -- (7.500,2.500) -- cycle;
        \draw [fill=c1] (0.000,0.000) -- (7.500,2.500) -- (6.667,3.333) -- (6.667,3.333) -- cycle;
        \draw [fill=c2] (0.000,0.000) -- (6.667,3.333) -- (5.000,5.000) -- (5.000,5.000) -- cycle;
        \draw [fill=c3] (0.000,0.000) -- (5.000,5.000) -- (0.000,10.000) -- (0.000,10.000) -- cycle;
        \draw [fill=c2] (0.000,10.000) -- (5.000,5.000) -- (6.667,6.667) -- (6.667,6.667) -- cycle;
        \draw [fill=c1] (0.000,10.000) -- (6.667,6.667) -- (7.500,7.500) -- (7.500,7.500) -- cycle;
        \draw [fill=c0] (0.000,10.000) -- (7.500,7.500) -- (10.000,10.000) -- (10.000,10.000) -- cycle;
        \draw [fill=c4] (5.000,5.000) -- (6.667,3.333) -- (10.000,5.000) -- (6.667,6.667) -- (6.667,6.667) -- cycle;
        \draw [fill=c5] (6.667,3.333) -- (7.500,2.500) -- (10.000,3.333) -- (10.000,5.000) -- (10.000,5.000) -- cycle;
        \draw [fill=c5] (6.667,6.667) -- (10.000,5.000) -- (10.000,6.667) -- (7.500,7.500) -- (7.500,7.500) -- cycle;
        \draw [fill=c6] (7.500,2.500) -- (10.000,0.000) -- (10.000,3.333) -- (10.000,3.333) -- cycle;
        \draw [fill=c6] (7.500,7.500) -- (10.000,6.667) -- (10.000,10.000) -- (10.000,10.000) -- cycle;
        \draw [fill=c7] (10.000,0.000) -- (20.000,0.000) -- (15.000,2.500) -- (15.000,2.500) -- cycle;
        \draw [fill=c8] (10.000,0.000) -- (15.000,2.500) -- (13.333,3.333) -- (13.333,3.333) -- cycle;
        \draw [fill=c5] (10.000,0.000) -- (13.333,3.333) -- (12.000,4.000) -- (10.000,3.333) -- (10.000,3.333) -- cycle;
        \draw [fill=c6] (10.000,3.333) -- (12.000,4.000) -- (10.000,5.000) -- (10.000,5.000) -- cycle;
        \draw [fill=c6] (10.000,5.000) -- (12.000,6.000) -- (10.000,6.667) -- (10.000,6.667) -- cycle;
        \draw [fill=c9] (10.000,5.000) -- (12.000,4.000) -- (15.000,5.000) -- (12.000,6.000) -- (12.000,6.000) -- cycle;
        \draw [fill=c5] (10.000,6.667) -- (12.000,6.000) -- (13.333,6.667) -- (10.000,10.000) -- (10.000,10.000) -- cycle;
        \draw [fill=c8] (10.000,10.000) -- (13.333,6.667) -- (15.000,7.500) -- (15.000,7.500) -- cycle;
        \draw [fill=c7] (10.000,10.000) -- (15.000,7.500) -- (20.000,10.000) -- (20.000,10.000) -- cycle;
        \draw [fill=c10] (12.000,4.000) -- (13.333,3.333) -- (15.000,5.000) -- (15.000,5.000) -- cycle;
        \draw [fill=c10] (12.000,6.000) -- (15.000,5.000) -- (13.333,6.667) -- (13.333,6.667) -- cycle;
        \draw [fill=c11] (13.333,3.333) -- (15.000,2.500) -- (16.667,3.333) -- (15.000,5.000) -- (15.000,5.000) -- cycle;
        \draw [fill=c11] (13.333,6.667) -- (15.000,5.000) -- (16.667,6.667) -- (15.000,7.500) -- (15.000,7.500) -- cycle;
        \draw [fill=c8] (15.000,2.500) -- (20.000,0.000) -- (16.667,3.333) -- (16.667,3.333) -- cycle;
        \draw [fill=c10] (15.000,5.000) -- (18.000,6.000) -- (16.667,6.667) -- (16.667,6.667) -- cycle;
        \draw [fill=c10] (15.000,5.000) -- (16.667,3.333) -- (18.000,4.000) -- (18.000,4.000) -- cycle;
        \draw [fill=c9] (15.000,5.000) -- (18.000,4.000) -- (20.000,5.000) -- (18.000,6.000) -- (18.000,6.000) -- cycle;
        \draw [fill=c8] (15.000,7.500) -- (16.667,6.667) -- (20.000,10.000) -- (20.000,10.000) -- cycle;
        \draw [fill=c5] (16.667,3.333) -- (20.000,0.000) -- (20.000,3.333) -- (18.000,4.000) -- (18.000,4.000) -- cycle;
        \draw [fill=c5] (16.667,6.667) -- (18.000,6.000) -- (20.000,6.667) -- (20.000,10.000) -- (20.000,10.000) -- cycle;
        \draw [fill=c6] (18.000,4.000) -- (20.000,3.333) -- (20.000,5.000) -- (20.000,5.000) -- cycle;
        \draw [fill=c6] (18.000,6.000) -- (20.000,5.000) -- (20.000,6.667) -- (20.000,6.667) -- cycle;
        \draw [fill=c0] (20.000,0.000) -- (30.000,0.000) -- (22.500,2.500) -- (22.500,2.500) -- cycle;
        \draw [fill=c6] (20.000,0.000) -- (22.500,2.500) -- (20.000,3.333) -- (20.000,3.333) -- cycle;
        \draw [fill=c5] (20.000,3.333) -- (22.500,2.500) -- (23.333,3.333) -- (20.000,5.000) -- (20.000,5.000) -- cycle;
        \draw [fill=c5] (20.000,5.000) -- (23.333,6.667) -- (22.500,7.500) -- (20.000,6.667) -- (20.000,6.667) -- cycle;
        \draw [fill=c4] (20.000,5.000) -- (23.333,3.333) -- (25.000,5.000) -- (23.333,6.667) -- (23.333,6.667) -- cycle;
        \draw [fill=c6] (20.000,6.667) -- (22.500,7.500) -- (20.000,10.000) -- (20.000,10.000) -- cycle;
        \draw [fill=c0] (20.000,10.000) -- (22.500,7.500) -- (30.000,10.000) -- (30.000,10.000) -- cycle;
        \draw [fill=c1] (22.500,2.500) -- (30.000,0.000) -- (23.333,3.333) -- (23.333,3.333) -- cycle;
        \draw [fill=c1] (22.500,7.500) -- (23.333,6.667) -- (30.000,10.000) -- (30.000,10.000) -- cycle;
        \draw [fill=c2] (23.333,3.333) -- (30.000,0.000) -- (25.000,5.000) -- (25.000,5.000) -- cycle;
        \draw [fill=c2] (23.333,6.667) -- (25.000,5.000) -- (30.000,10.000) -- (30.000,10.000) -- cycle;
        \draw [fill=c3] (25.000,5.000) -- (30.000,0.000) -- (30.000,10.000) -- (30.000,10.000) -- cycle;
        \draw [thick] (0.000,0.000) -- (30.000,0.000) -- (30.000,10.000) -- (0.000,10.000) -- (0.000,10.000) -- cycle;
        \end{tikzpicture}
        \caption{$BC(1,3)$. }\label{FigBC13}
                \end{minipage}
\end{figure}


\begin{figure}[!ht]

\begin{minipage}[b]{0.45\linewidth}
\begin{center}
        \begin{tikzpicture}[scale=0.300,black,semithick,line join=round]
\definecolor{c0}{RGB}{246,225,0}
\definecolor{c1}{RGB}{215,172,0}
\definecolor{c2}{RGB}{169,114,0}
\definecolor{c3}{RGB}{153,97,0}
\definecolor{c4}{RGB}{185,133,0}
\definecolor{c5}{RGB}{251,237,0}
\definecolor{c6}{RGB}{234,203,0}
\definecolor{c7}{RGB}{207,162,0}
\definecolor{c8}{RGB}{161,105,0}
\definecolor{c9}{RGB}{193,142,0}
\definecolor{c10}{RGB}{200,152,0}
\definecolor{c11}{RGB}{255,248,0}
\definecolor{c12}{RGB}{130,0,0}
\definecolor{c13}{RGB}{226,0,0}
\definecolor{c14}{RGB}{209,0,0}
\definecolor{c15}{RGB}{116,0,0}
\definecolor{c16}{RGB}{161,0,0}
\definecolor{c17}{RGB}{176,0,0}
\definecolor{c18}{RGB}{243,0,0}
\definecolor{c19}{RGB}{240,214,0}
\definecolor{c20}{RGB}{228,193,0}
\definecolor{c21}{RGB}{192,0,0}
\definecolor{c22}{RGB}{145,0,0}
\definecolor{c23}{RGB}{221,182,0}
\definecolor{c24}{RGB}{177,123,0}
\definecolor{c25}{RGB}{102,0,0}

\draw [fill=c0] (0.000,0.000) -- (10.000,0.000) -- (5.000,5.000) -- (5.000,5.000) -- cycle;
\draw [fill=c1] (0.000,0.000) -- (5.000,5.000) -- (3.333,6.667) -- (3.333,6.667) -- cycle;
\draw [fill=c2] (0.000,0.000) -- (3.333,6.667) -- (2.500,7.500) -- (2.500,7.500) -- cycle;
\draw [fill=c3] (0.000,0.000) -- (2.500,7.500) -- (2.000,8.000) -- (2.000,8.000) -- cycle;
\draw [fill=c4] (0.000,0.000) -- (2.000,8.000) -- (0.000,10.000) -- (0.000,10.000) -- cycle;
\draw [fill=c5] (0.000,10.000) -- (2.000,8.000) -- (2.500,10.000) -- (2.500,10.000) -- cycle;
\draw [fill=c6] (0.000,10.000) -- (2.500,10.000) -- (3.333,13.333) -- (3.333,13.333) -- cycle;
\draw [fill=c7] (0.000,10.000) -- (3.333,13.333) -- (2.500,15.000) -- (2.500,15.000) -- cycle;
\draw [fill=c8] (0.000,10.000) -- (2.500,15.000) -- (2.000,16.000) -- (2.000,16.000) -- cycle;
\draw [fill=c9] (0.000,10.000) -- (2.000,16.000) -- (0.000,20.000) -- (0.000,20.000) -- cycle;
\draw [fill=c10] (0.000,20.000) -- (2.000,16.000) -- (2.500,17.500) -- (2.500,17.500) -- cycle;
\draw [fill=c11] (0.000,20.000) -- (2.500,17.500) -- (3.333,20.000) -- (3.333,20.000) -- cycle;
\draw [fill=c11] (0.000,20.000) -- (3.333,20.000) -- (2.500,22.500) -- (2.500,22.500) -- cycle;
\draw [fill=c10] (0.000,20.000) -- (2.500,22.500) -- (2.000,24.000) -- (2.000,24.000) -- cycle;
\draw [fill=c9] (0.000,20.000) -- (2.000,24.000) -- (0.000,30.000) -- (0.000,30.000) -- cycle;
\draw [fill=c8] (0.000,30.000) -- (2.000,24.000) -- (2.500,25.000) -- (2.500,25.000) -- cycle;
\draw [fill=c7] (0.000,30.000) -- (2.500,25.000) -- (3.333,26.667) -- (3.333,26.667) -- cycle;
\draw [fill=c6] (0.000,30.000) -- (3.333,26.667) -- (2.500,30.000) -- (2.500,30.000) -- cycle;
\draw [fill=c5] (0.000,30.000) -- (2.500,30.000) -- (2.000,32.000) -- (2.000,32.000) -- cycle;
\draw [fill=c4] (0.000,30.000) -- (2.000,32.000) -- (0.000,40.000) -- (0.000,40.000) -- cycle;
\draw [fill=c3] (0.000,40.000) -- (2.000,32.000) -- (2.500,32.500) -- (2.500,32.500) -- cycle;
\draw [fill=c2] (0.000,40.000) -- (2.500,32.500) -- (3.333,33.333) -- (3.333,33.333) -- cycle;
\draw [fill=c1] (0.000,40.000) -- (3.333,33.333) -- (5.000,35.000) -- (5.000,35.000) -- cycle;
\draw [fill=c0] (0.000,40.000) -- (5.000,35.000) -- (10.000,40.000) -- (10.000,40.000) -- cycle;
\draw [fill=c12] (2.000,8.000) -- (2.500,7.500) -- (3.333,10.000) -- (2.500,10.000) -- (2.500,10.000) -- cycle;
\draw [fill=c13] (2.000,16.000) -- (2.500,15.000) -- (3.333,16.667) -- (2.500,17.500) -- (2.500,17.500) -- cycle;
\draw [fill=c13] (2.000,24.000) -- (2.500,22.500) -- (3.333,23.333) -- (2.500,25.000) -- (2.500,25.000) -- cycle;
\draw [fill=c12] (2.000,32.000) -- (2.500,30.000) -- (3.333,30.000) -- (2.500,32.500) -- (2.500,32.500) -- cycle;
\draw [fill=c14] (2.500,7.500) -- (3.333,6.667) -- (5.000,10.000) -- (3.333,10.000) -- (3.333,10.000) -- cycle;
\draw [fill=c15] (2.500,10.000) -- (3.333,10.000) -- (4.000,12.000) -- (3.333,13.333) -- (3.333,13.333) -- cycle;
\draw [fill=c16] (2.500,15.000) -- (3.333,13.333) -- (4.000,16.000) -- (3.333,16.667) -- (3.333,16.667) -- cycle;
\draw [fill=c17] (2.500,17.500) -- (3.333,16.667) -- (4.000,18.000) -- (3.333,20.000) -- (3.333,20.000) -- cycle;
\draw [fill=c17] (2.500,22.500) -- (3.333,20.000) -- (4.000,22.000) -- (3.333,23.333) -- (3.333,23.333) -- cycle;
\draw [fill=c16] (2.500,25.000) -- (3.333,23.333) -- (4.000,24.000) -- (3.333,26.667) -- (3.333,26.667) -- cycle;
\draw [fill=c15] (2.500,30.000) -- (3.333,26.667) -- (4.000,28.000) -- (3.333,30.000) -- (3.333,30.000) -- cycle;
\draw [fill=c14] (2.500,32.500) -- (3.333,30.000) -- (5.000,30.000) -- (3.333,33.333) -- (3.333,33.333) -- cycle;
\draw [fill=c18] (3.333,6.667) -- (5.000,5.000) -- (6.667,6.667) -- (5.000,10.000) -- (5.000,10.000) -- cycle;
\draw [fill=c11] (3.333,10.000) -- (5.000,10.000) -- (4.000,12.000) -- (4.000,12.000) -- cycle;
\draw [fill=c19] (3.333,13.333) -- (5.000,15.000) -- (4.000,16.000) -- (4.000,16.000) -- cycle;
\draw [fill=c20] (3.333,13.333) -- (4.000,12.000) -- (5.000,15.000) -- (5.000,15.000) -- cycle;
\draw [fill=c21] (3.333,16.667) -- (4.000,16.000) -- (4.286,17.143) -- (4.000,18.000) -- (4.000,18.000) -- cycle;
\draw [fill=c11] (3.333,20.000) -- (5.000,20.000) -- (4.000,22.000) -- (4.000,22.000) -- cycle;
\draw [fill=c11] (3.333,20.000) -- (4.000,18.000) -- (5.000,20.000) -- (5.000,20.000) -- cycle;
\draw [fill=c21] (3.333,23.333) -- (4.000,22.000) -- (4.286,22.857) -- (4.000,24.000) -- (4.000,24.000) -- cycle;
\draw [fill=c19] (3.333,26.667) -- (4.000,24.000) -- (5.000,25.000) -- (5.000,25.000) -- cycle;
\draw [fill=c20] (3.333,26.667) -- (5.000,25.000) -- (4.000,28.000) -- (4.000,28.000) -- cycle;
\draw [fill=c11] (3.333,30.000) -- (4.000,28.000) -- (5.000,30.000) -- (5.000,30.000) -- cycle;
\draw [fill=c18] (3.333,33.333) -- (5.000,30.000) -- (6.667,33.333) -- (5.000,35.000) -- (5.000,35.000) -- cycle;
\draw [fill=c22] (4.000,12.000) -- (5.000,10.000) -- (6.000,12.000) -- (5.000,15.000) -- (5.000,15.000) -- cycle;
\draw [fill=c23] (4.000,16.000) -- (5.000,15.000) -- (4.286,17.143) -- (4.286,17.143) -- cycle;
\draw [fill=c24] (4.000,18.000) -- (4.286,17.143) -- (5.000,20.000) -- (5.000,20.000) -- cycle;
\draw [fill=c24] (4.000,22.000) -- (5.000,20.000) -- (4.286,22.857) -- (4.286,22.857) -- cycle;
\draw [fill=c23] (4.000,24.000) -- (4.286,22.857) -- (5.000,25.000) -- (5.000,25.000) -- cycle;
\draw [fill=c22] (4.000,28.000) -- (5.000,25.000) -- (6.000,28.000) -- (5.000,30.000) -- (5.000,30.000) -- cycle;
\draw [fill=c25] (4.286,17.143) -- (5.000,15.000) -- (5.714,17.143) -- (5.000,20.000) -- (5.000,20.000) -- cycle;
\draw [fill=c25] (4.286,22.857) -- (5.000,20.000) -- (5.714,22.857) -- (5.000,25.000) -- (5.000,25.000) -- cycle;
\draw [fill=c1] (5.000,5.000) -- (10.000,0.000) -- (6.667,6.667) -- (6.667,6.667) -- cycle;
\draw [fill=c11] (5.000,10.000) -- (6.667,10.000) -- (6.000,12.000) -- (6.000,12.000) -- cycle;
\draw [fill=c14] (5.000,10.000) -- (6.667,6.667) -- (7.500,7.500) -- (6.667,10.000) -- (6.667,10.000) -- cycle;
\draw [fill=c23] (5.000,15.000) -- (6.000,16.000) -- (5.714,17.143) -- (5.714,17.143) -- cycle;
\draw [fill=c20] (5.000,15.000) -- (6.000,12.000) -- (6.667,13.333) -- (6.667,13.333) -- cycle;
\draw [fill=c19] (5.000,15.000) -- (6.667,13.333) -- (6.000,16.000) -- (6.000,16.000) -- cycle;
\draw [fill=c11] (5.000,20.000) -- (6.667,20.000) -- (6.000,22.000) -- (6.000,22.000) -- cycle;
\draw [fill=c24] (5.000,20.000) -- (6.000,22.000) -- (5.714,22.857) -- (5.714,22.857) -- cycle;
\draw [fill=c24] (5.000,20.000) -- (5.714,17.143) -- (6.000,18.000) -- (6.000,18.000) -- cycle;
\draw [fill=c11] (5.000,20.000) -- (6.000,18.000) -- (6.667,20.000) -- (6.667,20.000) -- cycle;
\draw [fill=c20] (5.000,25.000) -- (6.667,26.667) -- (6.000,28.000) -- (6.000,28.000) -- cycle;
\draw [fill=c23] (5.000,25.000) -- (5.714,22.857) -- (6.000,24.000) -- (6.000,24.000) -- cycle;
\draw [fill=c19] (5.000,25.000) -- (6.000,24.000) -- (6.667,26.667) -- (6.667,26.667) -- cycle;
\draw [fill=c14] (5.000,30.000) -- (6.667,30.000) -- (7.500,32.500) -- (6.667,33.333) -- (6.667,33.333) -- cycle;
\draw [fill=c11] (5.000,30.000) -- (6.000,28.000) -- (6.667,30.000) -- (6.667,30.000) -- cycle;
\draw [fill=c1] (5.000,35.000) -- (6.667,33.333) -- (10.000,40.000) -- (10.000,40.000) -- cycle;
\draw [fill=c21] (5.714,17.143) -- (6.000,16.000) -- (6.667,16.667) -- (6.000,18.000) -- (6.000,18.000) -- cycle;
\draw [fill=c21] (5.714,22.857) -- (6.000,22.000) -- (6.667,23.333) -- (6.000,24.000) -- (6.000,24.000) -- cycle;
\draw [fill=c15] (6.000,12.000) -- (6.667,10.000) -- (7.500,10.000) -- (6.667,13.333) -- (6.667,13.333) -- cycle;
\draw [fill=c16] (6.000,16.000) -- (6.667,13.333) -- (7.500,15.000) -- (6.667,16.667) -- (6.667,16.667) -- cycle;
\draw [fill=c17] (6.000,18.000) -- (6.667,16.667) -- (7.500,17.500) -- (6.667,20.000) -- (6.667,20.000) -- cycle;
\draw [fill=c17] (6.000,22.000) -- (6.667,20.000) -- (7.500,22.500) -- (6.667,23.333) -- (6.667,23.333) -- cycle;
\draw [fill=c16] (6.000,24.000) -- (6.667,23.333) -- (7.500,25.000) -- (6.667,26.667) -- (6.667,26.667) -- cycle;
\draw [fill=c15] (6.000,28.000) -- (6.667,26.667) -- (7.500,30.000) -- (6.667,30.000) -- (6.667,30.000) -- cycle;
\draw [fill=c2] (6.667,6.667) -- (10.000,0.000) -- (7.500,7.500) -- (7.500,7.500) -- cycle;
\draw [fill=c12] (6.667,10.000) -- (7.500,7.500) -- (8.000,8.000) -- (7.500,10.000) -- (7.500,10.000) -- cycle;
\draw [fill=c6] (6.667,13.333) -- (7.500,10.000) -- (10.000,10.000) -- (10.000,10.000) -- cycle;
\draw [fill=c7] (6.667,13.333) -- (10.000,10.000) -- (7.500,15.000) -- (7.500,15.000) -- cycle;
\draw [fill=c13] (6.667,16.667) -- (7.500,15.000) -- (8.000,16.000) -- (7.500,17.500) -- (7.500,17.500) -- cycle;
\draw [fill=c11] (6.667,20.000) -- (10.000,20.000) -- (7.500,22.500) -- (7.500,22.500) -- cycle;
\draw [fill=c11] (6.667,20.000) -- (7.500,17.500) -- (10.000,20.000) -- (10.000,20.000) -- cycle;
\draw [fill=c13] (6.667,23.333) -- (7.500,22.500) -- (8.000,24.000) -- (7.500,25.000) -- (7.500,25.000) -- cycle;
\draw [fill=c6] (6.667,26.667) -- (10.000,30.000) -- (7.500,30.000) -- (7.500,30.000) -- cycle;
\draw [fill=c7] (6.667,26.667) -- (7.500,25.000) -- (10.000,30.000) -- (10.000,30.000) -- cycle;
\draw [fill=c12] (6.667,30.000) -- (7.500,30.000) -- (8.000,32.000) -- (7.500,32.500) -- (7.500,32.500) -- cycle;
\draw [fill=c2] (6.667,33.333) -- (7.500,32.500) -- (10.000,40.000) -- (10.000,40.000) -- cycle;
\draw [fill=c3] (7.500,7.500) -- (10.000,0.000) -- (8.000,8.000) -- (8.000,8.000) -- cycle;
\draw [fill=c5] (7.500,10.000) -- (8.000,8.000) -- (10.000,10.000) -- (10.000,10.000) -- cycle;
\draw [fill=c8] (7.500,15.000) -- (10.000,10.000) -- (8.000,16.000) -- (8.000,16.000) -- cycle;
\draw [fill=c10] (7.500,17.500) -- (8.000,16.000) -- (10.000,20.000) -- (10.000,20.000) -- cycle;
\draw [fill=c10] (7.500,22.500) -- (10.000,20.000) -- (8.000,24.000) -- (8.000,24.000) -- cycle;
\draw [fill=c8] (7.500,25.000) -- (8.000,24.000) -- (10.000,30.000) -- (10.000,30.000) -- cycle;
\draw [fill=c5] (7.500,30.000) -- (10.000,30.000) -- (8.000,32.000) -- (8.000,32.000) -- cycle;
\draw [fill=c3] (7.500,32.500) -- (8.000,32.000) -- (10.000,40.000) -- (10.000,40.000) -- cycle;
\draw [fill=c4] (8.000,8.000) -- (10.000,0.000) -- (10.000,10.000) -- (10.000,10.000) -- cycle;
\draw [fill=c9] (8.000,16.000) -- (10.000,10.000) -- (10.000,20.000) -- (10.000,20.000) -- cycle;
\draw [fill=c9] (8.000,24.000) -- (10.000,20.000) -- (10.000,30.000) -- (10.000,30.000) -- cycle;
\draw [fill=c4] (8.000,32.000) -- (10.000,30.000) -- (10.000,40.000) -- (10.000,40.000) -- cycle;
\draw [thick] (0.000,0.000) -- (10.000,0.000) -- (10.000,40.000) -- (0.000,40.000) -- (0.000,40.000) -- cycle;
\end{tikzpicture}
\end{center}
\caption{$BC(4,1)$ colored
using to the red and yellow palettes  (see  \textsection\ref{SubSecYR}).} \label{FigBC14}
\end{minipage}
 \hspace{0.4cm}
   \begin{minipage}[b]{0.45\linewidth}
\centerline{\includegraphics[angle=0, width=1.25in]{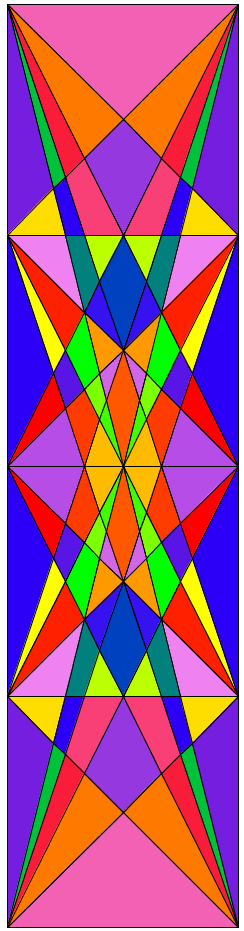}}
   \caption{A version of $BC(4,1)$ colored by our `random coloring' algorithm
   (see  \textsection\ref{SubSecRan}).}
   \label{FigBC14c}
   \end{minipage}
\end{figure}

Of course we could equally well have started with a vertical rectangle of size $n \times 1$,
 in which case the graph would be denoted by $BC(n,1)$. Since this work was 
 partly inspired by the windows of  Gothic cathedrals, we admit to a slight preference 
 for $BC(n,1)$ over $BC(1,n)$, although as graphs they are isomorphic.
 Figs.~\ref{FigBC14} and \ref{FigBC14c} show our stained glass window $BC(4,1)$ using 
 two different coloring schemes.

 We will continue to discuss $BC(1,n)$, but the reader should remember that the results apply equally well to $BC(n,1)$.
  

Another way to construct $BC(1,n)$ is to start with the complete bipartite graph
$K_{n+1,n+1}$ formed by taking $n+1$ equally spaced points in each of two horizontal rows,
joining every upper point to every lower point, and then adding the line segments through the two rows of points.
Thus $BC(1,2)$ in Fig.~\ref{FigBC12} is  the well-known nonplanar ``utilities'' graph $K_{3,3}$
if the two horizontal lines and the colors  are ignored.

The graphs $BC(1,n)$ 
are one of the few families where there
are explicit formulas for the numbers of nodes ($\sN(1,n)$), edges ($\sE(1,n)$),
and cells ($\sC(1,n)$).  The initial values of these quantities are shown in Table~\ref{TabBC0},
along with the $A$-numbers in \cite{OEIS})
 of the corresponding sequences.

\begin{table}[!ht]
\caption{Numbers of nodes, edges, cells in $BC(1,n)$. }\label{TabBC0}
$$
   \begin{array}{crrrrrrrrrrrc}
   n: & 1 & 2 & 3 & 4 & 5 & 6 & 7 & 8 & 9 & 10 &  \cdots &  \cite{OEIS} \\
    \sN(1,n): & 5 &  13 &  35 &  75 &  159 &  275 &  477 &  755 &  1163 & 1659 & \cdots &  \seqnum{A331755}  \\
    \sE(1,n): & 8 &  28 &  80 &  178 &  372 &  654 &  1124 &  1782 &  2724 &  3914 & \cdots & \seqnum{A331757}  \\
    \sC(1,n): &4 &16 &46 &104 &214 &380 &648 &1028 &1562 &2256 &\cdots & \seqnum{A306302}
   \end{array}
 $$
 \end{table}
   
   Since by Euler's formula \eqn{Eq1},  $\sE(1,n) = \sN(1,n) + \sC(1,n) - 1$, there is no need to tabulate
   $\sE(1,n)$, and in future we shall omit those numbers. 
   
 The following  theorem is due to Legendre (2009) \cite{Leg09} and  Griffiths (2010) \cite{Gri10}, who 
 discuss the problem from the point of view of  $K_{n+1,n+1}$. 
 First we introduce an expression that will frequently appear in these formulas.
For $m,n,q \ge 1$, let
 \beql{EqVVV1}
      V(m,n,q) ~=~    \sum_{a=1..m} ~~   \sum_{\substack{b=1..n \\   \gcd\{a,b\}=q} }  (m+1-a)(n+1-b) \,.
 \eeq
   
 \begin{thm} (Legendre \cite[Prop.~6]{Leg09}, Griffiths \cite[Th.~3]{Gri10}.) \label{ThVRBC1}
For $n \ge 1$, the number of nodes in $BC(1,n)$, $\sN(1,n)$ (\seqnum{A331755}) is given by
\beql{EqVBC1}
\sN(1,n) ~=~ 2(n+1) + V(n,n,1) - V(n,n,2)\,, 
\eeq
and the number of cells, $\sC(1,n)$ (\seqnum{A306302}) is
 \beql{EqRBC1}
   \sC(1,n) ~=~   n^2 + 2n  + V(n,n,1)\,.
 \eeq
 \end{thm}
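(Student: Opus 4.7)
The strategy is to enumerate interior intersection points by organizing crossing chord-pairs according to the gcd of a ``shift vector,'' and then derive the cell count from Euler's formula~\eqref{Eq1}.

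First, label the top nodes $T_0,\ldots,T_n$ and bottom nodes $B_0,\ldots,B_n$, so the relevant chords are $T_iB_j$ for $0\le i,j\le n$. A direct parametrization shows that $T_iB_j$ and $T_{i'}B_{j'}$ (with $i<i'$) cross strictly inside the rectangle iff $j>j'$, and in that case meet uniquely at the point with ordinate $(j-j')/\bigl((i'-i)+(j-j')\bigr)$. In particular the $y$-coordinate of any interior intersection point $P$ equals $p/(p+q)$ for some coprime $p,q\ge 1$.

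The key lemma to prove is: for each such $P$ with $y_P=p/(p+q)$ in lowest terms, the chords through $P$ form a single maximal arithmetic progression $\{T_{i_0+kq}B_{j_0-kp}\}$, $k_{\min}\le k\le k_{\max}$, of some length $m(P)\ge 2$. One inclusion is a direct check; the reverse uses the relation $j=(x_P-y_P\,i)/(1-y_P)$ to force any chord with integer endpoints through $P$ to satisfy $i\equiv i_0\pmod{q}$, which holds because $\gcd(p,q)=1$. I expect this lemma to be the main substantive step; everything that follows is bookkeeping.

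Given the lemma, any two chords in the progression at $P$ have shift vector $(a,b)=(\ell q,\ell p)$ for some $1\le\ell\le m(P)-1$, so $\gcd(a,b)=\ell$; conversely every crossing pair of chords realizes a unique such pair at its unique intersection $P$. Double-counting the crossing pairs by $\gcd(a,b)=d$ yields
\[
\sum_{P}\max\bigl(m(P)-d,\,0\bigr)\;=\;V(n,n,d)\qquad(d\ge 1).
\]
Subtracting the cases $d=1$ and $d=2$, and using $m(P)\ge 2$ at every intersection, the telescoping $(m(P)-1)-\max(m(P)-2,0)=1$ gives $|\{P\}|=V(n,n,1)-V(n,n,2)$; adding the $2(n+1)$ boundary nodes proves~\eqref{EqVBC1}. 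For~\eqref{EqRBC1}, I would count edges as the $2n+2$ boundary segments plus the $(n+1)^2-2$ non-boundary chords subdivided by their interior incidences, giving $\sE(1,n)=2n+(n+1)^2+\sum_P m(P)$, and note that $\sum_P m(P)=V(n,n,1)+|\{P\}|=2V(n,n,1)-V(n,n,2)$; substituting into~\eqref{Eq1} and simplifying yields $\sC(1,n)=n^2+2n+V(n,n,1)$.
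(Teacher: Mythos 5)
Your proof is correct; I checked the key lemma, the double-count, and the edge count, and the formulas agree with the tabulated values for small $n$. It is, however, a genuinely self-contained argument that differs in part from the proofs the paper cites. For the node count your route is close in spirit to Legendre's: his key step is a condition for three chords to be concurrent, which is exactly the content of your arithmetic-progression lemma (the coprimality of $p$ and $q$ forcing $i\equiv i_0\pmod q$ is the right mechanism, and the identity $\sum_P\max(m(P)-d,0)=V(n,n,d)$ followed by the telescoping at $d=1,2$ is a clean way to package the resulting inclusion--exclusion over multiple intersection points). For the cell count your route is different: Griffiths counts cells directly by observing that $BC(1,n)$ has no interior edges parallel to the long sides, so each cell has a unique node closest to the top side, and cells can be enumerated by that node; you instead count edges (boundary segments plus chords subdivided by their $\sum_P m(P)=2V(n,n,1)-V(n,n,2)$ interior incidences) and invoke Euler's formula \eqref{Eq1}. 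The Euler route is arguably more mechanical once \eqref{EqVBC1} is in hand, while Griffiths's bijective count is what later yields the finer information exploited in Theorem \ref{ThTQBC} and Tables \ref{TabWangt}--\ref{TabWangc}; the one point worth making explicit in your write-up is that the chords joining two top (or two bottom) nodes overlap the boundary and contribute only the $2n$ unit segments already counted among the $2n+2$ boundary edges, so they add no further edges or nodes.
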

   
 \noindent Remarks: (i) A key step in the
 proof of~\eqn{EqVBC1} (see  \cite{Leg09}) is finding  a  condition for three chords to meet at a point. 
 (ii) The starting point for the proof of ~\eqn{EqRBC1} (see~\cite{Gri10})  is the observation
 that in the graph $BC(1,n)$ there are no interior edges that are parallel to the two long sides of the rectangle.
 This means that every cell has a unique node that is closest to the upper side of the rectangle.
 (iii) The term $2(n+1)$ on the right-hand side of \eqref{EqVBC1} is the number of nodes on the boundary of the rectangle. The difference between the other two terms is therefore the number of interior nodes in $BC(1,n)$  (\seqnum{A159065}):
 \beql{EqIBC1}
 1, 7, 27, 65, 147, 261, 461, 737, 1143, \cdots \,.
 \eeq
  

\begin{figure}[!ht]
\begin{center}
\begin{tikzpicture}
\draw (0,0) -- (0,6);
\draw (0,0) -- (6,0);
\draw (0,6) -- (6,0);
\draw (0,6) -- (3,0);
\draw (0,6) -- (2,0);
\draw (0,3) -- (6,0);
\draw (0,3) -- (3,0);
\draw (0,3) -- (2,0);
\draw (0,2) -- (6,0);
\draw (0,2) -- (3,0);
\draw (0,2) -- (2,0);
\end{tikzpicture}
\caption{The isosceles triangle graph $IT(2)$. There are $14$ nodes ($7$ on boundary, $7$ in interior), $17$ cells
   ($15$ triangles, $2$ quadrilaterals), and
   $30$ edges.}   \label{FigIT2a}
   \end{center}
\end{figure}
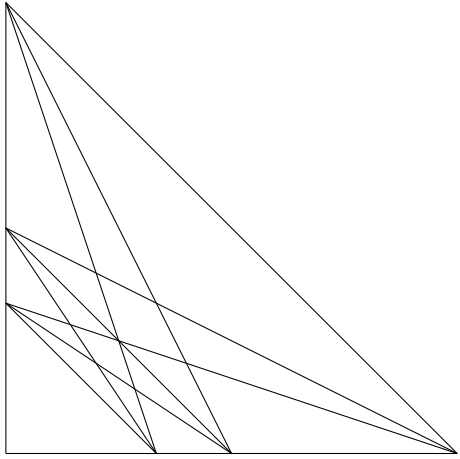

In 2019 Max Alekseyev added a comment to \seqnum{A306302} pointing out that the results in 
Theorem~\ref{ThVRBC1} are essentially the same as the results he and his coauthors had obtained
in \cite{ABZ15} (2015) 
for the isosceles triangle graphs $IT(n)$.


\section{The isosceles triangle graph \texorpdfstring{$IT(n)$}{IT(n)}.}\label{SecIT}

The definition of the \emph{isosceles triangle graph} $IT(n)$, $n\ge 1$, starts with an isosceles right triangle
with vertices $(0,0)$, $(0,1)$, and $(1,0)$. On the vertical side of the triangle  we place $n$ nodes at the points
$$
\left(0,\frac{1}{2}\right),
\left(0,\frac{1}{3}\right),
\left(0,\frac{1}{4}\right),
\ldots,
\left(0,\frac{1}{n+1}\right),
$$
and similarly on the horizontal side
we place $n$ nodes at the points
$$
\left(\frac{1}{2},0\right),
\left(\frac{1}{3},0\right),
\left(\frac{1}{4},0\right),
\ldots,
\left(\frac{1}{n+1},0\right)\,.
$$
There are no internal nodes on the hypotenuse.\footnote{In  Part 2 of this paper~\cite{Rose2}
we will discuss 
graphs formed by inserting $n$ equally-spaced nodes on all three sides of an equilateral triangle.}
We then draw chords between every pair of the $2n+3$ points on the boundary of the triangle.
Figs.~\ref{FigIT2a}, \ref{FigIT3}, \ref{FigIT4} show $IT(2)$, $IT(3)$ and $IT(4)$.
The latter two graphs have been colored using the red and yellow palettes (\textsection\ref{SubSecYR}).

\begin{figure}[!ht]

        \begin{minipage}[b]{0.45\linewidth}

\begin{tikzpicture}[scale=0.069,black,line join=round]

\definecolor{c0}{RGB}{255,249,0}
\definecolor{c1}{RGB}{161,105,0}
\definecolor{c2}{RGB}{176,122,0}
\definecolor{c3}{RGB}{198,148,0}
\definecolor{c4}{RGB}{251,238,0}
\definecolor{c5}{RGB}{183,130,0}
\definecolor{c6}{RGB}{102,0,0}
\definecolor{c7}{RGB}{211,167,0}
\definecolor{c8}{RGB}{247,227,0}
\definecolor{c9}{RGB}{168,113,0}
\definecolor{c10}{RGB}{118,0,0}
\definecolor{c11}{RGB}{224,186,0}
\definecolor{c12}{RGB}{218,177,0}
\definecolor{c13}{RGB}{153,97,0}
\definecolor{c14}{RGB}{190,139,0}
\definecolor{c15}{RGB}{204,158,0}
\definecolor{c16}{RGB}{134,0,0}
\definecolor{c17}{RGB}{150,0,0}
\definecolor{c18}{RGB}{167,0,0}
\definecolor{c19}{RGB}{230,196,0}
\definecolor{c20}{RGB}{222,0,0}
\definecolor{c21}{RGB}{236,207,0}
\definecolor{c22}{RGB}{185,0,0}
\definecolor{c23}{RGB}{241,217,0}
\definecolor{c24}{RGB}{241,0,0}
\definecolor{c25}{RGB}{203,0,0}

\draw [fill=c0] (0.000,0.000) -- (25.000,0.000) -- (0.000,25.000) -- (0.000,25.000) -- cycle;
\draw [fill=c1] (0.000,25.000) -- (25.000,0.000) -- (14.286,14.286) -- (14.286,14.286) -- cycle;
\draw [fill=c2] (0.000,25.000) -- (14.286,14.286) -- (10.000,20.000) -- (10.000,20.000) -- cycle;
\draw [fill=c3] (0.000,25.000) -- (10.000,20.000) -- (7.692,23.077) -- (7.692,23.077) -- cycle;
\draw [fill=c4] (0.000,25.000) -- (7.692,23.077) -- (0.000,33.333) -- (0.000,33.333) -- cycle;
\draw [fill=c5] (0.000,33.333) -- (7.692,23.077) -- (11.111,22.222) -- (11.111,22.222) -- cycle;
\draw [fill=c6] (0.000,33.333) -- (11.111,22.222) -- (14.286,21.429) -- (12.500,25.000) -- (12.500,25.000) -- cycle;
\draw [fill=c7] (0.000,33.333) -- (12.500,25.000) -- (10.000,30.000) -- (10.000,30.000) -- cycle;
\draw [fill=c8] (0.000,33.333) -- (10.000,30.000) -- (0.000,50.000) -- (0.000,50.000) -- cycle;
\draw [fill=c9] (0.000,50.000) -- (10.000,30.000) -- (14.286,28.571) -- (14.286,28.571) -- cycle;
\draw [fill=c10] (0.000,50.000) -- (14.286,28.571) -- (18.182,27.273) -- (16.667,33.333) -- (16.667,33.333) -- cycle;
\draw [fill=c11] (0.000,50.000) -- (16.667,33.333) -- (14.286,42.857) -- (14.286,42.857) -- cycle;
\draw [fill=c12] (0.000,50.000) -- (14.286,42.857) -- (0.000,100.000) -- (0.000,100.000) -- cycle;
\draw [fill=c13] (0.000,100.000) -- (14.286,42.857) -- (20.000,40.000) -- (20.000,40.000) -- cycle;
\draw [fill=c14] (0.000,100.000) -- (20.000,40.000) -- (33.333,33.333) -- (33.333,33.333) -- cycle;
\draw [fill=c15] (0.000,100.000) -- (33.333,33.333) -- (100.000,0.000) -- (100.000,0.000) -- cycle;
\draw [fill=c16] (7.692,23.077) -- (10.000,20.000) -- (16.667,16.667) -- (11.111,22.222) -- (11.111,22.222) -- cycle;
\draw [fill=c17] (10.000,20.000) -- (14.286,14.286) -- (20.000,10.000) -- (16.667,16.667) -- (16.667,16.667) -- cycle;
\draw [fill=c18] (10.000,30.000) -- (12.500,25.000) -- (20.000,20.000) -- (14.286,28.571) -- (14.286,28.571) -- cycle;
\draw [fill=c11] (11.111,22.222) -- (16.667,16.667) -- (14.286,21.429) -- (14.286,21.429) -- cycle;
\draw [fill=c19] (12.500,25.000) -- (14.286,21.429) -- (20.000,20.000) -- (20.000,20.000) -- cycle;
\draw [fill=c2] (14.286,14.286) -- (25.000,0.000) -- (20.000,10.000) -- (20.000,10.000) -- cycle;
\draw [fill=c20] (14.286,21.429) -- (16.667,16.667) -- (21.429,14.286) -- (20.000,20.000) -- (20.000,20.000) -- cycle;
\draw [fill=c21] (14.286,28.571) -- (20.000,20.000) -- (18.182,27.273) -- (18.182,27.273) -- cycle;
\draw [fill=c22] (14.286,42.857) -- (16.667,33.333) -- (25.000,25.000) -- (20.000,40.000) -- (20.000,40.000) -- cycle;
\draw [fill=c16] (16.667,16.667) -- (20.000,10.000) -- (23.077,7.692) -- (22.222,11.111) -- (22.222,11.111) -- cycle;
\draw [fill=c11] (16.667,16.667) -- (22.222,11.111) -- (21.429,14.286) -- (21.429,14.286) -- cycle;
\draw [fill=c23] (16.667,33.333) -- (18.182,27.273) -- (25.000,25.000) -- (25.000,25.000) -- cycle;
\draw [fill=c24] (18.182,27.273) -- (20.000,20.000) -- (27.273,18.182) -- (25.000,25.000) -- (25.000,25.000) -- cycle;
\draw [fill=c3] (20.000,10.000) -- (25.000,0.000) -- (23.077,7.692) -- (23.077,7.692) -- cycle;
\draw [fill=c19] (20.000,20.000) -- (21.429,14.286) -- (25.000,12.500) -- (25.000,12.500) -- cycle;
\draw [fill=c18] (20.000,20.000) -- (25.000,12.500) -- (30.000,10.000) -- (28.571,14.286) -- (28.571,14.286) -- cycle;
\draw [fill=c21] (20.000,20.000) -- (28.571,14.286) -- (27.273,18.182) -- (27.273,18.182) -- cycle;
\draw [fill=c25] (20.000,40.000) -- (25.000,25.000) -- (40.000,20.000) -- (33.333,33.333) -- (33.333,33.333) -- cycle;
\draw [fill=c6] (21.429,14.286) -- (22.222,11.111) -- (33.333,0.000) -- (25.000,12.500) -- (25.000,12.500) -- cycle;
\draw [fill=c5] (22.222,11.111) -- (23.077,7.692) -- (33.333,0.000) -- (33.333,0.000) -- cycle;
\draw [fill=c4] (23.077,7.692) -- (25.000,0.000) -- (33.333,0.000) -- (33.333,0.000) -- cycle;
\draw [fill=c7] (25.000,12.500) -- (33.333,0.000) -- (30.000,10.000) -- (30.000,10.000) -- cycle;
\draw [fill=c23] (25.000,25.000) -- (27.273,18.182) -- (33.333,16.667) -- (33.333,16.667) -- cycle;
\draw [fill=c22] (25.000,25.000) -- (33.333,16.667) -- (42.857,14.286) -- (40.000,20.000) -- (40.000,20.000) -- cycle;
\draw [fill=c10] (27.273,18.182) -- (28.571,14.286) -- (50.000,0.000) -- (33.333,16.667) -- (33.333,16.667) -- cycle;
\draw [fill=c9] (28.571,14.286) -- (30.000,10.000) -- (50.000,0.000) -- (50.000,0.000) -- cycle;
\draw [fill=c8] (30.000,10.000) -- (33.333,0.000) -- (50.000,0.000) -- (50.000,0.000) -- cycle;
\draw [fill=c11] (33.333,16.667) -- (50.000,0.000) -- (42.857,14.286) -- (42.857,14.286) -- cycle;
\draw [fill=c14] (33.333,33.333) -- (40.000,20.000) -- (100.000,0.000) -- (100.000,0.000) -- cycle;
\draw [fill=c13] (40.000,20.000) -- (42.857,14.286) -- (100.000,0.000) -- (100.000,0.000) -- cycle;
\draw [fill=c12] (42.857,14.286) -- (50.000,0.000) -- (100.000,0.000) -- (100.000,0.000) -- cycle;
\draw [thick] (0.000,0.000) -- (100.000,0.000)  -- (0.000,100.000) -- (0.000,100.000) -- cycle;
\end{tikzpicture}

                \caption{$IT(3)$ ($33$ triangles, $14$ quadrilaterals)}
                \label{FigIT3}
        \end{minipage}
        \hspace{0.3cm}
        \begin{minipage}[b]{0.45\linewidth}
\begin{tikzpicture}[scale=0.069,black,line join=round]

\definecolor{c0}{RGB}{255,255,0}
\definecolor{c1}{RGB}{157,101,0}
\definecolor{c2}{RGB}{167,112,0}
\definecolor{c3}{RGB}{184,132,0}
\definecolor{c4}{RGB}{204,157,0}
\definecolor{c5}{RGB}{255,250,0}
\definecolor{c6}{RGB}{174,120,0}
\definecolor{c7}{RGB}{181,128,0}
\definecolor{c8}{RGB}{194,144,0}
\definecolor{c9}{RGB}{213,170,0}
\definecolor{c10}{RGB}{254,245,0}
\definecolor{c11}{RGB}{171,116,0}
\definecolor{c12}{RGB}{188,136,0}
\definecolor{c13}{RGB}{201,152,0}
\definecolor{c14}{RGB}{228,192,0}
\definecolor{c15}{RGB}{252,240,0}
\definecolor{c16}{RGB}{164,108,0}
\definecolor{c17}{RGB}{177,124,0}
\definecolor{c18}{RGB}{197,148,0}
\definecolor{c19}{RGB}{243,220,0}
\definecolor{c20}{RGB}{216,174,0}
\definecolor{c21}{RGB}{153,97,0}
\definecolor{c22}{RGB}{160,104,0}
\definecolor{c23}{RGB}{191,140,0}
\definecolor{c24}{RGB}{219,178,0}
\definecolor{c25}{RGB}{116,0,0}
\definecolor{c26}{RGB}{102,0,0}
\definecolor{c27}{RGB}{123,0,0}
\definecolor{c28}{RGB}{145,0,0}
\definecolor{c29}{RGB}{168,0,0}
\definecolor{c30}{RGB}{109,0,0}
\definecolor{c31}{RGB}{209,0,0}
\definecolor{c32}{RGB}{153,0,0}
\definecolor{c33}{RGB}{207,161,0}
\definecolor{c34}{RGB}{210,165,0}
\definecolor{c35}{RGB}{222,183,0}
\definecolor{c36}{RGB}{192,0,0}
\definecolor{c37}{RGB}{161,0,0}
\definecolor{c38}{RGB}{217,0,0}
\definecolor{c39}{RGB}{176,0,0}
\definecolor{c40}{RGB}{225,187,0}
\definecolor{c41}{RGB}{241,216,0}
\definecolor{c42}{RGB}{138,0,0}
\definecolor{c43}{RGB}{230,197,0}
\definecolor{c44}{RGB}{233,201,0}
\definecolor{c45}{RGB}{184,0,0}
\definecolor{c46}{RGB}{200,0,0}
\definecolor{c47}{RGB}{234,0,0}
\definecolor{c48}{RGB}{238,211,0}
\definecolor{c49}{RGB}{236,206,0}
\definecolor{c50}{RGB}{245,225,0}
\definecolor{c51}{RGB}{130,0,0}
\definecolor{c52}{RGB}{248,230,0}
\definecolor{c53}{RGB}{251,0,0}
\definecolor{c54}{RGB}{250,235,0}
\definecolor{c55}{RGB}{243,0,0}
\definecolor{c56}{RGB}{226,0,0}

\draw [fill=c0] (0.000,0.000) -- (20.000,0.000) -- (0.000,20.000) -- (0.000,20.000) -- cycle;
\draw [fill=c1] (0.000,20.000) -- (20.000,0.000) -- (11.111,11.111) -- (11.111,11.111) -- cycle;
\draw [fill=c2] (0.000,20.000) -- (11.111,11.111) -- (7.692,15.385) -- (7.692,15.385) -- cycle;
\draw [fill=c3] (0.000,20.000) -- (7.692,15.385) -- (5.882,17.647) -- (5.882,17.647) -- cycle;
\draw [fill=c4] (0.000,20.000) -- (5.882,17.647) -- (4.762,19.048) -- (4.762,19.048) -- cycle;
\draw [fill=c5] (0.000,20.000) -- (4.762,19.048) -- (0.000,25.000) -- (0.000,25.000) -- cycle;
\draw [fill=c6] (0.000,25.000) -- (4.762,19.048) -- (6.250,18.750) -- (6.250,18.750) -- cycle;
\draw [fill=c7] (0.000,25.000) -- (6.250,18.750) -- (9.091,18.182) -- (9.091,18.182) -- cycle;
\draw [fill=c8] (0.000,25.000) -- (9.091,18.182) -- (7.143,21.429) -- (7.143,21.429) -- cycle;
\draw [fill=c9] (0.000,25.000) -- (7.143,21.429) -- (5.882,23.529) -- (5.882,23.529) -- cycle;
\draw [fill=c10] (0.000,25.000) -- (5.882,23.529) -- (0.000,33.333) -- (0.000,33.333) -- cycle;
\draw [fill=c11] (0.000,33.333) -- (5.882,23.529) -- (7.692,23.077) -- (7.692,23.077) -- cycle;
\draw [fill=c12] (0.000,33.333) -- (7.692,23.077) -- (11.111,22.222) -- (11.111,22.222) -- cycle;
\draw [fill=c13] (0.000,33.333) -- (11.111,22.222) -- (9.091,27.273) -- (9.091,27.273) -- cycle;
\draw [fill=c14] (0.000,33.333) -- (9.091,27.273) -- (7.692,30.769) -- (7.692,30.769) -- cycle;
\draw [fill=c15] (0.000,33.333) -- (7.692,30.769) -- (0.000,50.000) -- (0.000,50.000) -- cycle;
\draw [fill=c16] (0.000,50.000) -- (7.692,30.769) -- (10.000,30.000) -- (10.000,30.000) -- cycle;
\draw [fill=c17] (0.000,50.000) -- (10.000,30.000) -- (14.286,28.571) -- (14.286,28.571) -- cycle;
\draw [fill=c18] (0.000,50.000) -- (14.286,28.571) -- (12.500,37.500) -- (12.500,37.500) -- cycle;
\draw [fill=c19] (0.000,50.000) -- (12.500,37.500) -- (11.111,44.444) -- (11.111,44.444) -- cycle;
\draw [fill=c20] (0.000,50.000) -- (11.111,44.444) -- (0.000,100.000) -- (0.000,100.000) -- cycle;
\draw [fill=c21] (0.000,100.000) -- (11.111,44.444) -- (14.286,42.857) -- (14.286,42.857) -- cycle;
\draw [fill=c22] (0.000,100.000) -- (14.286,42.857) -- (20.000,40.000) -- (20.000,40.000) -- cycle;
\draw [fill=c23] (0.000,100.000) -- (20.000,40.000) -- (33.333,33.333) -- (33.333,33.333) -- cycle;
\draw [fill=c24] (0.000,100.000) -- (33.333,33.333) -- (100.000,0.000) -- (100.000,0.000) -- cycle;
\draw [fill=c25] (4.762,19.048) -- (5.882,17.647) -- (8.333,16.667) -- (6.250,18.750) -- (6.250,18.750) -- cycle;
\draw [fill=c26] (5.882,17.647) -- (7.692,15.385) -- (12.500,12.500) -- (8.333,16.667) -- (8.333,16.667) -- cycle;
\draw [fill=c27] (5.882,23.529) -- (7.143,21.429) -- (10.000,20.000) -- (7.692,23.077) -- (7.692,23.077) -- cycle;
\draw [fill=c28] (6.250,18.750) -- (8.333,16.667) -- (10.526,15.789) -- (9.091,18.182) -- (9.091,18.182) -- cycle;
\draw [fill=c29] (7.143,21.429) -- (9.091,18.182) -- (11.765,17.647) -- (10.000,20.000) -- (10.000,20.000) -- cycle;
\draw [fill=c30] (7.692,15.385) -- (11.111,11.111) -- (15.385,7.692) -- (12.500,12.500) -- (12.500,12.500) -- cycle;
\draw [fill=c31] (7.692,23.077) -- (10.000,20.000) -- (12.500,18.750) -- (11.111,22.222) -- (11.111,22.222) -- cycle;
\draw [fill=c32] (7.692,30.769) -- (9.091,27.273) -- (12.500,25.000) -- (10.000,30.000) -- (10.000,30.000) -- cycle;
\draw [fill=c33] (8.333,16.667) -- (12.500,12.500) -- (10.526,15.789) -- (10.526,15.789) -- cycle;
\draw [fill=c34] (9.091,18.182) -- (10.526,15.789) -- (14.286,14.286) -- (14.286,14.286) -- cycle;
\draw [fill=c35] (9.091,18.182) -- (14.286,14.286) -- (11.765,17.647) -- (11.765,17.647) -- cycle;
\draw [fill=c36] (9.091,27.273) -- (11.111,22.222) -- (14.286,21.429) -- (12.500,25.000) -- (12.500,25.000) -- cycle;
\draw [fill=c37] (10.000,20.000) -- (11.765,17.647) -- (13.043,17.391) -- (12.500,18.750) -- (12.500,18.750) -- cycle;
\draw [fill=c38] (10.000,30.000) -- (12.500,25.000) -- (15.385,23.077) -- (14.286,28.571) -- (14.286,28.571) -- cycle;
\draw [fill=c39] (10.526,15.789) -- (12.500,12.500) -- (15.789,10.526) -- (14.286,14.286) -- (14.286,14.286) -- cycle;
\draw [fill=c2] (11.111,11.111) -- (20.000,0.000) -- (15.385,7.692) -- (15.385,7.692) -- cycle;
\draw [fill=c40] (11.111,22.222) -- (12.500,18.750) -- (16.667,16.667) -- (16.667,16.667) -- cycle;
\draw [fill=c41] (11.111,22.222) -- (16.667,16.667) -- (14.286,21.429) -- (14.286,21.429) -- cycle;
\draw [fill=c42] (11.111,44.444) -- (12.500,37.500) -- (16.667,33.333) -- (14.286,42.857) -- (14.286,42.857) -- cycle;
\draw [fill=c43] (11.765,17.647) -- (14.286,14.286) -- (13.043,17.391) -- (13.043,17.391) -- cycle;
\draw [fill=c26] (12.500,12.500) -- (15.385,7.692) -- (17.647,5.882) -- (16.667,8.333) -- (16.667,8.333) -- cycle;
\draw [fill=c33] (12.500,12.500) -- (16.667,8.333) -- (15.789,10.526) -- (15.789,10.526) -- cycle;
\draw [fill=c44] (12.500,18.750) -- (13.043,17.391) -- (16.667,16.667) -- (16.667,16.667) -- cycle;
\draw [fill=c45] (12.500,25.000) -- (14.286,21.429) -- (15.789,21.053) -- (15.385,23.077) -- (15.385,23.077) -- cycle;
\draw [fill=c46] (12.500,37.500) -- (14.286,28.571) -- (18.182,27.273) -- (16.667,33.333) -- (16.667,33.333) -- cycle;
\draw [fill=c47] (13.043,17.391) -- (14.286,14.286) -- (17.391,13.043) -- (16.667,16.667) -- (16.667,16.667) -- cycle;
\draw [fill=c34] (14.286,14.286) -- (15.789,10.526) -- (18.182,9.091) -- (18.182,9.091) -- cycle;
\draw [fill=c35] (14.286,14.286) -- (18.182,9.091) -- (17.647,11.765) -- (17.647,11.765) -- cycle;
\draw [fill=c43] (14.286,14.286) -- (17.647,11.765) -- (17.391,13.043) -- (17.391,13.043) -- cycle;
\draw [fill=c48] (14.286,21.429) -- (16.667,16.667) -- (15.789,21.053) -- (15.789,21.053) -- cycle;
\draw [fill=c49] (14.286,28.571) -- (15.385,23.077) -- (20.000,20.000) -- (20.000,20.000) -- cycle;
\draw [fill=c50] (14.286,28.571) -- (20.000,20.000) -- (18.182,27.273) -- (18.182,27.273) -- cycle;
\draw [fill=c51] (14.286,42.857) -- (16.667,33.333) -- (25.000,25.000) -- (20.000,40.000) -- (20.000,40.000) -- cycle;
\draw [fill=c3] (15.385,7.692) -- (20.000,0.000) -- (17.647,5.882) -- (17.647,5.882) -- cycle;
\draw [fill=c52] (15.385,23.077) -- (15.789,21.053) -- (20.000,20.000) -- (20.000,20.000) -- cycle;
\draw [fill=c28] (15.789,10.526) -- (16.667,8.333) -- (18.750,6.250) -- (18.182,9.091) -- (18.182,9.091) -- cycle;
\draw [fill=c53] (15.789,21.053) -- (16.667,16.667) -- (21.053,15.789) -- (20.000,20.000) -- (20.000,20.000) -- cycle;
\draw [fill=c25] (16.667,8.333) -- (17.647,5.882) -- (19.048,4.762) -- (18.750,6.250) -- (18.750,6.250) -- cycle;
\draw [fill=c44] (16.667,16.667) -- (17.391,13.043) -- (18.750,12.500) -- (18.750,12.500) -- cycle;
\draw [fill=c40] (16.667,16.667) -- (18.750,12.500) -- (22.222,11.111) -- (22.222,11.111) -- cycle;
\draw [fill=c41] (16.667,16.667) -- (22.222,11.111) -- (21.429,14.286) -- (21.429,14.286) -- cycle;
\draw [fill=c48] (16.667,16.667) -- (21.429,14.286) -- (21.053,15.789) -- (21.053,15.789) -- cycle;
\draw [fill=c54] (16.667,33.333) -- (18.182,27.273) -- (25.000,25.000) -- (25.000,25.000) -- cycle;
\draw [fill=c37] (17.391,13.043) -- (17.647,11.765) -- (20.000,10.000) -- (18.750,12.500) -- (18.750,12.500) -- cycle;
\draw [fill=c4] (17.647,5.882) -- (20.000,0.000) -- (19.048,4.762) -- (19.048,4.762) -- cycle;
\draw [fill=c29] (17.647,11.765) -- (18.182,9.091) -- (21.429,7.143) -- (20.000,10.000) -- (20.000,10.000) -- cycle;
\draw [fill=c7] (18.182,9.091) -- (18.750,6.250) -- (25.000,0.000) -- (25.000,0.000) -- cycle;
\draw [fill=c8] (18.182,9.091) -- (25.000,0.000) -- (21.429,7.143) -- (21.429,7.143) -- cycle;
\draw [fill=c55] (18.182,27.273) -- (20.000,20.000) -- (27.273,18.182) -- (25.000,25.000) -- (25.000,25.000) -- cycle;
\draw [fill=c6] (18.750,6.250) -- (19.048,4.762) -- (25.000,0.000) -- (25.000,0.000) -- cycle;
\draw [fill=c31] (18.750,12.500) -- (20.000,10.000) -- (23.077,7.692) -- (22.222,11.111) -- (22.222,11.111) -- cycle;
\draw [fill=c5] (19.048,4.762) -- (20.000,0.000) -- (25.000,0.000) -- (25.000,0.000) -- cycle;
\draw [fill=c27] (20.000,10.000) -- (21.429,7.143) -- (23.529,5.882) -- (23.077,7.692) -- (23.077,7.692) -- cycle;
\draw [fill=c52] (20.000,20.000) -- (21.053,15.789) -- (23.077,15.385) -- (23.077,15.385) -- cycle;
\draw [fill=c49] (20.000,20.000) -- (23.077,15.385) -- (28.571,14.286) -- (28.571,14.286) -- cycle;
\draw [fill=c50] (20.000,20.000) -- (28.571,14.286) -- (27.273,18.182) -- (27.273,18.182) -- cycle;
\draw [fill=c56] (20.000,40.000) -- (25.000,25.000) -- (40.000,20.000) -- (33.333,33.333) -- (33.333,33.333) -- cycle;
\draw [fill=c45] (21.053,15.789) -- (21.429,14.286) -- (25.000,12.500) -- (23.077,15.385) -- (23.077,15.385) -- cycle;
\draw [fill=c9] (21.429,7.143) -- (25.000,0.000) -- (23.529,5.882) -- (23.529,5.882) -- cycle;
\draw [fill=c36] (21.429,14.286) -- (22.222,11.111) -- (27.273,9.091) -- (25.000,12.500) -- (25.000,12.500) -- cycle;
\draw [fill=c12] (22.222,11.111) -- (23.077,7.692) -- (33.333,0.000) -- (33.333,0.000) -- cycle;
\draw [fill=c13] (22.222,11.111) -- (33.333,0.000) -- (27.273,9.091) -- (27.273,9.091) -- cycle;
\draw [fill=c11] (23.077,7.692) -- (23.529,5.882) -- (33.333,0.000) -- (33.333,0.000) -- cycle;
\draw [fill=c38] (23.077,15.385) -- (25.000,12.500) -- (30.000,10.000) -- (28.571,14.286) -- (28.571,14.286) -- cycle;
\draw [fill=c10] (23.529,5.882) -- (25.000,0.000) -- (33.333,0.000) -- (33.333,0.000) -- cycle;
\draw [fill=c32] (25.000,12.500) -- (27.273,9.091) -- (30.769,7.692) -- (30.000,10.000) -- (30.000,10.000) -- cycle;
\draw [fill=c54] (25.000,25.000) -- (27.273,18.182) -- (33.333,16.667) -- (33.333,16.667) -- cycle;
\draw [fill=c51] (25.000,25.000) -- (33.333,16.667) -- (42.857,14.286) -- (40.000,20.000) -- (40.000,20.000) -- cycle;
\draw [fill=c14] (27.273,9.091) -- (33.333,0.000) -- (30.769,7.692) -- (30.769,7.692) -- cycle;
\draw [fill=c46] (27.273,18.182) -- (28.571,14.286) -- (37.500,12.500) -- (33.333,16.667) -- (33.333,16.667) -- cycle;
\draw [fill=c17] (28.571,14.286) -- (30.000,10.000) -- (50.000,0.000) -- (50.000,0.000) -- cycle;
\draw [fill=c18] (28.571,14.286) -- (50.000,0.000) -- (37.500,12.500) -- (37.500,12.500) -- cycle;
\draw [fill=c16] (30.000,10.000) -- (30.769,7.692) -- (50.000,0.000) -- (50.000,0.000) -- cycle;
\draw [fill=c15] (30.769,7.692) -- (33.333,0.000) -- (50.000,0.000) -- (50.000,0.000) -- cycle;
\draw [fill=c42] (33.333,16.667) -- (37.500,12.500) -- (44.444,11.111) -- (42.857,14.286) -- (42.857,14.286) -- cycle;
\draw [fill=c23] (33.333,33.333) -- (40.000,20.000) -- (100.000,0.000) -- (100.000,0.000) -- cycle;
\draw [fill=c19] (37.500,12.500) -- (50.000,0.000) -- (44.444,11.111) -- (44.444,11.111) -- cycle;
\draw [fill=c22] (40.000,20.000) -- (42.857,14.286) -- (100.000,0.000) -- (100.000,0.000) -- cycle;
\draw [fill=c21] (42.857,14.286) -- (44.444,11.111) -- (100.000,0.000) -- (100.000,0.000) -- cycle;
\draw [fill=c20] (44.444,11.111) -- (50.000,0.000) -- (100.000,0.000) -- (100.000,0.000) -- cycle;
\draw [thick] (0.000,0.000) -- (100.000,0.000) -- (0.000,100.000) -- (0.000,100.000) -- cycle;
\end{tikzpicture}

                \caption{$IT(4)$ ($71$ triangles, $34$ quadrilaterals)}
                \label{FigIT4}
        \end{minipage}
\end{figure}


\begin{figure}[!ht]
\begin{tikzpicture}

\draw (0,0) -- (7,0);
\draw (0,0) -- (6,3);
\draw (0,0) -- (3,3);
\draw (0,0) -- (0,3);
\draw (0,3) -- (3,0);
\draw (0,3) -- (7,3);
\draw (0,3) -- (6,0);
\draw (3,0) -- (3,3);
\draw (3,0) -- (6,3);
\draw (3,3) -- (6,0);
\draw (6,0) -- (6,3);

\draw (9+0,0) -- (9+0,6);
\draw (9+0,0) -- (9+6,0);
\draw (9+0,6) -- (9+6,0);
\draw (9+0,6) -- (9+3,0);
\draw (9+0,6) -- (9+2,0);
\draw (9+0,3) -- (9+6,0);
\draw (9+0,3) -- (9+3,0);
\draw (9+0,3) -- (9+2,0);
\draw (9+0,2) -- (9+6,0);
\draw (9+0,2) -- (9+3,0);
\draw (9+0,2) -- (9+2,0);

\draw[->][ultra thick] (7,1.5) -- (8,1.5);

\node[below] at (0,0) {A};
\node[below] at (3,0) {B};
\node[below] at (6,0) {C};
\node[below] at (7,0) {D};
\node[above] at (0,3) {E};
\node[above] at (3,3) {F};
\node[above] at (6,3) {G};
\node[above] at (7,3) {D};

\node[left] at (9,6) {E};
\node[left] at (9,3) {F};
\node[left] at (9,2) {G};
\node[below] at (8.8,0) {D};
\node[below] at (11,0) {C};
\node[below] at (12,0) {B};
\node[below] at (15,0) {A};

\end{tikzpicture}
\caption{Illustrating the map \eqn{EqMaxMap} from $BC(1,2)$ to $IT(2)$.}
\label{FigMaxMap2}
\end{figure}
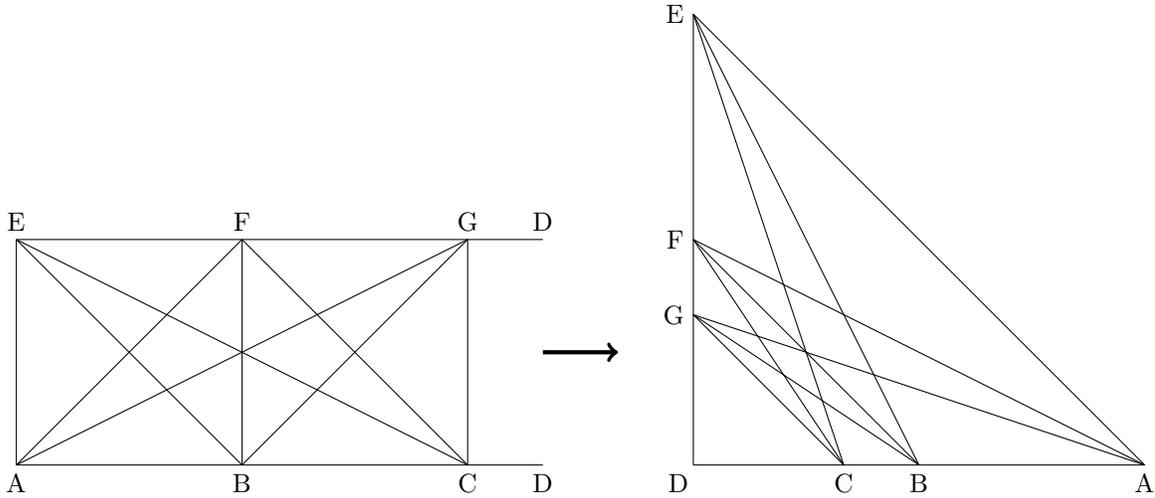



Alekseyev pointed out that if we take the boundary points of $BC(1,n)$ to be the
points $(i,0)$ and $(i,1)$ for $i = 0,\ldots,n$, then the map
\beql{EqMaxMap}
(x,y) ~\longmapsto~ \left( \frac{1-y}{x+1}, \frac{y}{x+1} \right)\,,
\eeq
maps $BC(1,n)$ onto $IT(n)$ minus the node and cell at the origin.
Figure~\ref{FigMaxMap2} illustrates this in the case $n=2$.
The six boundary nodes $A, B, C, E, F, G$ of $BC(1,2)$
are mapped to six of the seven boundary nodes of $IT(2)$.
The point $D$,  the point at infinity on the positive $x$ axis 
(not part of $BC(1,2)$), is mapped to the origin in $IT(2)$.
The  region $D,C,G,D$ to the right of $BC(1,2)$ is mapped to the triangular cell $D, G, C, D$ at the origin 
in $IT(2)$.

A similar thing happens in the general case:  $IT(n)$
always has one more node than $BC(1,n)$, two more edges, and one more cell.
When these adjustments are made to the formulas in Theorem~\ref{ThVRBC1},
we obtain the formulas in Theorem~13 of \cite{ABZ15}.
The counts for nodes, edges, and cells in $IT(n)$ are given in
\seqnum{A332632}, \seqnum{A332360}, and \seqnum{A332358}.

However, Alekseyev (personal communication) also pointed out that Theorem~13 of \cite{ABZ15} mentions
an additional property of $IT(n)$---and hence of $BC(1,n)$---that seems to have been overlooked in \cite{Leg09} and \cite{Gri10}:

\begin{thm} (Alekseyev et al.~\cite{ABZ15}):\label{ThABZ}
The cells in $IT(n)$ and hence $BC(1,n)$ are either triangles or quadrilaterals.
\end{thm}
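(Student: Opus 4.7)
The plan is to transfer the problem to $IT(n)$ via the projective map $\phi$ of \eqref{EqMaxMap}, exploit the clean algebraic form that chords take in $IT(n)$, and then invoke a teaching-set theorem for two-dimensional linear threshold functions. Since $\phi$ preserves lines and incidences, it sends cells of $BC(1,n)$ to cells of $IT(n)$ minus the single extra cell at the origin, with the same number of sides; and that origin cell is itself a triangle. So the bound for $BC(1,n)$ reduces to the bound for $IT(n)$.

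A short calculation shows that in $IT(n)$ the chord joining $(0,1/a)$ on the vertical side to $(1/b,0)$ on the horizontal side is the line $bx+ay=1$, for $a,b\in\{1,\ldots,n+1\}$; together with the two axis chords on $x=0$ and $y=0$ these exhaust the chord set. For each cell $F\subset IT(n)$ I would assign the dichotomy
\[
\delta_F(a,b) \;=\; \operatorname{sign}(1 - b x_0 - a y_0), \qquad (a,b)\in\{1,\ldots,n+1\}^2,
\]
for any interior point $(x_0,y_0)\in F$. This is well-defined because no chord crosses the interior of $F$, and distinct cells induce distinct dichotomies.

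The key observation is that a chord $bx+ay=1$ contributes a side of $F$ precisely when the lattice point $(a,b)$ is \emph{essential} for $\delta_F$, meaning that no linear dichotomy of $\{1,\ldots,n+1\}^2$ other than $\delta_F$ itself agrees with $\delta_F$ off the set of essential points. Equivalently, the slanted edges of $F$ are in bijection with a minimal teaching set for $\delta_F$ in the concept class of two-dimensional linear threshold functions on this lattice. I would then invoke the theorem of Shinohara--Arimura~\cite{SZ98} and Zolotykh~\cite{Zol01}, which bounds the teaching dimension of this class by $4$, immediately capping the number of slanted edges of $F$ at $4$. A brief supplementary argument covers cells that touch the axis lines $x=0$ or $y=0$: such a cell trades a slanted edge for an axis edge (the corresponding lattice constraint becomes redundant ``at infinity''), so the total edge count never exceeds $4$.

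The main obstacle is this last appeal to teaching-set theory, which is non-elementary and borrows heavily from learning theory. A natural direct attack would use convexity of cells together with the fact that, along the left (or right) boundary chain of a cell in $BC(1,n)$, the integer slopes $dx/dy$ of successive chord edges are strictly monotone; a cell with $\ge 5$ sides would then force a strictly monotone sequence of at least three such integer slopes along a single chain. This rules out many configurations but, as flagged in Open Problem~\ref{OP1}, has so far resisted completion into a purely geometric argument.
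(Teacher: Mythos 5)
Your proposal follows essentially the same route as the paper, which does not prove Theorem~\ref{ThABZ} itself but imports it from \cite{ABZ15}: the reduction of $BC(1,n)$ to $IT(n)$ via \eqref{EqMaxMap}, the identification of the chords with the lines $bx+ay=1$, and the correspondence between the slanted edges of a cell and the essential points of the associated threshold dichotomy, capped by the minimal-teaching-set bound of \cite{SZ98, Zol01} (note those authors are Shevchenko and Zolotykh, not Shinohara--Arimura). The only places your sketch is thinner than \cite{ABZ15} are the treatment of cells abutting the legs $x=0$ and $y=0$ and the precise statement of the teaching-set theorem, both of which remain black boxes here exactly as they do in the paper, so this is not a new (and in particular not a geometric) proof, and Open Problem~\ref{OP1} stands.
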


That is, no cell  in $BC(1,n)$ has five or more edges. The proof in \cite{ABZ15} depends on a theorem
 about teaching sets for threshold function~\cite{SZ98, Zol01}. No other proof 
seems to be known.  We therefore state:

\begin{problem}\label{OP1}
Find a purely geometrical proof of Theorem~\ref{ThABZ}.
\end{problem}


\section{The cells in \texorpdfstring{$BC(1,n)$}{BC(1,n)}}\label{SecBCcells}

From Theorems~\ref{ThVRBC1} and \ref{ThABZ}, we can determine the numbers of
triangular  and quadrilateral cells in $BC(1,n)$
(sequences  \seqnum{A324042}  and  \seqnum{A324043}).

\begin{thm}\label{ThTQBC}
The $\sC(1,n)$ cells in $BC(1,n)$ are made up of
\beql{EqTBC1}
T(n) = 2V(n,n,2) + 2n(n+1)
\eeq
triangles and
\beql{EqQBC1}
Q(n) = V(n,n,1)-2V(n,n,2) -n^2
\eeq
quadrilaterals.
\end{thm}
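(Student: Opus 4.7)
The plan is to derive the two formulas by combining Theorem~\ref{ThABZ} (every cell is a triangle or a quadrilateral) with a simple two-equation linear system coming from the cell count and the edge--cell incidence count.

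First I would let $T$ denote the number of triangular cells and $Q$ the number of quadrilateral cells. Theorem~\ref{ThABZ} immediately gives
\[
T + Q \;=\; \sC(1,n) \;=\; n^2 + 2n + V(n,n,1),
\]
using the formula in Theorem~\ref{ThVRBC1}. This is the first equation.

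The second equation comes from counting edge--cell incidences. Each interior edge of $BC(1,n)$ lies on the boundary of exactly two cells, while each edge on the boundary of the rectangle lies on the boundary of exactly one cell. The number of boundary edges is easy to identify: the two long sides contribute $n$ edges each (from the $n-1$ inserted nodes plus the two endpoints), and the two short sides contribute a single edge each, for a total of $B = 2n+2$ boundary edges. Therefore
\[
3T + 4Q \;=\; 2\,\sE(1,n) - B \;=\; 2\,\sE(1,n) - 2n - 2.
\]
Now I would use Euler's formula \eqn{Eq1} together with Theorem~\ref{ThVRBC1} to compute $\sE(1,n)$, obtaining
\[
\sE(1,n) \;=\; \sN(1,n) + \sC(1,n) - 1 \;=\; n^2 + 4n + 1 + 2V(n,n,1) - V(n,n,2),
\]
so that
\[
3T + 4Q \;=\; 2n^2 + 6n + 4V(n,n,1) - 2V(n,n,2).
\]

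Finally, solving the two-by-two linear system by computing $4(T+Q) - (3T+4Q) = T$ yields
\[
T \;=\; 2n^2 + 2n + 2V(n,n,2) \;=\; 2V(n,n,2) + 2n(n+1),
\]
and substituting back gives $Q = \sC(1,n) - T = V(n,n,1) - 2V(n,n,2) - n^2$, which are exactly \eqn{EqTBC1} and \eqn{EqQBC1}. There is no real obstacle here once Theorem~\ref{ThABZ} is in hand; the entire argument is linear algebra once the boundary edge count $B = 2n+2$ is noted. The only nontrivial ingredient is Theorem~\ref{ThABZ} itself, whose proof (via teaching sets for threshold functions) is already cited, and whose desired elementary proof is the content of Open Problem~\ref{OP1}.
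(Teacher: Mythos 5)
Your proposal is correct and is essentially identical to the paper's proof: both use $T+Q=\sC(1,n)$ from Theorem~\ref{ThABZ} together with the incidence count $3T+4Q+(2n+2)=2\sE(1,n)=2(\sN(1,n)+\sC(1,n)-1)$ and then solve the resulting linear system using the formulas of Theorem~\ref{ThVRBC1}. You simply carry out the algebra more explicitly than the paper does.
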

\begin{proof}
The sum $3T(n)+4Q(n)$ double-counts the edges in $BC(1,n)$ except that the $2n+2$ boundary edges are counted only once. Therefore
\beql{EqTQ2}
3T(n)+4Q(n)+(2n+2) = 2\sE(1,n) = 2(\sN(1,n) + \sC(1,n) -1)\,,
\eeq
and of course by Theorem~\ref{ThABZ}, $T(n)+Q(n)=\sC(1,n)$.
The proof is completed by solving these two equations  for $T(n)$ and $Q(n)$ and using \eqn{EqVBC1}, \eqn{EqRBC1}.
\end{proof}

Figures~\ref{FigBC11bw}, \ref{FigBC12}, \ref{FigBC13}, and \ref{FigBC14} show 
the triangles and quadrilaterals for  $n=1,...,4$.


One way to attack Open Problem~\ref{OP1} is to try to understand the distribution of
cells in each of the $n$ squares of $BC(1,n)$. Let $t_{n,k}$, $q_{n,k}$, and $c_{n,k}$ 
denote the numbers of triangles, quadrilaterals, and cells in the $k$-th square of
$BC(1,n)$ for $1 \le k \le n$ (so $t_{n,k}+q_{n,k}=c_{n,k}$ and
$\sum_k c_{n,k} = \sC(1,n)$). From Fig.~\ref{FigBC12}, for example, we see that $t_{1,1}=t_{1,2}=7$,
$q_{1,1}=q_{1,2}=1$, and $c_{1,1}=c_{1,2}=8$.

The two end squares of $BC(1,n)$ are easily understood, and for future reference we state the result as:

\begin{thm}\label{ThBCcnr}
For $n \ge 2$, the two end squares of $BC(1,n)$ both contain $2n+3$ triangles and $2n-3$
quadrilaterals.
\end{thm}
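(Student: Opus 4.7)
The plan is, by left--right symmetry, to analyze only the leftmost unit square $S$ of $BC(1,n)$. The argument has three stages: list the chords crossing $S$, count nodes and edges of the induced planar subdivision and invoke Euler for the total cell count, and finally split triangles from quadrilaterals by invoking Theorem~\ref{ThABZ} together with an edge--cell double count.

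Label the boundary nodes $B_i=(i,0)$ and $T_i=(i,1)$. Since the $x$-range of the chord $B_jT_k$ is $[\min(j,k),\max(j,k)]$, only chords with $\min(j,k)=0$ can enter $\mathrm{int}(S)$, while chords lying along the top or bottom of the rectangle contribute only single edges to $\partial S$. Thus the chords crossing $\mathrm{int}(S)$ are exactly the two main diagonals $B_0T_1$, $T_0B_1$ together with $B_0T_k$, $T_0B_k$ for $k=2,\ldots,n$. A direct calculation gives $B_0T_j\cap T_0B_k=\bigl(jk/(j+k),\,k/(j+k)\bigr)$; this point lies in $\mathrm{int}(S)$ exactly when $1/j+1/k>1$, and it lies on the right edge $x=1$ iff $(j,k)=(2,2)$. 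This inventory produces $1+2(n-1)=2n-1$ interior nodes of $S$, together with $2n-3$ distinct exit points on $B_1T_1$ (the only coincidence being $(1,1/2)$, shared by $B_0T_2$ and $T_0B_2$), so with the four corners $|V|=4n$.

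For edges: each main diagonal meets $n$ of the listed chords in $\mathrm{int}(S)$ and so contributes $n+1$ edges; each of the $2(n-1)$ remaining listed chords contributes $2$ edges (one interior node plus one exit point on $B_1T_1$); and the four sides of $S$ contribute $3+(2n-2)$ edges. Hence $|E|=2(n+1)+4(n-1)+(2n+1)=8n-1$. Euler's formula applied to $S$, viewed as a connected planar graph, then gives $|F|-1=4n$ interior cells. By Theorem~\ref{ThABZ} every cell of $S$ is a triangle or a quadrilateral, so writing $T$ and $Q$ for the two counts one has $T+Q=4n$. Double-counting edge--cell incidences, where the $2n+1$ edges on $\partial S$ each lie in one cell and the remaining $6n-2$ interior edges each lie in two, yields $3T+4Q=(2n+1)+2(6n-2)=14n-3$. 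Solving the two linear equations gives $T=2n+3$ and $Q=2n-3$.

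The main obstacle is the node bookkeeping in the second stage: verifying that $(1,1/2)$ is the only exit coincidence on $B_1T_1$ and that no three of the chords $B_0T_j$, $T_0B_k$ concur at a common interior point of $S$. Both follow from a short argument showing that the intersection $\bigl(jk/(j+k),\,k/(j+k)\bigr)$ determines $(j,k)$ uniquely from its coordinate ratio; once this is in hand, the remainder of the proof is routine.
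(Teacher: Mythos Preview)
Your proof is correct. The paper itself states Theorem~\ref{ThBCcnr} without proof, saying only that the end squares are ``easily understood''; your argument fills this gap cleanly.

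A few remarks on the details. Your identification of the chords meeting $\mathrm{int}(S)$ is right: any chord $B_iT_j$ has $x$-range $[\min(i,j),\max(i,j)]$, so only those with $\min(i,j)=0$ can enter, and the horizontal chords $B_iB_j$, $T_iT_j$ lie on $\partial S$. The interior-node count $2n-1$ is justified because the condition $1/j+1/k>1$ with $j,k\ge 1$ forces $j=1$ or $k=1$, giving the $2n-1$ pairs $(1,k)$ and $(j,1)$; the resulting points lie on the two main diagonals and are manifestly distinct except at the centre $(1/2,1/2)$. The exit-point bookkeeping is likewise straightforward: $1/j = 1-1/k$ with $j,k\ge 2$ integers forces $j=k=2$. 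Your uniqueness remark at the end (that $(x,y)$ determines $j=x/y$ and $k=x/(1-y)$) handles the no-three-concur check, though the simpler observation that all interior chords pass through $B_0$ or $T_0$ already forces at most one chord from each pencil through any interior point.

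The one dependency worth flagging is that you invoke Theorem~\ref{ThABZ} to rule out pentagons and higher. This is legitimate in the context of the paper, but it means your proof is not self-contained: the paper itself notes that the only known proof of Theorem~\ref{ThABZ} goes through threshold functions (Open Problem~\ref{OP1}). A direct case analysis of the cells in $S$---listing the $2(n+1)$ triangles with apex $B_0$ or $T_0$, the two triangles at the right edge, and so on---would avoid this dependency and is presumably what the authors had in mind by ``easily understood'', at the cost of more bookkeeping than your Euler-plus-double-count approach.
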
.

\begin{table}[!ht] 
\caption{Number $t_{n,k}$ of triangles  in $k$-th square in $BC(1,n)$ (\seqnum{A333286}). }\label{TabWangt}
$$
   \begin{array}{c|rrrrrrrrrr}
n \backslash k & 1 & 2 & 3 & 4 & 5 & 6 & 7 & 8 & 9 & 10   \\
\hline
1 &  4 &  & & & & & & & & \\
2 &  7 &  7 &  & & & & & & &  \\
3 &  9 &  14 &  9 &  & & & & & &  \\
4 &  11 &  24 &  24 &  11 &  & & & & &  \\
5 &  13 &  30 &  38 &  30 &  13 &  & & & & \\
6 &  15 &  38 &  60 &  60 &  38 &  15 &  & & & \\
7 &  17 &  44 &  76 &  86 &  76 &  44 &  17   & & & \\
8 &  19 &  52 &  92 &  120 &  120 &  92 &  52 &  19   & & \\
9 &  21 &  58 &  106 &  146 &  158 &  146 &  106 &  58 &  21 &   \\
10 &  23 &  66 &  126 &  178 &  216 &  216 &  178 &  126 &  66 &  23   
   \end{array}
 $$
 \end{table}

\begin{table}[!ht] 
\caption{Number $q_{n,k}$ of quadrilaterals  in $k$-th square  in $BC(1,n)$ (\seqnum{A333287}). }\label{TabWangq}
$$
   \begin{array}{c|rrrrrrrrrr}
n \backslash k & 1 & 2 & 3 & 4 & 5 & 6 & 7 & 8 & 9 & 10   \\
\hline
1 &  0 & &&&&&&&& \\
2 &  1 &  1 & &&&&&&& \\
3 &  3 &  8 &  3 & &&&&&& \\
4 &  5 &  12 &  12 &  5 & &&&&& \\
5 &  7 &  22 &  32 &  22 &  7 &  &&&& \\
6 &  9 &  28 &  40 &  40 &  28 &  9 &  &&& \\
7 &  11 &  38 &  58 &  74 &  58 &  38 &  11 &  & & \\
8 &  13 &  46 &  74 &  98 &  98 &  74 &  46 &  13 &  & \\
9 &  15 &  58 &  92 &  130 &  152 &  130 &  92 &  58 &  15 &  \\
10 &  17 &  68 &  104 &  150 &  180 &  180 &  150 &  104 &  68 &  17 \\
   \end{array}
 $$
 \end{table}

\begin{table}[!ht] 
\caption{Total number $c_{n,k}$ of cells  in $k$-th square  in $BC(1,n)$ (\seqnum{A333288}). }\label{TabWangc}
$$
   \begin{array}{c|rrrrrrrrrr}
n \backslash k & 1 & 2 & 3 & 4 & 5 & 6 & 7 & 8 & 9 & 10   \\
\hline
1 &  4 &  &&&&&&&& \\
2 &  8 &  8 &  &&&&&&& \\
3 &  12 &  22 &  12 &  &&&&&& \\
4 &  16 &  36 &  36 &  16 &  &&&&& \\
5 &  20 &  52 &  70 &  52 &  20 &  &&&& \\
6 &  24 &  66 &  100 &  100 &  66 &  24 &  &&& \\
7 &  28 &  82 &  134 &  160 &  134 &  82 &  28 &  && \\
8 &  32 &  98 &  166 &  218 &  218 &  166 &  98 &  32 & & \\
9 &  36 &  116 &  198 &  276 &  310 &  276 &  198 &  116 &  36 & \\
10 &  40 &  134 &  230 &  328 &  396 &  396 &  328 &  230 &  134 &  40 
   \end{array}
 $$
 \end{table}

Tables~\ref{TabWangt}, \ref{TabWangq}, and \ref{TabWangc} show the values of $t_{n,k}$,
$q_{n,k}$, and $c_{n,k}$ for $n \le 10$.  More extensive tables,  for $n \le 80$, are given
in entries \seqnum{A333286}, \seqnum{A333287}, \seqnum{A333288}.  
However, even with $80$ rows of data, we have been unable to find formulas for these numbers.

There is certainly a lot of structure in these tables.
Using the Salvy-Zimmermann \emph{gfun} Maple program \cite{GFUN}, 
we attempted  to find generating functions
for the columns of these tables.  On the basis of admittedly little evidence, 
we make the following conjecture.

\begin{conj}\label{BC1conj1}
In all three of Tables~\ref{TabWangt}, \ref{TabWangq}, and \ref{TabWangc},
the $k$-th column for $k \ge 3$  has a rational generating function which can be written
with denominator $(1-x^{k-2})(1-x^{k-1})(1-x^k)$.
\end{conj}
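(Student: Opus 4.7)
The conjecture is equivalent to the claim that, for each fixed $k \ge 3$, the sequence $c_{n,k}$ (and separately $t_{n,k}$ and $q_{n,k}$) is a quasi-polynomial in $n$ of degree at most $2$ whose period divides $\operatorname{lcm}(k-2, k-1, k)$. My plan is to prove this by adapting the enumeration technique of Legendre \cite{Leg09} and Griffiths \cite{Gri10} from the whole of $BC(1,n)$ down to a single square.

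\textbf{Step 1: localize the Legendre-Griffiths count.} The first step is to adapt Griffiths's key observation, used in the proof of \eqref{EqRBC1}: since no interior chord of $BC(1,n)$ is horizontal, every cell inside the $k$-th square $S_k := [k-1,k] \times [0,1]$ has a unique topmost node. Hence $c_{n,k}$ equals the number of nodes of $BC(1,n)$ lying in $S_k$ other than those on its bottom edge, plus a simple linear boundary correction. The interior nodes of $S_k$ are chord-intersection points lying in the open square, which can be enumerated, after Poonen-Rubinstein, by summing over unordered pairs of chords whose intersection falls in $S_k$ and then subtracting an explicit correction for three-chord concurrencies. The refinement to $t_{n,k}$ and $q_{n,k}$ separately follows by combining this with Theorem~\ref{ThABZ} and the local analogues of \eqref{EqTBC1} and \eqref{EqQBC1}.

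\textbf{Step 2: pass to generating functions.} After Step 1, $c_{n,k}$ is expressed as a sum over quadruples of chord endpoints, with ``left'' parameters $a_i$ in the bounded set $\{0,\dots,k-1\}$ and ``right'' parameters $b_i$ in the growing set $\{k,\dots,n\}$, subject to linear inequalities (the intersection lies in $S_k$) and arithmetic conditions (prescribed gcds of endpoint differences, as in \cite{PoRu98, Leg09}). Each such sum has a manifestly rational generating function in $x$; the factors $(1-x^d)$ in its denominator come from the periods $d$ of the gcd constraints and from the denominators $d$ of the rational thresholds at which individual intersections enter or leave $S_k$ as $n$ increases. The conjecture is thus reduced to showing that only $d \in \{k-2,k-1,k\}$ ever contribute.

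\textbf{Step 3: identify the three moduli (the main obstacle).} The geometric reason we expect exactly these three moduli is that, among the chords that sweep $S_k$ as $n$ grows, the three controlling families are those anchored at $(k,\cdot)$, at $(k-1,\cdot)$, and at $(k-2,\cdot)$: their slopes have denominators $b-k$, $b-(k-1)$, $b-(k-2)$, producing the moduli $k$, $k-1$, $k-2$ when one tracks which chord passes through a given rational position inside $S_k$. Making this precise, and in particular ruling out the spurious smaller-period contributions that a naive upper bound on the denominator would admit, is the hard part. My plan for this step is to study the increment $c_{n+1,k} - c_{n,k}$ directly: passing from $n$ to $n+1$ adjoins $4k$ new chords (from the two new right-boundary points to the $2k$ old left-boundary points at $x \le k-1$), and I would try to show that the resulting change in the cell count of $S_k$ depends on $n$ only through $n \bmod k$, $n \bmod (k-1)$, and $n \bmod (k-2)$. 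The strong rigidity of the concurrency structure encoded in Theorem~\ref{ThABZ} (the absence of pentagons) should be essential here, and I suspect that substantive progress on this conjecture is intertwined with progress on Open Problem~\ref{OP1}.
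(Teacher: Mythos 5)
The statement you are trying to prove is stated in the paper only as a conjecture: the authors arrived at it by feeding $80$ rows of data to \emph{gfun} and explicitly say it rests ``on admittedly little evidence.'' There is no proof in the paper to compare against, so the only question is whether your argument actually closes the gap --- and it does not. Your Steps~2 and~3 are a research program, not a proof: the decisive claim, that the only moduli contributing to the denominator are $k-2$, $k-1$, and $k$, is precisely the content of the conjecture, and you leave it as ``the hard part'' that you ``would try to show.'' Note that a naive localization of the Legendre--Griffiths count does \emph{not} even give rationality for free in the way Step~2 suggests. The global formulas involve $V(n,n,1)$, a sum over coprime pairs, which is not a quasi-polynomial in $n$; what saves the fixed-$k$ column is that one parameter of each gcd condition becomes bounded, but the bound is roughly $k$, so the ``manifest'' denominator one obtains this way is more like $\prod_{d\le k}(1-x^d)$. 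Collapsing that to just three factors is the whole difficulty, and your increment argument for $c_{n+1,k}-c_{n,k}$ is only sketched.

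There are also two smaller inaccuracies worth fixing if you pursue this. First, in Step~1, Griffiths's topmost-node argument does not give ``cells in $S_k$ equal nodes in $S_k$ minus bottom-edge nodes plus a linear correction'': an interior node where $c$ chords meet is the topmost node of $c-1$ cells, so you must count nodes with multiplicity, which reintroduces exactly the concurrency data the authors cannot control (compare Open Problem~\ref{OPSIP}, where even the number of \emph{simple} interior intersections of $BC(1,n)$ has no known formula after $500$ terms). Second, the refinement to $t_{n,k}$ and $q_{n,k}$ ``by local analogues of \eqref{EqTBC1} and \eqref{EqQBC1}'' is not automatic, since the edge-counting identity \eqref{EqTQ2} does not localize cleanly to one square (edges on the vertical chords $x=k-1$ and $x=k$ are shared between adjacent squares). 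Your instinct that the problem is tied to the rigidity behind Theorem~\ref{ThABZ} and Open Problem~\ref{OP1} seems sound, but as written the proposal establishes neither the rationality nor the specific denominator, so the conjecture remains open.
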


%

For example, column $3$ of Table~\ref{TabWangt}, the sequence $\{t_{n,3}\}$, appears
to have generating function
\beql{BCtriangles3}
x^3 ~ \frac{9+15 x+5 x^2 -2 x^3 -13 x^4 -11 x^5 -9 x^6 + 2 x^7 +8 x^8 -4 x^{10}   + 4 x^{12}}{(1-x)(1-x^2)(1-x^3)}\,.
 \eeq
It would be nice to know more about these quantities.


\section{The nodes in \texorpdfstring{$BC(1,n)$}{BC(1,n)}}\label{SecBCnodes}
Besides looking at the cells of $BC(1,n)$, it is also interesting to study the nodes.
For $n \ge 2$, $BC(1,n)$ has four boundary nodes of degree $n+1$ and
$2n-2$ boundary nodes of degree $n+2$.
An  interior node formed
   when $c$ chords (say) cross has degree $2c$.
   Let $v_{n,c}$ denote the number of interior nodes of degree $2c$, for $2 \le c \le n+1$.
   Table~\ref{TabWangi} shows the values of $v_{n,c}$ for $n \le 10$.
   A more extensive table, for $n \le 100$, is given in \seqnum{A333275}.
   
   \begin{table}[!ht] 
\caption{Number $v_{n,c}$ of interior nodes  in $BC(1,n)$ where $c$ chords cross (\seqnum{A333275}). }\label{TabWangi}
$$
   \begin{array}{c|rrrrrrrrrr}
n \backslash c  & 2 & 3 & 4 & 5 & 6 & 7 & 8 & 9 & 10  & 11  \\
\hline
1 & 1 &&&&&&&&& \\
2 & 6 &  1 &&&&&&&& \\
3 & 24 &  2 &  1 &&&&&&&\\
4 & 54 &  8 &  2 &  1 &&&&&& \\
5 & 124 &  18 &  2 &  2 &  1 &&&&&\\
6 & 214 &  32 &  10 &  2 &  2 &  1 &&&& \\
7 & 382 &  50 &  22 &  2 &  2 &  2 &  1 &&& \\
8 & 598 &  102 &  18 &  12 &  2 &  2 &  2 &  1 && \\
9 & 950 &  126 &  32 &  26 &  2 &  2 &  2 &  2 &  1  & \\
10 & 1334 &  198 &  62 &  20 &  14 &  2 &  2 &  2 &  2 &  1 
   \end{array}
 $$
 \end{table}
\begin{thm}\label{Thvxdeg}
For $n \ge 2$, the numbers $v_{n,c}$ satisfy:
\begin{align}
   \sum_{c=2}^{n+1} v_{n,c} ~+~ 2n+2  & ~=~ \sN(1,n)\,,  \label{Eqvxdeg1} \\
   \sum_{c=2}^{n+1} c \, v_{n,c} ~+~ n^2+4n+1  & ~=~ \sE(1,n)\,,   \label{Eqvxdeg2}\\
   \sum_{c=2}^{n+1}  \binom{c}{2}  \, v_{n,c}   & ~=~\binom{n+1}{2}^2\,.   \label{Eqvxdeg3}   
\end{align}
\end{thm}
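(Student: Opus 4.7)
All three identities are bookkeeping statements about the planar graph $BC(1,n)$: a partition of nodes, the handshake lemma, and a double count of chord crossings. My plan is to address them in this order, with the real content concentrated in the boundary-degree calculation for \eqref{Eqvxdeg2} and the combinatorics of crossings for \eqref{Eqvxdeg3}.

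For \eqref{Eqvxdeg1} and \eqref{Eqvxdeg2} I would start from the partition of nodes into boundary and interior. Since $m=1$, all $2(n+1)=2n+2$ boundary points lie on the two horizontal sides (four of them are corners), and every other node is an interior chord crossing of some multiplicity $c$; this immediately gives \eqref{Eqvxdeg1}. For \eqref{Eqvxdeg2} I would apply $\sum_v \deg(v) = 2\,\sE(1,n)$: an interior node where $c$ chords cross contributes $2c$. For the boundary I would count the distinct outgoing directions of the $2n+1$ incident chords at each boundary node. At a corner they collapse into $n+2$ edge-directions (one along each adjacent side, plus $n$ slants to the non-adjacent opposite-side points); at a non-corner boundary point they collapse into $n+3$ directions (two along the horizontal side, plus one to each of the $n+1$ points on the opposite side). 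Summing, the four corners and $2n-2$ middle boundary nodes contribute $4(n+2)+(2n-2)(n+3) = 2(n^2+4n+1)$, and dividing by $2$ gives \eqref{Eqvxdeg2}.

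For \eqref{Eqvxdeg3}, which is the genuinely combinatorial identity, the plan is a double count of unordered pairs of chords that meet at some interior node. On the one hand, each interior node of type $c$ contributes $\binom{c}{2}$ such pairs, producing the left-hand side. On the other hand, every chord of $BC(1,n)$ either lies along one of the four sides of the rectangle---and hence creates no interior crossing---or is a ``diagonal'' chord joining $(a,1)$ to $(b,0)$ for some $a,b\in\{0,1,\ldots,n\}$. A short parametric check shows that two diagonal chords $(a_1,1)$--$(b_1,0)$ and $(a_2,1)$--$(b_2,0)$ meet at a strictly interior point if and only if $(a_1-a_2)(b_1-b_2)<0$. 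Consequently each crossing pair corresponds uniquely to an independent choice of an unordered pair $\{a_1,a_2\}$ of top indices and an unordered pair $\{b_1,b_2\}$ of bottom indices, yielding exactly $\binom{n+1}{2}^{\,2}$ pairs of crossing chords.

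The main obstacle will be the bookkeeping in \eqref{Eqvxdeg2}, since one must recognize precisely which of the $2n+1$ chords leaving a boundary node are collinear and thus fuse into a single emanating edge-direction. By contrast, \eqref{Eqvxdeg3} becomes clean as soon as one isolates top-to-bottom chords as the only source of interior crossings, and \eqref{Eqvxdeg1} is essentially a tautology.
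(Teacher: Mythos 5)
Your proposal is correct; \eqn{Eqvxdeg1} is handled identically to the paper, \eqn{Eqvxdeg3} is essentially the paper's argument in different clothing, and \eqn{Eqvxdeg2} is proved by a genuinely different and arguably cleaner route. For \eqn{Eqvxdeg2} the paper counts cell--node incidences: each triangle contributes $3$ and each quadrilateral $4$, while a boundary node lies on (degree $-1$) cells and an interior node of multiplicity $c$ on $2c$ cells, giving $3T(n)+4Q(n)=4(n+1)+(2n-2)(n+2)+\sum_c 2c\,v_{n,c}$, which is then combined with \eqn{EqTQ2}; this leans on Theorem~\ref{ThABZ} (all cells are triangles or quadrilaterals). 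Your handshake-lemma computation with a direct count of edge directions at each boundary node avoids Theorem~\ref{ThABZ} entirely, and your boundary degrees ($n+2$ at a corner, $n+3$ at a non-corner boundary node, summing to $2(n^2+4n+1)$) are the correct ones --- note they differ by one from the degrees asserted in the text just before the theorem, which in fact records the number of incident cells rather than the graph-theoretic degree. For \eqn{Eqvxdeg3}, your double count of crossing pairs of top-to-bottom chords, keyed to the criterion $(a_1-a_2)(b_1-b_2)<0$, is mathematically the same count the paper performs; the paper phrases it as a perturbation of the boundary nodes into general position followed by coalescence of the $\binom{c}{2}$ simple crossings at each multiple point. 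Your version needs no limiting argument, while the paper's phrasing is the one that recurs for $BC_{GP}(m,n)$ in \textsection\ref{SecBCGP}.
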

\begin{proof}
The first equation simply gives the total number of nodes in $BC(1,n)$. For \eqn{Eqvxdeg2} we count
pairs $(\alpha, \beta)$,  where $\alpha$ is a cell and $\beta$ is a node, in two ways, obtaining
$$
3T(n)+4Q(n) ~=~ 4(n+1) + (2n-2)(n+2) + \sum_{c} 2c\, v_{n,c}\,,
$$
and use \eqn{EqTQ2}. To establish \eqn{Eqvxdeg3}, we start with the observation that if all the 
$2n+2$ boundary points of
$BC(1,n)$ are perturbed by small random amounts, there will be no triple or higher-order intersection points,
all the internal nodes will be simple, and there will be
$\binom{n+1}{2}^2$ of them (since any pair of nodes on the upper side of the rectangle and any pair of nodes on the lower side will determine a unique intersection point).
As the boundary points are returned to their true positions, the internal nodes coalesce.
If there is an interior point where $c$ chords intersect, the $\binom{c}{2}$ interior nodes there
coalesce into one, and we lose $\binom{c}{2}-1$ intersections.  We are left with the $\sN(1,n)-(2n+2)$
interior intersection points. Thus
$$
   \sum_{c=2}^{n+1} \left(\binom{c}{2}-1\right)v_{n,c} ~+~ \sN(1,c) -  (2n+2)   ~=~ \binom{n+1}{2}^2\,,   
$$
which simplifies to give \eqn{Eqvxdeg3}.
   \end{proof}
                           
However, we do not even have a formula for the number of simple
interior intersection points in $BC(1,n)$ (the first column of
Table~\ref{TabWangi}, the sequence 
$\{v_{n,2}\}$, \seqnum{A334701}), although we have computed $500$ terms. 
The first $100$ terms are shown in Table~\ref{TabA334701}.
We feel that a formula should exist!

\begin{table}[!ht] 
\caption{The first $100$ terms of the number of simple interior intersection points in $BC(1,n)$.}\label{TabA334701}
$$
   \begin{array}{rrrr}
\mbox{Terms~} $1--25$  & $26--50$ &  $51--75$ & $76--100$  \\
\hline
1 &   49246 &   679040 &   3264422  \\
6 &   57006 &   732266 &   3438642  \\
24 &   65334 &   790360 &   3616430  \\
54 &   75098 &   849998 &   3805016  \\
124 &   85414 &   914084 &   3998394  \\
214 &   97384 &   980498 &   4202540  \\
382 &   110138 &   1052426 &   4408406  \\
598 &   124726 &   1125218 &   4626162  \\
950 &   139642 &   1203980 &   4850198  \\
1334 &   156286 &   1285902 &   5085098  \\
1912 &   174018 &   1374300 &   5321854  \\
2622 &   194106 &   1463714 &   5571470  \\
3624 &   214570 &   1559064 &   5826806  \\
4690 &   237534 &   1657422 &   6095870  \\
6096 &   261666 &   1762004 &   6369534  \\
7686 &   288686 &   1869106 &   6655902  \\
9764 &   316770 &   1983922 &   6948566  \\
12010 &   348048 &   2102162 &   7256076  \\
14866 &   380798 &   2228512 &   7565826  \\
18026 &   416524 &   2356822 &   7889032  \\
21904 &   452794 &   2493834 &   8220566  \\
25918 &   492830 &   2635310 &   8568428  \\
30818 &   534962 &   2786090 &   8919298  \\
36246 &   580964 &   2938326 &   9285288  \\
42654 &   627822 &   3099230 &   9658638  
   \end{array}
 $$
 \end{table}

\begin{problem}\label{OPSIP}
Find a formula for the number of simple interior intersection points in $BC(1,n)$
(see Table~\ref{TabA334701} for $100$ terms, or \seqnum{A334701} for $500$ terms).
\end{problem}


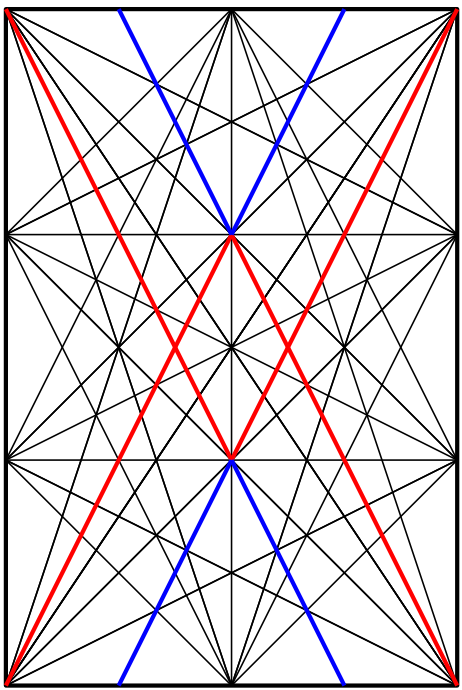
\begin{figure}[!ht]

\begin{center}
\begin{tikzpicture}[scale=3,black,semithick,line join=round]

\foreach \i in {0,...,3} {\foreach \j in {0,...,3} \draw (0,\i) -- (2,\j); }
\foreach \i in {0,...,3} {\foreach \j in {1,...,2} { \draw (0,\i) -- (\j, 0); \draw (0,\i) -- (\j, 3);} }
\foreach \i in {0,...,2} {\foreach \j in {0,...,2} \draw (\i,0) -- (\j,3); }
\foreach \i in {0,...,1} {\foreach \j in {1,...,3} { \draw (\i,0) -- (2,\j); \draw (\i,3) -- (2,\j);} }

\draw [ultra thick] (0,0) -- (2,0) -- (2,3) -- (0,3) -- (0,3) -- cycle;

\draw [red,ultra thick] (0,0) -- (1,2);
\draw [red,ultra thick] (2,0) -- (1,2);
\draw [red,ultra thick] (0,3) -- (1,1);
\draw [red,ultra thick] (2,3) -- (1,1);

\draw [blue,ultra thick] (0.5,0) -- (1,1);
\draw [blue,ultra thick] (1.5,0) -- (1,1);
\draw [blue,ultra thick] (0.5,3) -- (1,2);
\draw [blue,ultra thick] (1.5,3) -- (1,2);

\end{tikzpicture}
\end{center}

        \caption{Comparison of the graphs $BC(3,2)$ (black lines),
                $AC(3,2)$ (add the red lines), and $LC(3,2)$ (also add the blue lines).}
        \label{Fig23Cfn}
\end{figure}


\section{\texorpdfstring{$BC(m,n)$: $m \times n$}{BC(m,n): m X n} rectangular windows}\label{SecBCmn}
The graph $BC(1,n)$ ($n \ge 1$) is based on a $1 \times n$ rectangle.
In this section we consider what happens if we start more generally 
from an $(m,n)$-reticulated rectangle
(where $m \ge 1$, $n \ge 1$):
this is a rectangle of size $m \times n$ in which both vertical edges are divided into $m$ equal parts, and both horizontal edges into $n$ equal parts.
There are $m-1$ nodes on each vertical edge and
$n-1$ nodes on each horizontal edge, for a total of $4+2(m-1)+2(n-1) = 2(m+n)$ boundary nodes.

We will discuss three families of graphs based on these rectangles, denoted by $BC(m,n)$, $AC(m,n)$,
and $LC(m,n)$. The graph $BC(m,n)$ is formed by joining every pair of boundary nodes
by a line segment and placing a node at each point where two or more line segments intersect.
Figs. \ref{FigBC22bw} and \ref{FigBC22} show $BC(2,2)$, and Fig.~\ref{FigBC33} shows $BC(3,3)$.
(``$BC$'' stands for ``boundary chords''.)

Alternatively, we  could have constructed $BC(m,n)$ by starting with an $m \times n$ grid of 
equal squares, and then  joining each pair of boundary  grid points by a line segment.
However, if we include the interior grid points, there are there are $(m+1)(n+1)$ grid points in all,
and if we join \emph{each} pair of  grid points by a line segment, we obtain the
graph $AC(m,n)$.  
(``$AC$'' stands for ``all chords''.) 
These graphs are discussed by Huntington T. Hall \cite{Hall04},
Marc~E.~Pfetsch and G\"{u}nter~M.~Ziegler \cite{PfZi04}, and Hugo Pfoertner (entry \seqnum{A288187} in \cite{OEIS}). 
We shall say more about $AC(m,n)$  in \textsection\ref{SecAC}.

A third family of graphs, $LC(m,n)$, arises if we extend each line segment in $AC(m,n)$ 
until it reaches the boundary of the grid. 
(``$LC$'' stands for ``long chords''.)
These graphs are discussed by Seppo~Mustonen \cite{Mus08, Mus09, Mus10}.
We say more about $LC(m,n)$ in \textsection\ref{SecLC}.

Figure~\ref{Fig23Cfn} shows the differences between the three definitions in the case of a $(3,2)$ reticulated rectangle, the first time the definitions differ.
The black lines form the graph $BC(3,2)$. 
The four red lines are the additional line segments that appear when we construct $AC(3,2)$. They
start at an interior grid point and so are not present in $BC(3,2)$.
The four blue lines extend the red chords until they reach the boundary
of the grid, and form $AC(3,2)$.

%
%
   
   
 \begin{figure}[!ht]
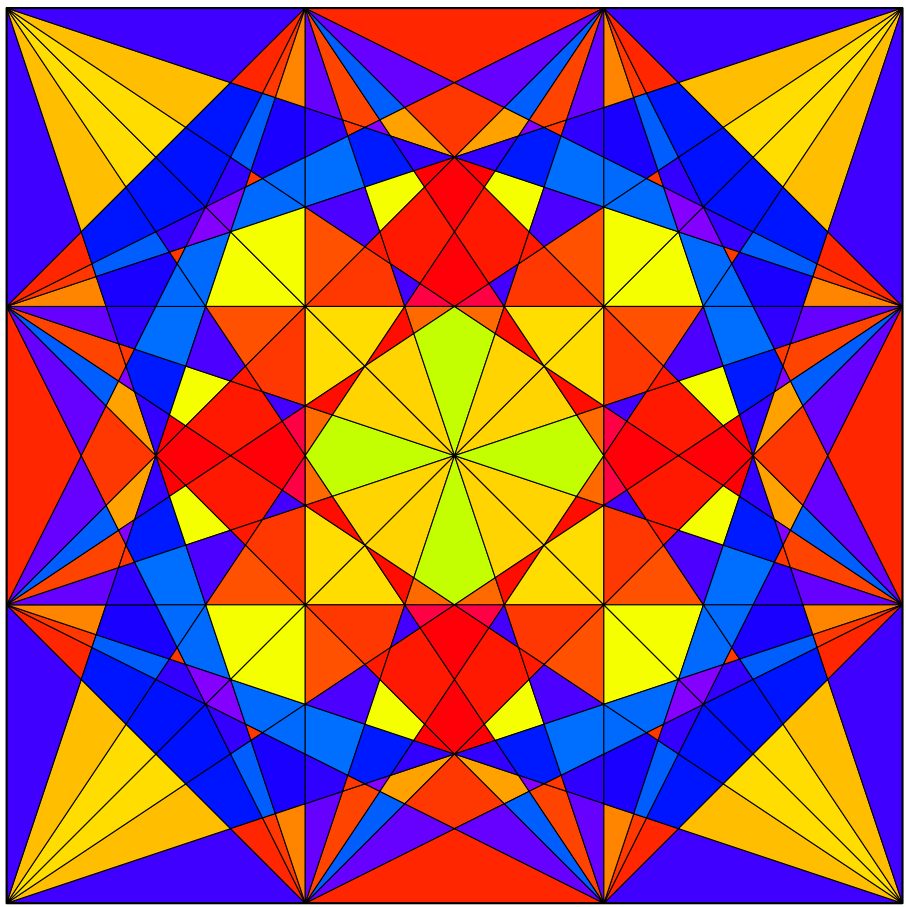

	\begin{center}

	\end{center}
	  \caption{The graph $BC(3,3)$. There are $257$ nodes and $340$ cells.}
   \label{FigBC33}
\end{figure}

The numbers of nodes  $\sN(m,n)$ 
 and cells $\sC(m,n)$ in $BC(m,n)$  are shown for $m, n \le 37$ in 
\seqnum{A331453} and \seqnum{A331452}, respectively,
and the initial terms are shown in Table~\ref{TabBC1}.

\begin{table}[!htb]
\caption{Numbers of nodes $\sN(m,n)$ and cells $\sC(m,n)$ in $BC(m,n)$ for $1 \le m, n \le 7$.}\label{TabBC1}
$$
\begin{array}{|c|c|c|c|c|c|c|c|}
\hline 
m \backslash \, n & 1 & 2 & 3 & 4 & 5 & 6 & 7  \\
\hline 
1 & 5, 4 & 13, 16 & 35, 46 & 75, 104 & 159, 214 & 275, 380 & 477, 648   \\
2 & 13, 16 & 37, 56 & 99, 142 & 213, 296 & 401, 544 & 657, 892 & 1085, 1436   \\ 
3 & 35, 46 & 99, 142 & 257, 340 & 421, 608 & 881, 1124 & 1305, 1714 & 2131, 2678   \\
4 & 75, 104 & 213, 296 & 421, 608 & 817, 1120 & 1489, 1916 & 2143, 2820 & 3431, 4304   \\
5 & 159, 214 & 401, 544 & 881, 1124 & 1489, 1916 & 2757, 3264 & 3555, 4510 & 5821, 6888   \\
6 & 275, 380 & 657, 892 & 1305, 1714 & 2143, 2820 & 3555, 4510 & 4825, 6264 & 7663, 9360   \\ 
7 & 477, 648 & 1085, 1436 & 2131, 2678 & 3431, 4304 & 5821, 6888 & 7663, 9360 & 12293, 13968 \\  
\hline
\end{array}
$$
\end{table}

Regrettably, except when $m$ or $n$ is $1$, we have been unable to find formulas
for any of these quantities.
The diagonal case, when $m=n$, is the most interesting (because
the most symmetrical), but is also probably the hardest to solve.
In accordance with our philosophy of ``if you can't solve it, make art'',
Fig.~\ref{FigBC33}  shows our stained glass window $BC(3,3)$, and 
entry \seqnum{A331452} has a large number of larger and even more striking  examples which space
restrictions do not permit us to show here.


\input{InputBC42}

Out of all these unsolved problems, the case $m=2$ (or $n=2$) would seem
to be the most amenable to analysis, 
perhaps by extending the work of Legendre  \cite{Leg09} and  Griffiths \cite{Gri10}.
For instance, what are the conditions
for three chords in $BC(2,n)$ to intersect at a common point?
We emphasize this by stating:

\begin{problem}\label{OPm=2}
Find formulas for the numbers of nodes $(\sN(2,n)$, \seqnum{A331763}$)$ and 
cells $(\sC(2,n)$, \seqnum{A331766}$)$ in $BC(2,n)$.
\end{problem}

The first $10$ terms are given in Table~\ref{TabBC2}, and $100$ terms are given in the 
entries for these two sequences  in~\cite{OEIS}.

\begin{table}[!ht]
\caption{Numbers of nodes and cells in $BC(2,n)$. }\label{TabBC2}
$$
   \begin{array}{crrrrrrrrrrrc}
   n: & 1 & 2 & 3 & 4 & 5 & 6 & 7 & 8 & 9 & 10 &  \cdots &  \cite{OEIS} \\
    \sN(2,n): & 13 &  37 &  99 &  213 &  401 &  657 &  1085 &  1619 &  2327 &  3257 &   \cdots &  \seqnum{A331763}  \\
    \sC(2,n): & 16 &  56 &  142 &  296 &  544 &  892 &  1436 &  2136 &  3066 &  4272  &\cdots & \seqnum{A331766}
   \end{array}
 $$
 \end{table}


In $BC(1,n)$ the cells are always triangles or quadrilaterals (Theorem~\ref{ThABZ}).
It appears that a similar phenomenon holds for $BC(2,n)$. 
The data strongly suggests the following conjecture.

\begin{conj}\label{ConjBC2cell}
The cells in $BC(2,n)$ have at most eight sides, and for $n \ge 19$, at most six sides.
\end{conj}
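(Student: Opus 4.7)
My plan is to combine a computational verification up to a moderate value of $n$ with a coincidence-based analytic argument for large $n$. First, I would classify the chord directions in $BC(2,n)$: the boundary carries $2n+4$ nodes---the four corners, the midpoint of each vertical side, and $n-1$ interior nodes on each horizontal side---and every chord is labeled by the unordered pair of sides containing its endpoints. This gives a short finite list of direction families (top--bottom, top--left, top--right, bottom--left, bottom--right, and left--right, together with the few midpoint-to-horizontal-side families). Because an arrangement of line segments produces convex cells, a $k$-gon cell is bounded by $k$ chords whose directions are pairwise distinct; I would first enumerate the combinatorial types of cyclic sequences of direction-families that could bound a convex cell.

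Second, I would verify the conjecture computationally for $n \le N$ for some $N$ comfortably beyond $19$ (say $n \le 60$), using the same code that produced \seqnum{A331763} and \seqnum{A331766}. For each exotic cell (with $\ge 7$ sides) that appears in this range, I would record the cyclic list of chord types bounding it and, crucially, the three-or-more-chord concurrences at its vertices. I expect the data to show that every exotic cell is produced by one of a small bounded family of coincidence conditions, each amounting to a rational linear equation in the division-fractions $i/n$ of low height.

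Third, adapting the methods of Poonen--Rubinstein~\cite{PoRu98} and Legendre~\cite{Leg09} for detecting when three chords meet at a point, I would show that each of these coincidence equations is solvable only for $n$ below an explicit threshold. Combining the finitely many thresholds should pick out $n=19$ as the largest value for which a $7$- or $8$-sided cell can occur, and the universal bound of $8$ then follows from direct computation for the remaining $n\le 19$.

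The hard part will be the second step: identifying the complete list of chord-coincidence equations responsible for exotic cells and certifying that no others arise. In contrast to $BC(1,n)$---where Theorem~\ref{ThABZ} forbids even pentagons via a teaching-set argument---the presence of midpoint nodes on the short sides of $BC(2,n)$ allows many more direction families, so the coincidence analysis involves equations of higher complexity. A clean proof will likely require isolating the handful of generic exotic configurations by hand and then showing, configuration by configuration, that each requires $n$ to lie below a specific bound, with the $8$-gon configurations disappearing first and the $7$-gon configurations disappearing exactly at $n=19$.
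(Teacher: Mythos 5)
The statement you are addressing is stated in the paper only as a conjecture (Conjecture~\ref{ConjBC2cell}); the paper offers no proof, only the remark that it has been verified computationally for $n \le 106$. Your text is likewise not a proof but a research program, and by your own admission the decisive step---certifying that \emph{every} cell with seven or more sides arises from one of a finite, explicitly listed family of chord-coincidence configurations, and then showing each configuration is unrealizable beyond an explicit threshold in $n$---is not carried out. Until that classification is actually established, nothing rules out exotic cells appearing for arbitrarily large $n$ via configurations outside whatever list the data for $n \le 60$ happens to exhibit; extrapolating from finitely many observed cases is exactly the gap that separates the conjecture from a theorem. So the proposal, while a sensible plan of attack (and one consonant with the paper's suggestion that the Legendre--Griffiths concurrence analysis be extended to $BC(2,n)$, cf.\ Open Problem~\ref{OPm=2}), does not prove the statement.

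One concrete error in your first step: it is not true that a convex $k$-gon cell is bounded by $k$ chords of pairwise distinct directions. The cells of $BC(2,n)$ are convex (each chord runs from boundary to boundary, so every cell is an intersection of half-planes with the rectangle), but a convex polygon can perfectly well have parallel edges---a parallelogram-shaped cell is cut out by two pairs of parallel chords. Any enumeration of admissible cyclic sequences of direction families must therefore allow repeated and parallel directions, which enlarges the case analysis you would need to control. This does not invalidate the overall strategy, but it does mean the combinatorial preprocessing in your first step is not as tight as you suggest, and the burden on the (unexecuted) second and third steps is correspondingly heavier.
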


We have verified the conjecture for $n \le 106$.
Row $n$ of Table~\ref{TabBC2cells} gives the number of cells in $BC(2,n)$ with $k$ sides,
for $k \ge 3$ and $n \le 20$.  For rows $n=1, 2,$ and $3$ of this table  see Figs.~\ref{FigBC12},
\ref{FigBC11bw}, and \ref{Fig23Cfn} (black lines only). 
For row $4$ see Figure~\ref{FigBC42}, where one can see  that  $BC(4,2)$ 
has $192$ triangular cells (red), $92$ quadrilaterals (yellow),
and $12$ pentagons (blue). 
Entry  \seqnum{A335701} gives the first $106$ rows of this table, and
has many further illustrations.
The row sums  in Table~\ref{TabBC2cells} are the numbers $\sC(2,n)$ given
 in column $2$ of Table~\ref{TabBC1} and \seqnum{A331766}.

\begin{table}[!htb]
\caption{Row $n$ gives the number of cells in $BC(2,n)$ with $k$ sides,
for $k \ge 3$. It appears that for $n \ge 19$, no cell has more than six sides (see \seqnum{A335701}). }\label{TabBC2cells}
$$
\begin{array}{c|cccccc}
n \backslash \, k & 3 & 4 & 5 & 6 & 7 & 8 \\
\hline
1 &  14 &  2 \\
2 &  48 &  8 \\
3 &  102 &  36 &  4 \\
4 &  192 &  92 &  12 \\
5 &  326 &  194 &  24 \\
6 &  524 &  336 &  28 &  4 \\
7 &  802 &  554 &  80 \\
8 &  1192 &  812 &  128 &  4 \\
9 &  1634 &  1314 &  112 &  0 &  4 &  2 \\
10 &  2296 &  1756 &  200 &  20 \\
11 &  3074 &  2508 &  236 &  22 \\
12 &  4052 &  3252 &  356 &  28 \\
13 &  5246 &  4348 &  472 &  28 \\
14 &  6740 &  5464 &  652 &  28 \\
15 &  8398 &  7054 &  656 &  74 \\
16 &  10440 &  8760 &  940 &  52 \\
17 &  12770 &  11050 &  1040 &  58 \\
18 &  15512 &  13324 &  1300 &  60 &  4 \\
19 &  18782 &  16162 &  1600 &  70 \\
20 &  22384 &  19256 &  1948 &  104 \\
\hline
\end{array}
$$
\end{table}

More generally we may ask:
For $BC(m,n)$, $m$ fixed, is there an upper bound on the number of sides of a cell as $n$ varies?


We are at least able to analyze the corner squares of $BC(2,n)$.

\begin{thm}\label{ThBCcnr2}
For $n=2$ the four corner squares of $BC(2,n)$ (and $BC(n,2)$)  each contain $12$ triangles 
and $4$ quadrilaterals, while for $n=3$ 
they contain $15$ triangles, $6$ quadrilaterals, and (exceptionally) one pentagon.
For $n \ge 4$, the  corner squares
each contain $7n+1$ cells,
consisting of $2n+9$ triangles and $5n-8$ quadrilaterals.
\end{thm}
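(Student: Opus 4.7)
By the reflection symmetries of the rectangle, it suffices to analyze a single corner square of $BC(2,n)$; I work with $S = [0,1]\times[0,1]$, the bottom-left corner (the case $BC(n,2)$ follows by transposition). The plan has three substantive steps: enumerate the chords that meet $S$, rule out triple concurrences inside $S$, and count the cells by type.

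A chord of $BC(2,n)$ meets $S$ in one of three ways: both endpoints among $\{(0,0),(1,0),(0,1)\}$, exactly one endpoint in that set, or neither (the chord merely cuts through $S$). Sorting by endpoint-pair type, the chords split into a ``local'' family whose count is independent of $n$, together with a ``long-chord'' family consisting of chords from one of $(0,0), (1,0), (0,1)$ to a distant boundary node $(k,0), (k,2)$ or $(n,1)$; the number of chords in each family is an explicit linear function of $n$, and one checks that type (iii) contributes nothing for $n \ge 4$.

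The decisive claim is that for $n \ge 4$ no three chords of $BC(2,n)$ are concurrent at an interior point of $S$. Any hypothetical triple concurrence translates, via the slope--intercept formulas of the three chords, into a Diophantine identity among the indices of the boundary nodes involved, and the enumeration of the previous step reduces this to a short, $n$-uniform list of identities that can each be eliminated by elementary arithmetic. The only genuine coincidence that does occur arises at $n = 3$ and is precisely the source of the pentagon; the cases $n = 2, 3$ are verified by direct inspection from the figures and from entry~\seqnum{A335701}.

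Once triple concurrences in the interior of $S$ are excluded, the number $v$ of vertices of the arrangement on $\overline{S}$ decomposes into an $n$-linear count of boundary vertices on $\partial S$ and a likewise $n$-linear count of interior vertices, each of degree four, and the handshake lemma gives an $n$-linear edge count $e$. Euler's formula then yields $f = 7n+1$ cells. Each cell is a triangle or a quadrilateral, so with $b$ the number of edges on $\partial S$,
\[
T_S + Q_S = f, \qquad 3T_S + 4Q_S = 2e - b,
\]
and solving gives $T_S = 2n+9$, $Q_S = 5n-8$. The main obstacle is the concurrence analysis: ruling out all triple coincidences inside $S$ uniformly in $n \ge 4$, while simultaneously exhibiting the single genuine triple at $n = 3$ responsible for the pentagon. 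The remaining enumeration and Euler-formula bookkeeping, though routine, still requires careful case analysis of the several chord families.
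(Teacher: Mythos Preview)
Your approach differs from the paper's and, as written, has a real gap. The paper does not use Euler's formula at all: it dissects the corner square into six explicit regions $a,\ldots,f$ (bounded by a handful of specific chords), observes that in each region the chord pattern is either a single fan or a simple crosshatch, and reads off the numbers of triangles and quadrilaterals directly from that picture. The dissection is what simultaneously proves that no pentagons occur for $n\ge 4$ and gives the exact counts.

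Your plan, by contrast, hinges on the sentence ``Each cell is a triangle or a quadrilateral,'' which you then feed into the system $T_S+Q_S=f$, $3T_S+4Q_S=2e-b$. But that sentence is exactly the nontrivial part of the theorem for $n\ge 4$, and it does \emph{not} follow from excluding triple concurrences: an arrangement of segments in general position (all interior vertices of degree~$4$) can easily have pentagons and larger cells. So after your Euler step you have two equations in the unknowns $T_S,Q_S,P_S,\ldots$, not two equations in two unknowns, and nothing you have established closes the system. You need an independent argument that no cell has five or more sides, and the natural way to do that is precisely the region-by-region inspection the paper carries out, at which point the Euler bookkeeping becomes redundant.

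A smaller but related issue: your boundary accounting must handle the corner $(1,1)$ of $S$, where for every $n\ge 2$ at least four chords of $BC(2,n)$ concur (the segments $x=1$, $y=1$, $y=x$, $y=2-x$). This is a genuine high-degree vertex on $\partial S$, so the ``each interior vertex has degree four'' picture does not extend cleanly to the closure, and the degree sum and boundary-edge count $b$ need to be done with care. The paper's dissection absorbs this automatically.
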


\begin{proof}
We consider the top left corner square of $BC(n,2)$, assuming $n \ge 4$.
The key to the proof is to dissect this square into regions, in each of which the cell
structure is apparent, and such that the boundaries of the regions do not cross any cell boundaries.
This is done as indicated in Fig.~\ref{FigBCcnr}. There are six regions, labeled $a$ through $f$.

We assume the coordinates are chosen so that nodes $A, B, C, D$ have coordinates
$(0,0)$, $(1,0)$, $(1,1)$, and $(0,1)$, respectively. The four vertices
of the rectangle defining $BC(n,2)$ have coordinates $(0,0)$, $(2,0)$, $(2,n)$, and $(0,n)$.

The chord from $A$ to the grid point $(1,n)$ cuts the line $DC$ midway between $D$ and $F$, 
and the  $n-1$ chords from $A$ to grid points $(2,n)$, $(2,n-1), \ldots$, $(2,2)$ cut $DC$ between $F$ and $C$.
The final chord from $A$ to $(2,1)$ cuts $BC$ at $E$.
 The top left triangular region $f$ is therefore divided into $n+2$ triangular cells.
 
 There is a chord from $B$ to $D$, a chord from $B$ to the grid point $(0,2)$ which cuts $DC$  at $F$, 
 and  $n-2$  further
 chords from $B$ to the grid points $(0,3), \ldots, (0,n)$, which
 cut $DC$ to the right of $F$.
 
 There is one further chord that cuts this corner square, the chord from $D$ to $E$
 
 The reader will now have no difficulty in verifying that the cells in regions $a,b,c,d,e,f$ are 
 as shown in Table~\ref{Tababcdef}.
 
 \begin{figure}[!ht] 
\begin{center}
\begin{tikzpicture}[scale=1]
\draw (0,0) -- (0,6);
\draw (0,0) -- (6,0);
\draw (0,0) -- (6,6);
\draw (0,6) -- (6,6);
\draw (6,0) -- (6,6);
\draw (3,0) -- (2,2);
\draw (3,0) -- (4,2);
\draw (3,3) -- (6,0);
\draw ((4,4) -- (6,3);

\node [above] at (0,6) {A};
\node [above] at (6,6) {B};
\node [below] at (6,0) {C};
\node [below] at (0,0) {D};
\node [right] at (6,3) {E};
\node [below] at (3,0) {F};

\node at (1.5,0.65) {e};
\node at (3,1.8) {d};
\node at (4.5,0.65) {c};
\node at (4.8,2.6) {b};
\node at (5.4,4.3) {a};
\node at (2,4) {f};
\end{tikzpicture}
\caption{Dissection of corner square of $BC(n,2)$, $n \ge 4$, used in proof of Theorem~\ref{ThBCcnr2}.}
\label{FigBCcnr}
\end{center}
\end{figure}
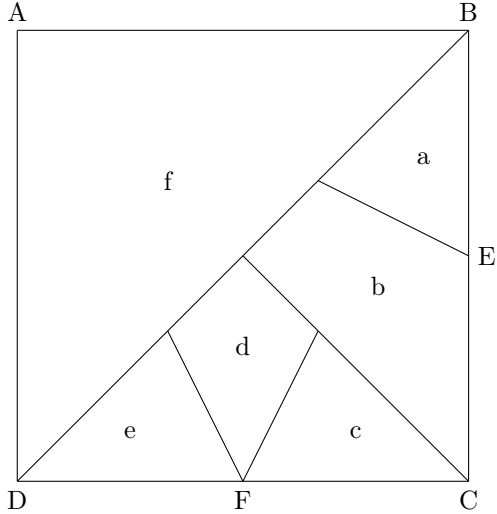

 \begin{table}[!htb]
  \caption{Numbers of triangles and quadrilaterals in the regions shown in Fig.~\ref{FigBCcnr}. }\label{Tababcdef}
 $$
 \begin{array}{ccc}
 \mbox{Region }& \mbox{triangles} & \mbox{quadrilaterals} \\
 \hline 
 a & n & 0  \\
 b & 2 & 2n-3 \\
 c & 2 & n-2 \\
 d & 1 & 3 \\
 e & 2 & 2n-6 \\
 f & n+2 & 0  \\
 \hline\mbox{Total} & 2n+9 & 5n-8 \\
\end{array}
$$
\end{table}
\end{proof}

\section{\texorpdfstring{$BC(m,n)$}{BC(m,n)} in general position.}\label{SecBCGP}

We can  obtain reasonably good upper bounds on
$\sN(m,n)$ and $\sC(m,n)$  by analyzing what would happen if all the intersection 
points in $BC(m,n)$ were simple intersections---that is, if there was no interior 
point where three or more chords met.  

We use $BC_{GP}(m,n)$ to denote a graph obtained by perturbing the boundary nodes 
of $BC(m,n)$ (excluding the four vertices)  by small random sideways displacements along the boundaries.
That is, if a boundary node was a fraction $\frac{i}{j}$ say of the way along an edge, we move it to
a point $\frac{i}{j} + \epsilon$ of the way along the edge, where $\epsilon$ is a small random real number.
If the $\epsilon$'s are chosen independently, the new graph will be in ``general position", 
and there will be no multiple intersection points in the interior.

To illustrate the perturbing process, in Fig.~\ref{FigBCGP12} below one can see
 (ignoring for now the supporting strut on the left) a perturbed version of  $BC(1,2)$ obtained
 by slightly displacing just one node (labeled $4$)  so as to avoid the triple
 intersection point at the center (see Fig.~\ref{FigBC12}).

Let $\sN_{GP}(m,n)$ and $\sC_{GP}(m,n)$ denote
the numbers of nodes and cells, respectively, 
in the perturbed graph.  The perturbations increase the numbers of nodes and cells, 
so $\sN_{GP}(m,n) \ge \sN(m,n)$
and $\sC_{GP}(m,n) \ge \sC(m,n)$, and
$\sN_{GP}(m,n) \to \sN(m,n)$
and $\sC_{GP}(m,n) \to \sC(m,n)$ as the displacements are reduced to zero.

\begin{thm}\label{Th129.1}
For $m,n \ge 1$, the number of interior nodes in $BC_{GP}(m,n)$ is
\beql{EqIGPmn}
 \frac{1}{4}\, \lbrace (m+n)(m+n-1)^2(m+n-4) ~+~ 2mn(2m+n-1)(m+2n-1)\rbrace\,. 
\eeq
\end{thm}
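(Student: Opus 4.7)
The plan is to express the number of interior nodes of $BC_{GP}(m,n)$ as a count of $4$-element subsets of boundary points in convex position, and then to evaluate this count. Since the boundary points are displaced independently by small random amounts, no three chords can concur in the interior; hence every interior node is the simple intersection of exactly two chords with four distinct boundary endpoints. For any four distinct points on the convex boundary of the rectangle, listed $P_1,P_2,P_3,P_4$ in cyclic order, the only pairing into two chords that can cross is into the diagonals $\overline{P_1P_3}$ and $\overline{P_2P_4}$, and these diagonals meet in the interior exactly when the quadrilateral is strictly convex, i.e., when no three of $P_1,P_2,P_3,P_4$ are collinear.

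Three boundary points can be collinear only by lying on a common side of the rectangle, and the perturbation preserves precisely this: nodes on a given side stay on the line of that side, while nodes on distinct sides generically produce no new collinearities. Thus the number of interior nodes is
\[
\binom{2(m+n)}{4}-B(m,n),
\]
where $B(m,n)$ counts the $4$-subsets containing at least three points on a single side. The top and bottom sides each carry $n+1$ boundary nodes (two corners plus $n-1$ interior side-nodes), and the left and right sides each carry $m+1$ such nodes. Two distinct sides share at most one corner, so no $4$-subset can contain collinear triples from two different sides, meaning the four side-contributions to $B(m,n)$ are disjoint. Splitting each by ``exactly three'' versus ``all four'' on that side gives
\[
B(m,n)=2\!\left[\binom{n+1}{3}(2m+n-1)+\binom{n+1}{4}\right]+2\!\left[\binom{m+1}{3}(m+2n-1)+\binom{m+1}{4}\right].
\]

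Finally, verifying the identity
\[
\binom{2(m+n)}{4}-B(m,n)=\frac{1}{4}\bigl\{(m+n)(m+n-1)^2(m+n-4)+2mn(2m+n-1)(m+2n-1)\bigr\}
\]
reduces to a routine polynomial check, say by using $\binom{k+1}{4}=\frac{1}{4}\binom{k+1}{3}(k-2)$ to consolidate each bracketed pair into a single product and then expanding with $s=m+n$. The main obstacle is really the geometric step rather than the algebra: one must justify carefully that a sideways perturbation of the boundary nodes of a rectangle removes every interior coincidence of three or more chords while introducing no new collinearities among points lying on different sides, and must correctly handle the corner nodes (each of which lies on two sides simultaneously) when tallying $B(m,n)$.
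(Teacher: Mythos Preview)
Your proof is correct, and it rests on the same key observation as the paper: in general position, each interior node corresponds bijectively to a $4$-subset of boundary points with no three on a common side. Where you differ is in how you count those $4$-subsets. The paper performs a direct case analysis, classifying the good $4$-subsets into seventeen configurations according to which vertices and which sides the four points come from, and then summing the seventeen contributions. You instead use complementary counting: take all $\binom{2(m+n)}{4}$ subsets and subtract the bad ones, which you compute side by side after noting that no $4$-subset can be bad with respect to two distinct sides (since adjacent sides share only a corner). Your route is shorter and avoids the lengthy enumeration; the paper's route has the minor advantage of making visible which geometric configurations contribute which terms, but at the cost of an omitted seventeen-case bookkeeping exercise. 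Both arguments lean on the same unproved genericity claim about the perturbation, and you are right to flag that as the real content.
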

\begin{proof}We start with the observation that any four boundary nodes of the rectangle, no three of which are on an edge, determine a unique intersection point in the interior of the rectangle.  There are several ways to choose these 
four points.  They might be the four vertices of the rectangle, which can be done in just one way.
They might consist of three vertices and a single node on one of the other two sides,
which can be done in $4(m_1+n_1)$ ways, where $m_1 = m-1$ and $n_1 = n-1$ are the numbers of ways of choosing a single non-vertex point on a side. A more typical example 
consists of one vertex, and one, resp. two, points on the two opposite sides, as shown in the following drawing.  This can be done in $4(m_1 n_2 + m_2 n_1)$ ways, where 
$m_2 = (m-1)(m-2)/2$, $n_2 = (n-1)(n-2)/2$ are the numbers of ways of choosing two non-vertex nodes from the sides.
\begin{center}
\begin{tikzpicture}[scale=2]
\coordinate (P0) at (0,0);
\draw (0,0) -- (0,1);
\draw (0,0) -- (1.5,0);
\draw (0,1) -- (1.5,1);
\draw (1.5,0) -- (1.5,1);
\draw[fill] (0,1) circle[radius = 0.04];
\draw[fill] (1,0) circle[radius = 0.04];
\draw[fill] (1.5,0.5) circle[radius = 0.04];
\draw[fill] (1.5,0.7) circle[radius = 0.04];
\draw [thick] (1.0,0) -- (1.5,0.7);
\draw [thick] (0,1.0) -- (1.5,0.5);
\node at (1.0,-.2) {$n-1$};
\node at (1.9,0.65) {$\binom{m-1}{2}$};
\end{tikzpicture}
\end{center}

There are in all seventeen different configurations for choosing four points, and 
when the seventeen counts are added up the result is the expression given in \eqn{EqIGPmn}. We omit the details.
\end{proof}

\vspace*{+.2in}
   \noindent{\bf Remarks.}
(i) Since there are $2(m+n)$ boundary nodes, the total number of
nodes in $BC_{GP}(m,n)$ is
\beql{EqNGPmn}
\sN_{GP}(m,n) ~=~ \frac{1}{4}\, \lbrace (m+n)(m+n-1)^2(m+n-4) ~+~ 2mn(2m+n-1)(m+2n-1)\rbrace +2(m+n)\,. 
\eeq
This is our upper bound for $\sN(m,n)$.

(ii) Another way to interpret $\sN_{GP}(m,n)$ is that this is the number of nodes in $BC(m,n)$ 
counted with multiplicity (meaning that if there is an interior
node where $c$ chords meet, it  contributes $c-1$ to the total).

(iii) When $m=n$, \eqn{EqNGPmn} simplifies to
\beql{EqNGPnn}
\frac{n}{2}\,(17 n^3-30 n^2+19 n+4)\,,
\eeq
which is our upper bound for  $\sN(n,n)$. 
For $n=52$, $\sN(n,n) = 52484633$ (from \seqnum{A331449}), while \eqn{EqNGPnn} 
gives $60065408$,  too large by a factor of $1.14$, which is not too bad.
The moral seems to be that most internal nodes are simple.

(iv) When $m=1$, \eqn{EqNGPmn}  becomes $n^2(n+1)^2/4$, which agrees with the number mentioned in the proof of Theorem~\ref{Thvxdeg}.

(v) For large $m$ and $n$, the expression \eqn{EqNGPmn} is dominated by the degree $4$ terms, which are
\beql{EqDeg4}
\frac{1}{4} \left( m^4 + n^4 + 8mn(m^2+n^2) + 16 m^2 n^2\right)\,.
\eeq
Setting $m=n$, we get  $\sN_{GP}(n,n)  \sim 17n^4/2$ as $n \to \infty$.  We can confirm this by looking
at the number of ways to choose four nodes out
of the $4n$ boundary nodes so that no three are on a side. This is (essentially)
$$
\binom{4n}{n} ~-~ 4\,\binom{n}{4} ~-~12\, n\, \binom{n}{3} ~\sim~ \frac{17}{2}n^4\,.
$$

(vi) From (v), we have $\sN(n,n) =  O(n^4)$. In fact, we conjecture that $\sN(n,n) \sim \sN_{BC}(n,n) \sim 17\,n^4/2$.
But to establish this we would need better information about the number of
interior nodes in $BC(n,n)$ with a given multiplicity.


 \begin{figure}[!ht] 
\begin{center}
\begin{tikzpicture}[scale=3]
\coordinate (P0) at (-1.049770328, 1.974330837);
\coordinate (P1) at (-0.2380969460, 1.396741441);
\coordinate (P2) at (-1.638304089, 1.147152873);
\coordinate (P3) at (0.5735764361, 0.8191520445);
\coordinate (P4) at (-0.8191520445, 0.5735764365);
\coordinate (P4p) at (-0.9829824534, 0.6882917238);
\coordinate (P5) at (0,0);
\coordinate (P6) at (-1.638304089,0);

\draw (-2,0) -- (1,0);
\draw (P5) -- (P2);
\draw (P5) -- (P0);
\draw (P5) -- (P1);
\draw (P5) -- (P3);
\draw (P3) -- (P0);
\draw (P3) -- (P2);
\draw (P3) -- (P4p);
\draw (P0) -- (P2);
\draw (P0) -- (P4p);
\draw (P1) -- (P2);
\draw (P1) -- (P4p);
\draw [line width=0.1cm]  (P6) -- (P2);

\node [above] at  (-1.049770328, 1.974330837)  {0};
\node [above] at  (P1)  {1};
\node [left] at  (P2)  {2};
\node [above] at  (P3)  {3};
\node [below] at  (P4p)  {4};
\node [below] at  (P5)  {5};

\draw [ultra thick, red] (-1.8, 2.2) -- (0.7, 2.2);
\draw [dashed, ultra thick, red] [->] (-1.8, 2.2) -- (-1.8, 2.0);
\draw [dashed, ultra thick, red] [->] (0.7, 2.2) -- (0.7, 2.0);

\end{tikzpicture}
\caption{$BC(1,2)$ in general position: node $4$ has been displaced slightly so as to avoid the triple intersection point at the center. The
strut on the left tilts the figure so that the ordinates of the boundary nodes 
are in the same order as the labels.  The red line is the ``counting line'', which descends across the picture in order to count the cells.}
\label{FigBCGP12}
\end{center}
\end{figure}
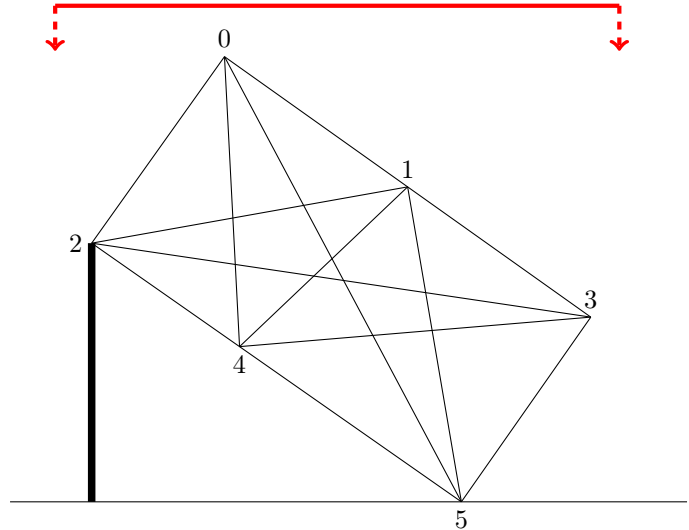

 \vspace{0.1in}
Now that we know the number of nodes, we can also find the number $\sC_{GP}(m,n)$  of cells in $BC_{GP}(m,n)$.
For this we use a method described by Freeman~\cite{Fre75}. The following is a slight modification of 
his procedure. 
$BC_{GP}(m,n)$ has $2(m+n)$ boundary nodes.
We label the top left corner vertex $0$, and the bottom right corner vertex $2(m+n)-1$.
The nodes along the top edge we label $0$, $1$, $3$, $5 \ldots$,$2n-1$,  continuing along along the right-hand edge with $2n+1$, $2n+3 \ldots$, $2(m+n)-1$.
Along the left-hand edge we place the labels  $0$, $2$, $4, \ldots$, $2m$,
 continuing  along the bottom edge with $2m+2$, $2m+4, \ldots$, $2m+2n-2$, $2(m+n)-1$.

We now raise the bottom left corner of the rectangle until the boundary nodes are at 
different heights, and so that the order of the heights matches the order of the labels
(node $0$ becomes the highest point, followed by nodes $1$, $2, \ldots$ in order).
Fig.~\ref{FigBCGP12} illustrates the case $BC_{GP}(1,2)$. The black strut raises 
the bottom left corner so that the heights of the nodes are in the correct order.

We now take a horizontal line (Freeman calls it a ``counting line''),
and slide it downwards from the top of the figure to
the bottom, recording each time it cuts a new cell.
The counting line is shown in red in the figure.

When the counting line reaches a boundary node, with label $k$ (say), 
the count is increased by the number of cells originating at $k$ that have
not yet been counted. This number is equal to the number
of boundary nodes with label greater than $k$ which are not on the same
side as $k$.
  On the other hand, when the counting line reaches an interior node the count increases by  exactly $1$ (this
  is because  there is no point where three chords meet). So
  the contribution to the count from the interior nodes is simply the number
  of interior nodes, which is known from Theorem~\ref{Th129.1}.

In Fig.~\ref{FigBCGP12}, the count goes up by $3$ at node $0$, by $3$ at node $1$,
$1$ at node $2$, and $1$ at node $3$, for a subtotal of $8$.  There are $9$ interior nodes, 
so the total number of cells is $17$.  

From a careful study of a tilted version of general case $BC_{GP}(m,n)$,, 
combined with~\eqn{EqNGPmn}, we obtain:

\begin{thm}\label{ThCGPmn}
For $m,n \ge 1$,  the number of cells in $BC_{GP}(m,n)$ is
\beql{EqCGPmn}
\sC_{GP}(m,n) ~=~ \frac{1}{4}\, \lbrace (m-1)^2(m-2)^2 + (n-1)^2(n-2)^2\rbrace 
+ 2mn\left(m+n-\frac{3}{2}\right)^2 + \frac{9mn}{2} -1\,.
\eeq
\end{thm}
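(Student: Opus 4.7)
The plan is to derive $\sC_{GP}(m,n)$ from Euler's formula, using Theorem~\ref{Th129.1} for the node count and an auxiliary edge count; this is a clean variant of the counting-line sweep just sketched. Because $BC_{GP}(m,n)$ is in general position, every interior node has degree $4$. Let $C$ denote the number of chords joining two boundary nodes on \emph{different} sides (same-side ``chords'' coincide with the boundary and contribute nothing new). Each such chord is cut by the interior intersections on it into one more piece than its intersection count, and since each interior intersection lies on exactly two chords, the total number of chord subsegments is $C+2I$, where $I$ is the interior-node count from~\eqn{EqIGPmn}. Adding the $2(m+n)$ boundary segments gives $E=C+2I+2(m+n)$. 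Combined with $V=2(m+n)+I$ and Euler's formula (removing the outer face),
\[
\sC_{GP}(m,n) \;=\; E-V+1 \;=\; 1+I+C.
\]

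First I would compute $C$. Among the $\binom{2(m+n)}{2}$ unordered pairs of boundary nodes, the pairs lying on a common edge of the rectangle number $\binom{n+1}{2}$ for the top, the same for the bottom, $\binom{m+1}{2}$ for the left, and the same for the right; no pair of distinct corners lies on two common sides, so there is no overcounting and
\[
C \;=\; \binom{2(m+n)}{2}-2\binom{m+1}{2}-2\binom{n+1}{2} \;=\; m^{2}+4mn+n^{2}-2m-2n.
\]
The same total emerges from Freeman's sweep by summing over boundary labels $k$ the number of labels exceeding $k$ on sides different from $k$'s.

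All that remains is the polynomial identity $1+I+C=\sC_{GP}(m,n)$. Writing $s:=m+n$, I would expand the $I$-formula as $(m+n)(m+n-1)^{2}(m+n-4)=s^{4}-6s^{3}+9s^{2}-4s$ and $2mn(2m+n-1)(m+2n-1)=4m^{3}n+10m^{2}n^{2}+4mn^{3}-6m^{2}n-6mn^{2}+2mn$, and then add $4(C+1)$ to get a symmetric degree-$4$ polynomial in $m,n$. Expanding the target of~\eqn{EqCGPmn} via $(m-1)^{2}(m-2)^{2}=m^{4}-6m^{3}+13m^{2}-12m+4$ (and its $n$-analogue), $8mn\bigl(m+n-\tfrac{3}{2}\bigr)^{2}=8m^{3}n+16m^{2}n^{2}+8mn^{3}-24m^{2}n-24mn^{2}+18mn$, and the residual $2\cdot\tfrac{9mn}{2}-4=9mn-4$, yields the same polynomial term by term; both equal $\tfrac{1}{4}$ times
\[
m^{4}+n^{4}+8m^{3}n+8mn^{3}+16m^{2}n^{2}-6m^{3}-6n^{3}-24m^{2}n-24mn^{2}+13m^{2}+13n^{2}+36mn-12m-12n+4.
\]

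The main obstacle---and the only nontrivial step if one follows Freeman's sweep directly, as the authors suggest---is recovering the additive constant $+1$ in $\sC_{GP}(m,n)=1+I+C$. In the sweep it must come from the initial cell swept at the very start (or, equivalently, from a careful accounting at the four corner vertices, where several downward edges meet and the naive ``number of higher labels on other sides'' rule is off by one at each corner). The Euler-formula detour above absorbs this bookkeeping for free, leaving only the routine polynomial matching described above.
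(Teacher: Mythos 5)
Your proof is correct, and it takes a genuinely different route from the paper. The paper obtains \eqn{EqCGPmn} by Freeman's counting-line sweep: the figure is tilted, a horizontal line descends, and cells are tallied node by node, with the boundary contribution computed from the label ordering (the details of the ``careful study of the tilted version'' are not written out). You instead go through Euler's formula: with $V=I+2(m+n)$ from Theorem~\ref{Th129.1}, and $E=C+2I+2(m+n)$ because each interior node is a simple crossing lying on exactly two chords, you get the clean identity $\sC_{GP}(m,n)=1+I+C$, where $C=\binom{2(m+n)}{2}-2\binom{m+1}{2}-2\binom{n+1}{2}=m^2+4mn+n^2-2m-2n$ counts the chords joining nodes on different sides. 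Your edge count is legitimate: no chord between nodes on different sides can pass through a third boundary node or overlap another such chord (a line meets the boundary of a convex region in at most two points unless it lies along a side), and the same-side ``chords'' collapse onto the $2(m+n)$ boundary segments exactly as you say. The identity checks numerically against Fig.~\ref{FigBCGP12} ($1+9+7=17$) and against $BC(1,1)$ ($1+1+2=4$), and your polynomial expansion of $4(1+I+C)$ agrees term by term with four times the right-hand side of \eqn{EqCGPmn}; note also that your formula for $C$ recovers the paper's remark that $\sC_{GP}-\sN_{GP}=C-2(m+n)+1=m^2+4mn+n^2-4m-4n+1$. What your approach buys is precisely what you identify at the end: the Euler-formula route makes the additive constant and the boundary contribution automatic, avoiding the delicate corner bookkeeping in the sweep, at the cost of having to justify the edge count (which you do). The paper's sweep, conversely, generalizes to situations where one wants to count cells without first knowing the full node count with multiplicities.
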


\vspace*{+.2in}
   \noindent{\bf Remark.}
Asymptotically, $\sC_{GP}(m,n)$ and $\sN_{GP}(m,n)$ behave 
in the same way.  In fact the difference $\sC_{GP}(m,n)~-~\sN_{GP}(m,n)$ is only
a quadratic function of $m$ and $n$,
${m}^{2}+4\,mn+{n}^{2}-4\,m-4\,n+1$.

%

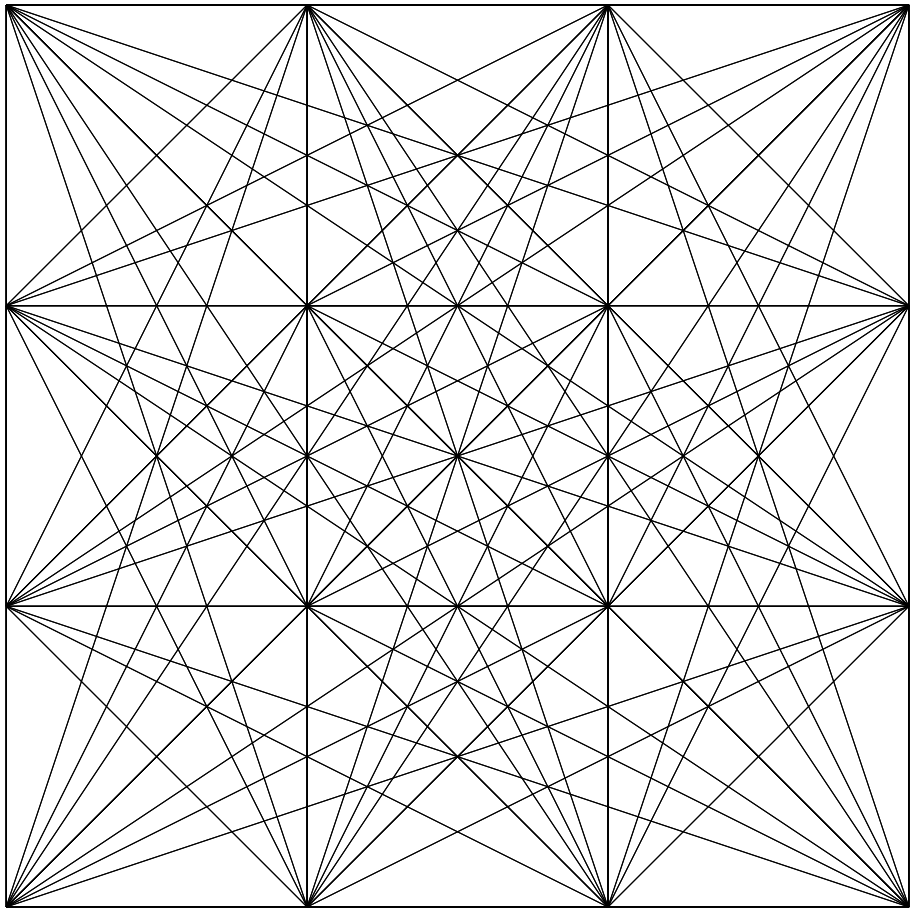
\begin{figure}[!ht]
\begin{center}
\begin{tikzpicture}[scale=4]
\foreach \i in {0,...,3}
{ \foreach \j in {0,...,3}
  { \foreach \k in {0,...,3} 
     { \foreach \m in {0,...,3}
  \draw (\i,\j) -- (\k,\m); }}}
\end{tikzpicture}
\caption{The graph $AC(3,3)$. There are $353$ nodes and $520$ cells.}
\label{FigAC33}
\end{center}
\end{figure}

\section{The graphs \texorpdfstring{$AC(m,n)$}{AC(m,n)}.}\label{SecAC}
The graph $AC(m,n)$ was defined in \textsection\ref{SecBCmn}.
We take an $(m+1) \times (n+1)$ square grid of nodes,
and draw a chord between \emph{every} pair of grid nodes. (If we only joined pairs
of boundary nodes we would get $BC(m,n)$.)

Figure~\ref{Fig23Cfn} shows $AC(3,2)$ (take the black and red lines only, not the blue lines).
Hugo Pfoertner has made black and white drawings of $AC(m,n)$
for $1 \le m, n \le 5$ in \seqnum{A288187}.
Figure~\ref{FigAC33} shows a black and white drawing of $AC(3,3)$ made using \emph{TikZ} 
\cite{Cre11, Tan10}.

The numbers of nodes $\sN_{AC}(m,n)$  and cells $\sC_{AC}(m,n)$ in $AC(m,n)$
are given for $m, n \le 9$ in \seqnum{A288180} and \seqnum{A288187}, respectively, and the initial terms are shown
in Table~\ref{TabAC1}.
The first row and column of Table~\ref{TabAC1} are the same as the first row and
 column of Table~\ref{TabBC1}
 but are included for completeness.

\begin{table}[!htb]
\caption{Numbers of nodes $\sN_{AC}(m,n)$ and cells $\sC_{AC}(m,n)$ in $AC(m,n)$ for $1 \le m, n \le 7$.}\label{TabAC1}
$$
\begin{array}{|c|c|c|c|c|c|c|c|}
\hline 
m \backslash \, n & 1 & 2 & 3 & 4 & 5 & 6 & 7  \\
\hline 
1 & 5, 4& 13, 16& 35, 46& 75, 104& 159, 214& 275, 380& 477, 648 \\
2 & 13, 16& 37, 56& 121, 176& 265, 388& 587, 822& 1019, 1452& 1797, 2516 \\
3 & 35, 46& 121, 176& 353, 520& 771, 1152& 1755, 2502& 3075, 4392& 5469, 7644 \\
4 & 75, 104& 265, 388& 771, 1152& 1761, 2584& 4039, 5700& 7035, 9944& 12495, 17380 \\
5 & 159, 214& 587, 822& 1755, 2502& 4039, 5700& 8917, 12368& 15419, 21504& 27229, 37572 \\
6 & 275, 380& 1019, 1452& 3075, 4392& 7035, 9944& 15419, 21504& 26773, 37400& 47685, 65810 \\
6 & 477, 648& 1797, 2516& 5469, 7644& 12495, 17380& 27229, 37572& 47685, 65810& 84497, 115532 \\
\hline
\end{array}
$$
\end{table}

It is clear (compare Figs.~\ref{FigBC33} and \ref{FigAC33}) that $AC(m,n)$ contains far more nodes and 
cells than $BC(m,n)$. We may obtain an upper bound on $\sN_{AC}(n,n)$ as follows.
The graph $AC(n,n)$ has $(n+1)^2$ grid points. The number of ways
of choosing four grid points is $\binom{(n+1)^2}{4}$, and except for a vanishingly small fraction of cases,
no three points will be collinear. There are then two possibilities: the four points may form a convex 
quadrilateral, or a triangle with the fourth point in its interior.
In the first case the intersection of the two diagonals of the quadrilaterals is a 
node of $AC(m,n)$ (which may or may not be a new node), but in the second case no new node is formed.

If four points in the plane are chosen at random from a square, by what is known as ``Sylvester's Theorem'', 
the probability that they form a convex quadrilateral is $25/36$ and the probability that
they form a triangle with an interior point is $11/36$ (see \cite[Table~4]{Pfi89},  \cite[Table~3, p.~114]{Sol78}
for the complicated history of this result).
Assuming that Sylvester's theorem applies to our problem,
we can conclude that the number of nodes in $AC(m,n)$ counted
with multiplicity is asymptotically 
\beql{EqACN1}
\frac{25}{36} \binom{(n+1)^2}{4} ~\sim~ \frac{1}{35.56}\,n^8\,.
\eeq

Both Tom Duff (personal communication) and Keith F. Lynch 
(personal communication) have have carried out extensive experiments, studying what happens when 
four points are chosen from  an $m \times n$ grid,
and have found that there is excellent agreement with the predictions of Sylvester's Theorem.

In a remarkable calculation, Tom Duff enumerated and classified all sets of four points
chosen from an $m \times n$ grid for $m, n \le 349$.
In a $349 \times 349$  grid, there are 
$6366733094048270910$ strictly convex
quadrilaterals out of $9170030499095875150$ total.
The fraction is 0.6942979, just a little short of Sylvester's $25/36 = 0.694444\ldots$.
The deficit is explained by the not quite negligible counts of quadrilaterals
with at least three collinear points.
If those are included 
 with the strictly convex quadrilaterals, the ratio is
0.6945982, slightly more than 25/36.
This is convincing evidence that Sylvester's theorem does apply to our problem.

In any case, $\sN_{AC}(n,n) = O(n^8)$, compared with $\sN(n,n) = O(n^4)$ for $BC(n,n)$.

%
%



\begin{figure}[!ht]
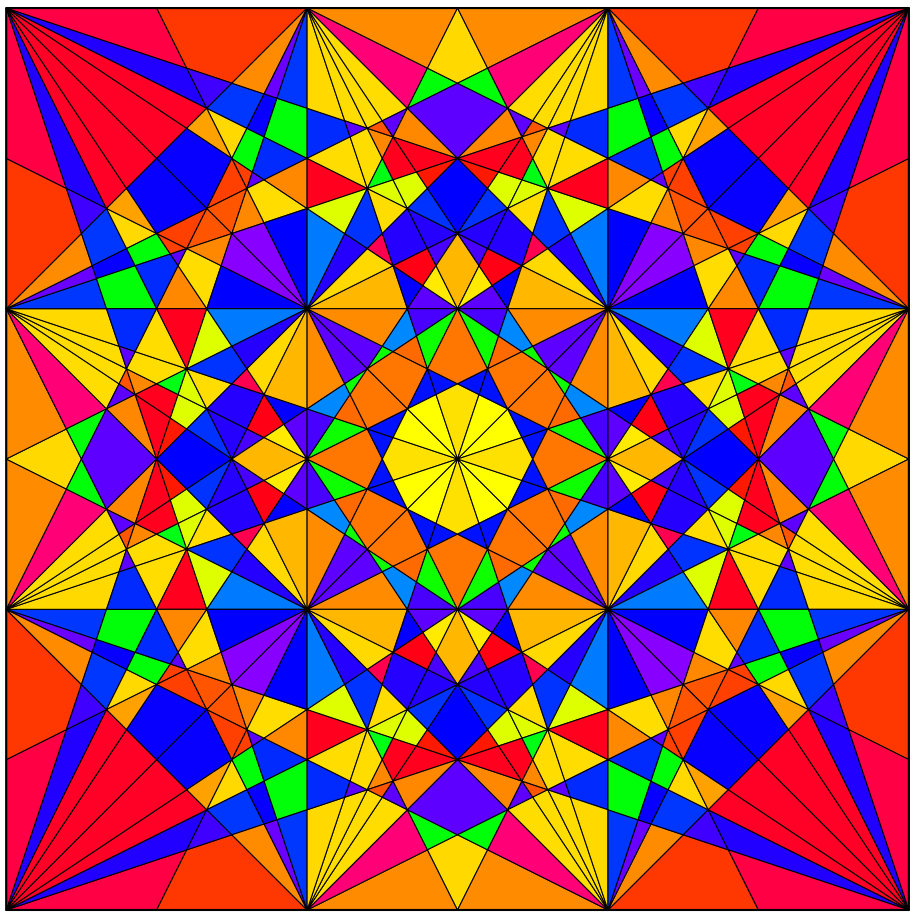

	\begin{center}

	\end{center}
	 \caption{The graph $LC(3,3)$. There are $405$ nodes and $624$ cells.}\label{FigLC33}
\end{figure}

\section{The graphs \texorpdfstring{$LC(m,n)$}{LC(m,n)}.}\label{SecLC}

The graph $LC(m,n)$ was defined in \textsection\ref{SecBCmn}.
We take an $(m+1) \times (n+1)$ square grid of nodes,
draw a chord between \emph{every} pair of grid nodes, and extend all the chords until they meet 
the boundary of the grid.
These graphs were discussed by Mustonen \cite{Mus08, Mus09, Mus10}.
Figure~\ref{Fig23Cfn} shows $LC(3,2)$ (take the black, red, and blue lines), and
Fig.~\ref{FigLC33} shows our stained glass coloring of $LC(3,3)$.

The numbers of nodes $\sN_{LC}(m,n)$  and cells $\sC_{LC}(m,n)$ in $LC(m,n)$
are given for $m, n \le 8$ in \seqnum{A333284} and \seqnum{A333282}, respectively, 
and the initial terms are shown
in Table~\ref{TabLC1}. 
Again the first row and column are the same as in Table~\ref{TabBC1}.
Mustonen~\cite[Table~3]{Mus09} gives the
first 29 terms of the diagonal sequence $\sN_{LC}(n,n)$ (\seqnum{A333285}).

\begin{table}[!htb]
\caption{Numbers of nodes $\sN_{LC}(m,n)$ and cells $\sC_{LC}(m,n)$ in $LC(m,n)$ for $1 \le m, n \le 7$.}\label{TabLC1}
$$
\begin{array}{|c|c|c|c|c|c|c|c|}
\hline 
m \backslash \, n & 1 & 2 & 3 & 4 & 5 & 6 & 7  \\
\hline 
1 & 5, 4& 13, 16& 35, 46& 75, 104& 159, 214& 275, 380& 477, 648 \\
2 & 13, 16& 37, 56& 129, 192& 289, 428& 663, 942& 1163, 1672& 2069, 2940 \\
3 & 35, 46& 129, 192& 405, 624& 933, 1416& 2155, 3178& 3793, 5612& 6771, 9926 \\
4 & 75, 104& 289, 428& 933, 1416& 2225, 3288& 5157, 7520& 9051, 13188& 16129, 23368 \\
5 & 159, 214& 663, 942& 2155, 3178& 5157, 7520& 11641, 16912& 20341, 29588& 36173, 52368 \\
6 & 275, 380& 1163, 1672& 3793, 5612& 9051, 13188& 20341, 29588& 35677, 51864& 63987, 92518 \\
7 & 477, 648& 2069, 2940& 6771, 9926& 16129, 23368& 36173, 52368& 63987, 92518& 114409, 164692 \\
\hline
\end{array}
$$
\end{table}

For this problem we  can give  only an upper bound on the
number of nodes counted with multiplicity.  The argument does, however, avoid the use of
Sylvester's Theorem. Consider four points chosen from the $(n+1) \times (n+1)$ 
grid points, with no three points collinear. If the points form a triangle with a point in 
the interior, joining the three vertices of the triangle to the interior point and 
then extending these chords until they meet the sides of the
triangle (something we were not allowed to do in the previous case) will produce three potentially new nodes.
If the four points form a convex quadrilateral, there are also potentially three nodes that 
could be created: the intersection of the two diagonals, 
and the two points where pairs of opposite sides meet when extended.
Figure~\ref{FigSylv} shows the two cases.  The black nodes are the four grid points
and the red nodes are the potential new nodes. Of course in the second case 
the two external red points may be outside the grid (or at infinity), and so would not be counted.

\begin{figure}[!ht]
\begin{center}
\begin{tikzpicture}[scale=1.0]
\coordinate (P0) at (0,0);
\coordinate (P1) at (2,0);
\coordinate (P2) at (1.7,1.7);
\coordinate (P3) at (1.233,0.567);
\draw[fill=black] (0,0) circle[radius = 0.1];
\draw[fill=black] (2,0) circle[radius = 0.1];
\draw[fill=black] (1.7,1.7) circle[radius = 0.1];
\draw[fill=black] (1.233,0.567) circle[radius = 0.1];
\draw (P0) -- (P1);
\draw (P1) -- (P2);
\draw (P0) -- (P2);
\draw (-0.4,-0.184) -- (2.3, 1.058);
\draw[fill=red] (1.848,0.850) circle[radius = 0.1];
\draw (0.766,-0.566) -- (1.84,2.04);
\draw[fill=red] (1,0.0) circle[radius = 0.1];
\draw (2.537,-0.397) -- (0.3126,1.247);
\draw[fill=red] (0.8495,0.8505) circle[radius = 0.1];

\def\s{4}
\coordinate (P0) at (\s+0,0);
\coordinate (P1) at (\s+2,0);
\coordinate (P2) at (\s+2,1);
\coordinate (P3) at (\s+1,2);
\coordinate (P4) at (\s+1.6,0.8);
\coordinate (P5) at (\s+3,0);
\coordinate (P6) at (\s+2,4);
\draw[fill=black] (P0) circle[radius = 0.1];
\draw[fill=black] (P1) circle[radius = 0.1];
\draw[fill=black] (P2) circle[radius = 0.1];
\draw[fill=black] (P3) circle[radius = 0.1];
\draw[fill=red] (P4) circle[radius = 0.1];
\draw[fill=red] (P5) circle[radius = 0.1];
\draw[fill=red] (P6) circle[radius = 0.1];
\draw (\s+-0.2,-0.4) -- (\s+2.2,4.4);
\draw (\s+-0.4,0) -- (\s+3.6,0);
\draw (\s+-.4,-.2) -- (\s+2.6,1.3);
\draw (\s+.6,2.4) -- (\s+3.4,-.4);
\draw (\s+.8,2.4) -- (\s+2.2,-.4);
\draw (\s+2,-.5) -- (\s+2,4.4);
\end{tikzpicture}
\end{center}
\caption{The two possibilities for choosing four noncollinear points from an $m \times n$ grid.}
\label{FigSylv}
\end{figure}
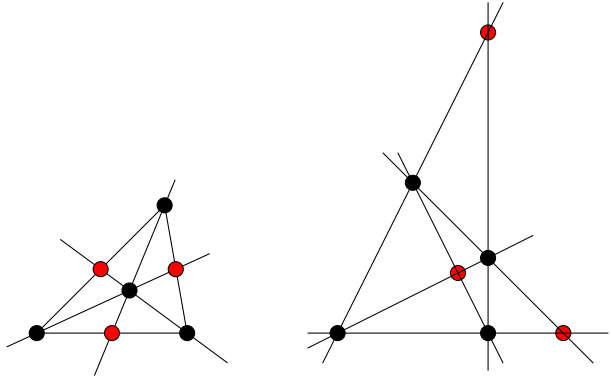

In any case, the maximum number of new nodes that are created is at most $3\,\binom{(n+1)^2}{4}
~\sim~ n^8/8$, and this is an upper bound on $\sN_{LC}(n,n)$.
This is an over-count, both because we do not always get three new nodes for each $4$-tuple of grid points, 
and because multiple intersection points are counted multiple times.
Based on his data for $n \le 29$, Mustonen~\cite{Mus09} 
makes an empirical estimate that 
$\sN_{LC}(n,n) ~\sim~ C n^8$, where $C$ is about~$0.0075$. So our constant, $1/8$ is, unsurprisingly, 
an over-estimate.

We conclude that as we progress from $BC(n,n)$ to $AC(n,n)$ to $LC(n,n)$,
the graphs become progressively more dense, and so counting the nodes with multiplicity 
gives a steadily weaker upper bound on their number.


\section{Choosing the colors.}\label{SecColor}
We used three different coloring schemes.

\subsection{Number-of-sides coloring.}\label{SubSecNOSC}
The simplest scheme colors the cells according to the number of sides, with randomly chosen colors.
This is used in Fig.~\ref{FigBC42} and in figures in \cite{OEIS} (entries \seqnum{A333282}, \seqnum{A335701}, for example)
when studying the distribution of cells according to number of sides.

\subsection{The yellow and red palettes.}\label{SubSecYR}

This is a  refinement of the previous scheme, which modifies the color according to the shape of the cell.
For Figs.~\ref{FigBC22}, \ref{FigBC12}, \ref{FigBC13},
\ref{FigBC14}, \ref{FigIT3},   \ref{FigIT4} the cells are either triangles
or quadrilaterals, and we use colors which darken as the cell becomes more irregular. 
More precisely, the cells are colored according to the following rule.
If the cell has $n$ sides (where $n$ is $3$ or $4$), let $\lambda$ be the area of
the cell divided by the area of an $n$-sided regular polygon with  the same circumradius.
Then the cell is assigned color number $\sqrt{\lambda}$ from the following  palettes:

 \definecolor{d3}{RGB}{150, 95, 0}
 \definecolor{e3}{RGB}{255, 255, 0}
 \definecolor{d4}{RGB}{100, 0, 0}
 \definecolor{e4}{RGB}{255, 0, 0}
 \definecolor{d5}{RGB}{30, 30, 100}
 \definecolor{e5}{RGB}{30, 30, 255}
\begin{center}
\begin{tabular}{cc}

triangles&
\begin{tikzpicture}
\node [rectangle, left color=d3, right color=e3, minimum width=70pt, minimum height=1.0cm] (box) at (0,0){};
\end{tikzpicture}
\\

quadrilaterals&
\begin{tikzpicture}
\node [rectangle, left color=d4, right color=e4, minimum width=70pt, minimum height=1.0cm] (box) at (0,0){};
\end{tikzpicture}
\\

\end{tabular}
\end{center}

\subsection{Random colorings.}\label{SubSecRan}
For Figs.~\ref{FigK23}, \ref{FigBC14}, \ref{FigBC33}, etc.
the color of a cell is assigned by first computing the average distance of the nodes
of the cell from the center of the picture.
These average  distances are then grouped into a certain number of bins (we used $1000$ bins),
and the nonempty bins are assigned a random color from the standard spectrum 
from red to violet.
This ensures a symmetrical coloring with contrasting colors for neighboring cells.
In practice we  do this several times and then choose the most appealing picture.
We also have the option of restricting the color palette to achieve certain effects (reds, blues, 
and greens for a cathedral-like window, or various shades of browns for
the frames that we will see in Part 2).

%

\section{Acknowledgments}
We thank Max Alekseyev, Gareth McCaughan, Ed Pegg, Jr.,  and Jinyuan Wang for their assistance during the course of this work.
Tom Duff and  Keith F. Lynch carried out extensive computations to verify the applicability
of Sylvester's theorem (see \textsection\ref{SecAC}).
We made frequent use of the \emph{gfun} Maple program \cite{GFUN}
and the \emph{TikZ} Latex package \cite{Cre11, Tan10}.


\bigskip
\hrule
\bigskip

\noindent 2010 Mathematics Subject Classification 05A16, 05C10, 05C30, 52B05

\begin{thebibliography}{99}

\bibitem{Alex10} 
M. A. Alekseyev,
 On the number of two-dimensional threshold functions,
 \emph{SIAM J. Discr. Math.}, \textbf{24:4} (2010), 1617--1631.
 
\bibitem{ABZ15} 
M. A. Alekseyev, M. Basova, and N. Yu. Zolotykh,
 On the minimal teaching sets of two-dimensional threshold functions,
 \emph{ SIAM J. Discr. Math.}, \textbf{29:1} (2015), 157--165.
 
 \bibitem{Rose2} 
 L. Blomberg, S. R. Shannon, and N. J. A. Sloane,
 Graphical enumeration and stained glass windows, 2: Polygons, frames, crosses, etc.,
 in preparation, 2020.

\bibitem{Bela} 
B. Bollob\'{a}s, 
\emph{Graph Theory: An Introductory Course}, Springer, 1979.

\bibitem{RLG1} 
F. Chung and R. Graham,
Primitive juggling sequences,
\emph{Amer. Math. Monthly}, \textbf{115:3} (2008), 185--194.

\bibitem{Cre11} 
J. Cr\'{e}mer,
A very minimal introduction to \emph{TikZ},
March 11, 2011; 
\url{https://cremeronline.com/LaTeX/minimaltikz.pdf}.

\bibitem{Fre75} 
J. W. Freeman,
The number of regions determined by a convex polygon,
\emph{Math. Mag.}, \textbf{49:1} (1975), 23--26.

\bibitem{Gri10} 
M. Griffiths, 
Counting the regions in a regular drawing of $K_{n,n}$, 
\emph{J. Integer Sequences}, \textbf{13} (2010), \#10.8.5.

\bibitem{Hall04} 
H. T. Hall, 
\emph{Counterexamples in Discrete Geometry},
PhD Dissertation, Mathematics Department, University of California Berkeley, 2004.

\bibitem{Harary} 
F. Harary,
\emph{Graph Theory}, Addison-Wesley, Reading MA, 1969.

\bibitem{Leg09} 
S. Legendre, 
The number of crossings in a regular drawing of the complete bipartite graph, 
\emph{J. Integer Sequences}, \textbf{12} (2009), \#09.5.5.

\bibitem{Mus08} 
S. Mustonen, 
Statistical accuracy of geometric constructions, September 2, 2008; 
\url{http://www.survo.fi/papers/GeomAccuracy.pdf}.

\bibitem{Mus09} 
S. Mustonen, 
On lines and their intersection points in a rectangular grid of points, April 16, 2009;
\url{http://www.survo.fi/papers/PointsInGrid.pdf}.

\bibitem{Mus10} 
S. Mustonen, 
On lines going through a given number of points in a rectangular grid of points, May 12, 2010;
\url{http://www.survo.fi/papers/LinesInGrid2.pdf}.

\bibitem{OEIS} 
The OEIS Foundation Inc., 
\emph{The On-Line Encyclopedia of Integer Sequences}, 2020;  
\url{https://oeis.org}.

\bibitem{PfZi04} 
M. E. Pfetsch and G. M. Ziegler, 
Large chambers in a lattice polygon,   December 13, 2004;
\url{http://www.mathematik.tu-darmstadt.de/~pfetsch/chambers}.

\bibitem{Pfi89} 
R. E. Pfiefer,
The historical development of J.~J.~Sylvester's Four Point Theorem,
\emph{Math. Mag.}, \textbf{62:5} (1989), 309--317.

\bibitem{PoRu98} 
B. Poonen and M. Rubinstein, 
The number of intersection points made by the diagonals of a regular polygon, 
\emph{SIAM J. Discr. Math.}, \textbf{11:1} (1998), 135--156.

\bibitem{GFUN} 
B. Salvy and P. Zimmermann,
GFUN: a Maple package for the manipulation of generating and holonomic functions in one variable,
\emph{ACM Trans.  Math. Software},
{\textbf 20} (1994), 163--177.

\bibitem{SZ98} 
V. N. Shevchenko and N. Yu. Zolotykh, 
On the complexity of deciphering the threshold
functions of $k$-valued logic, (Russian)
\emph{Dokl. Akad. Nauk}, \textbf{362:5} (1998), 606--608;
(English translation) \emph{Dokl. Math.}, \textbf{58} (1998), 268--270.

\bibitem{Slo10} 
N. J. A. Sloane,
The email servers and Superseeker, 2010;
\url{https://oeis.org/ol.html}.
 
\bibitem{Sol78} 
H. Solomon,
\emph{Geometric Probability}, SIAM, Philadelphia, 1978.

\bibitem{Som98} 
S. E. Sommars and T. Sommars,
Number of triangles formed by intersecting diagonals of a regular polygon,
\emph{J. Integer Sequences}, \textbf{1} (1998), \#98.1.5.

\bibitem{Tan10} 
T. Tantau,
\emph{The PGF/TikZ Programming Language},
Version 2.10, CTAN Org., October 25 2010.

\bibitem{Tut63} 
W. T. Tutte, 
A census of planar maps, 
\emph{Canad. J. Math.}, \textbf{15} (1963), 249--271.

\bibitem{Tut68} 
W. T. Tutte, 
On the enumeration  of planar maps, 
\emph{Bull. Amer. Math. Soc.}, \textbf{74} (1968), 64--74.

\bibitem{Zol01}  
N. Yu. Zolotykh, 
On the complexity of deciphering threshold functions in two variables, (Russian),
in \emph{Proc. 11th Internat. School Seminar ``Synthesis and complexity of control systems,'' Part I}, Center of Applied Research, Moscow State Univ. Faculty of Mechanics and Mathematics,
Moscow, Russia, 2001, pp. 74--79.

\end{thebibliography}
\end{document}